\tikzset{>=stealth',vertex/.style={circle,fill,inner sep=1.3pt}}
\definecolor{codegreen}{rgb}{0,0.8,0}
\newcommand{\myautoref}[2]{\hyperref[#2]{\autoref*{#1}\ref*{#2}}} % \myautoref{prop:1}{itm:a} creates a link [Proposition 1(a)].
\theoremstyle{plain}
\newtheorem{thm}{Theorem}[section]
\newtheorem{prop}[thm]{Proposition}
\newtheorem{lem}[thm]{Lemma}
\theoremstyle{definition}
\newtheorem{definition}[thm]{Definition}
\newtheorem*{note*}{Remark}
\newcommand{\C}{\mathbb{C}}
\newcommand{\Sbb}{\mathbb{S}}
\newcommand{\card}[1]{\lvert #1 \rvert}
\newcommand{\norm}[1]{\lVert #1 \rVert}
\DeclareMathOperator\Aut{Aut}
\DeclareMathOperator\Qut{Qut}
\DeclareMathOperator\QIso{QIso}
\DeclareMathOperator\id{id}
\DeclareFontFamily{U}{mathb}{\hyphenchar\font45}
\DeclareFontShape{U}{mathb}{m}{n}{
<5> <6> <7> <8> <9> <10> gen * mathb
<10.95> mathb10 <12> <14.4> <17.28> <20.74> <24.88> mathb12
}{}
\DeclareSymbolFont{mathb}{U}{mathb}{m}{n}
\DeclareMathSymbol{\bigastglyph}{2}{mathb}{"06}
\DeclareMathOperator*{\bigast}{\bigastglyph}
\def\oversortoftilde#1{\mathop{\vbox{\m@th\ialign{##\crcr\noalign{\kern3\p@}%
      \sortoftildefill\crcr\noalign{\kern3\p@\nointerlineskip}%
      $\hfil\displaystyle{#1}\hfil$\crcr}}}\limits}
\def\sortoftildefill{$\m@th \setbox\z@\hbox{$\braceld$}%
  \braceld\leaders\vrule \@height\ht\z@ \@depth\z@\hfill\braceru$}
\title{Quantum Sabidussi's Theorem}
\author{Arnbjörg Soffía Árnadóttir \and Josse van Dobben de Bruyn \and Prem Nigam Kar \and David E.\ Roberson \and Peter Zeman}
\address{Department of Applied Mathematics and Computer Science, Technical University of Denmark, 2800 Kongens Lyngby, Denmark.}
\address{\noindent e-mail: {\tt sofar@dtu.dk, jdob@dtu.dk, pkar@dtu.dk, dero@dtu.dk, pezem@dtu.dk}}
\address{QMATH, Department of Mathematical Sciences, University of Copenhagen, Universitetsparken 5, 2100 Copenhagen Ø, Denmark.}
\address{\noindent e-mail: {\tt dero@dtu.dk}}
\address{\normalfont{All authors are supported by Carlsberg Semper Ardens Accelerate CF21-0682 Quantum Graph Theory.}}
\subjclass[2020]{46L67 (Primary), 05C25, 05C76, 46L85 (Secondary)}
\keywords{Quantum automorphism group, lexicographic product, finite graph, Weisfeiler--Leman algorithm, quantum vertex transitive graph}
\begin{document}

\begin{abstract}
	Sabidussi's theorem~[Duke Math.~J. \textbf{28}, 1961] gives necessary and sufficient conditions under which the automorphism group of a lexicographic product of two graphs is a wreath product of the respective automorphism groups.
	We prove a quantum version of Sabidussi's theorem for finite graphs, with the automorphism groups replaced by quantum automorphism groups and the wreath product replaced by the free wreath product of quantum groups.
	This extends the result of Chassaniol~[J.~Algebra \textbf{456}, 2016], who proved it for regular graphs.
	Moreover, we apply our result to lexicographic products of quantum vertex transitive graphs, determining their quantum automorphism groups even when Sabidussi's conditions do not apply.
\end{abstract}

\maketitle

\section{Introduction}

Automorphism groups of graphs play an important role when studying the interplay between group theory and graph theory. A natural question that arises in this context concerns a connection between graph products and group products: When can the automorphism group of a product of graphs be expressed as some product of their respective automorphism groups?

In this paper, we study a quantum analogue of the automorphism group. The quantum automorphism group of a finite graph $X$, was defined in 2003 by Bichon \cite{Bichon-qut-2003} as a quotient of the quantum symmetric group, $\Sbb_n^+$, introduced by Wang \cite{Wang-Qsymmetry-1998}. We shall be working with the definition given by Banica in \cite{Banica-qut-2005} and we denote the quantum automorphism group of $X$ by $\Qut(X)$.
The theory of quantum automorphism groups of graphs is still a relatively young field and we refer to \cite{Schmidt-dissertation} for a survey of the state of the art. Our goal is to look at when we can factor the quantum automorphism group of a product of graphs into a product of quantum automorphism groups. This gives us a way to construct large quantum automorphism groups from smaller ones, as well as furthering our understanding of quantum automorphism groups of graphs in general.

We will focus on the \emph{lexicographic product} of graphs (sometimes called the \emph{composition} of graphs). Informally, we can think of the lexicographic product $X[Y]$ of graphs $X$ and $Y$ as follows: replace each vertex of $X$ with a copy of $Y$ and if there is an edge between two vertices in $X$, we add a complete bipartite graph between the two corresponding copies of $Y$. It is not too hard to see that the automorphism group, $\Aut(X[Y])$, of the lexicographic product contains the wreath product, $\Aut(Y)\wr\Aut(X).$ In two papers from 1959 and 1961 \cite{sabidussi1959, sabidussi1961}, Sabidussi gives necessary and sufficient conditions for equality to hold.

Quantum automorphism groups of lexicographic products of graphs have been studied on multiple occasions in the literature. Banica and Bichon studied the quantum automorphism group of the lexicographic products of graphs under some spectral conditions in \cite{banica_quantum_2007}. They also studied the quantum automorphism group of what they call the ``free product'' of graphs, which is a colouring of the lexicographic product in \cite{Banica-Bichon-free-product}. In 2016, Chassaniol proved a quantum version of Sabidussi's theorem for finite regular graphs \cite{chassaniol2016}. Specifically, he shows that under Sabidussi's conditions, the quantum automorphism group of the lexicographic product $X[Y]$ can be written as $\Qut(Y)\wr_*\Qut(X)$,
where the $\wr_*$ denotes the free wreath product, an analogue of the wreath product for quantum groups. Here, we generalize this result to all finite graphs, dropping the regularity condition. Our main theorem is the following.

\begin{thm}
	\label{main-result}
	Let $X$ and $Y$ be two graphs
	and $X[Y]$ be their lexicographic product. Then, $\Qut(X[Y]) = \Qut(Y) \wr_* \Qut(X)$ if and only if the following conditions hold:
	\begin{enumerate}[label = \roman*.]
		\item if $Y$ is not connected, then $X$ has no twins,
		\item if $\overline{Y}$ is not connected, then $\overline{X}$ has no twins.
	\end{enumerate}
\end{thm}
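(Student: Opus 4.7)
The proof has two directions. For the necessity of (i) and (ii), I would exhibit explicit classical automorphisms witnessing the failure. If $Y$ is disconnected with components $Y_1, \ldots, Y_k$ and $X$ has distinct twins $u, v$, then swapping the copies $\{u\} \times V(Y_1)$ and $\{v\} \times V(Y_1)$ while fixing everything else is an automorphism of $X[Y]$ that does not preserve the fiber partition $\{\{x\} \times V(Y) : x \in V(X)\}$, so it cannot lie in $\Qut(Y) \wr_* \Qut(X)$ (whose classical part is exactly $\Aut(Y) \wr \Aut(X)$). The failure of (ii) is reduced to the failure of (i) by passing to the complement, using $\overline{X[Y]} = \overline{X}\,[\,\overline{Y}\,]$ together with $\Qut(Z) = \Qut(\overline{Z})$.

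For the sufficiency direction, assuming (i) and (ii) hold, I would show that the universal magic unitary $u = (u_{(x,y),(x',y')})$ of $\Qut(X[Y])$ has the block structure of a free wreath product. Concretely, the entire argument reduces to the block identity
\[
\sum_{y' \in V(Y)} u_{(x,y),(x',y')} = \sum_{y' \in V(Y)} u_{(x,z),(x',y')} \quad \text{for all } x, x' \in V(X), \ y, z \in V(Y),
\]
together with its column-analogue. Once this holds, the common values $v_{xx'} := \sum_{y'} u_{(x,y),(x',y')}$ form a magic unitary commuting with $A_X$, the sub-blocks $(u_{(x,y),(x',y')})_{y,y'}$ decompose as $v_{xx'}$-multiples of magic unitaries commuting with $A_Y$, and the universal property of $\wr_*$ produces a morphism $\Qut(X[Y]) \to \Qut(Y) \wr_* \Qut(X)$ inverse to the canonical inclusion, exactly as in Chassaniol's treatment of the regular case.

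The crux, and the only place where conditions (i) and (ii) enter, is the block identity. My plan is to prove it by showing that the fiber partition is detectable in the coherent algebra generated by $A_{X[Y]} = A_X \otimes J + I \otimes A_Y$---equivalently, in the Weisfeiler--Leman partition of $V(X[Y])$---so that any magic unitary of $\Qut(X[Y])$ is forced to respect it. Combinatorially this splits into cases: when $Y$ is connected, same-fiber vertices are linked by intra-fiber paths visible to colour refinement; when $Y$ is disconnected, condition (i) makes $X$ twin-free and so distinct fibers of $X[Y]$ acquire distinct external neighbourhoods in the coloured graph; the dual case of $\overline{Y}$ disconnected is treated by complementation via (ii). The main technical challenge will be translating this classical colour-refinement argument into the required orthogonality relations on the entries of $u$, using only the intertwining relation $u A_{X[Y]} = A_{X[Y]} u$ and the magic unitary axioms; I expect this step to require a careful iterative analysis of products $u_{(a,b),(a',b')} u_{(c,d),(c',d')}$ of matrix entries, propagating the coherent-algebra refinement through the standard identities for magic unitaries.
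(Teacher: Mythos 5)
Your architecture matches the paper's: necessity via an explicit component-swapping automorphism (plus complementation for condition (ii)), and sufficiency by showing that every magic unitary of $\Qut(X[Y])$ must respect the partition into inner and outer (non-)edges, which is exactly what the paper establishes via Weisfeiler--Leman in \autoref{weis-lem-sabi}. The only difference in the endgame is cosmetic: you would redo Chassaniol's block-sum computation starting from the block identity, whereas the paper observes that respecting this partition means $\Qut(X[Y]) = \Qut(\widetilde{X}\ast\widetilde{Y})$ and quotes Banica--Bichon's \autoref{free-prod} on coloured free products (see \autoref{lem:wre-and-free}); both routes work, and your block identity does indeed follow from the fiber-preservation orthogonalities by a short magic-unitary computation.

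The gap is in the step you correctly identify as the crux, but it is not where you locate it. The passage from ``Weisfeiler--Leman distinguishes $(p,q)$ from $(p',q')$'' to $u_{pp'}u_{qq'}=0$, which you expect to require a careful iterative analysis of products of matrix entries, is a known one-liner: the stable WL partition always coarsens the quantum orbitals (\autoref{lem:weis-lem-orbital}). The hard part is the combinatorial claim itself, and your sketch of it would fail as stated. ``Same-fiber vertices are linked by intra-fiber paths visible to colour refinement'' does not separate fibers, because vertices in two distinct fibers $Y_x$ and $Y_{x'}$ are also joined by walks of every relevant length; counting walks of a given length is useless unless the walks carry colour information that only inner pairs can produce. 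The missing idea is the first-round triangle count (\autoref{edge-tri-count}, \autoref{non-edge-tri-count}): a divisibility-by-$\card{V(Y)}$ argument shows that an inner edge whose endpoints have a common non-neighbour in $Y$ (dually, an inner non-edge whose endpoints have a common neighbour) already receives, after one refinement, a colour that no outer edge (non-edge) can receive unless the corresponding vertices of $X$ are twins in $\overline{X}$ (in $X$). Only then does connectivity of $\overline{Y}$ (of $Y$) enter: one joins $p_y$ to $q_y$ by a shortest path, takes steps of size two so that every step lands in this distinguished inner colour class, and counts walks with that prescribed colour sequence (\autoref{lem:longer-paths}); no outer pair admits such a walk. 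Your ``distinct external neighbourhoods'' heuristic for the twin-free case must likewise be converted, via the same triangle counts, into a statement about the WL colouring of ordered pairs rather than about the fibers as vertex sets. Without this mechanism the block identity, and hence the whole sufficiency direction, remains unproven.
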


The paper is organized as follows.
In \autoref{sec:Weis-Lem} we introduce the Weisfeiler--Leman algorithm \cite{WL}, which will be our main tool. \autoref{sec:main} contains the proof of our main result as well as an alternative proof to Sabidussi's original theorem. In \autoref{sec:disjoint} we give a complete characterization of quantum automorphism groups of disjoint unions of graphs and in \autoref{sec:vxtransitive} we apply the results from the previous sections to lexicographic products of quantum vertex-transitive graphs.

\section{Preliminaries}
\label{sec:prelim}

Throughout this article, we assume that all graphs are finite and simple, but not necessarily connected.
For a positive integer $n$, write $[n] = \{1,\ldots,n\}$.

\subsection{Graphs and lexicographic products}
Let $X$ be a graph. We denote by $V(X), E(X), A_X$ and $\Aut(X)$ its vertex set, edge set, adjacency matrix and automorphism group, respectively.
The \emph{neighbourhood $N_X(x)$} of a vertex $x$ is the set of neighbours of $x$.
We say that two vertices, $x$ and $y$ in a graph $X$ are \emph{twins} if $N_X(x) = N_X(y)$.
Note that, by this definition, adjacent vertices can never be twins. We denote by $\overline{X}$ the \emph{complement} of $X$, that is, the graph with vertex set $V(X)$ and edge set $\{xy: xy\notin E(X), \ x \ne y\}.$

The \emph{lexicographic product} (or \emph{composition}) of graphs $X$ and $Y$, denoted by $X[Y]$, is the graph with vertex set $V(X)\times V(Y)$ and edge set
\[E(X[Y]) \coloneqq \{(x,y)(x',y'): xx'\in E(X)\}\cup\{(x,y)(x,y'): yy'\in E(Y)\}.\]
The lexicographic product of graphs is associative and preserves complements, that is $\overline{X[Y]} = \overline{X}[\overline{Y}].$ For a fixed $x_0 \in V(X)$, denote by $Y_{x_0}$ the induced subgraph on the vertex set $\{(x_0,y) : y\in V(Y)\}$.
Clearly, $Y_x$ is isomorphic to $Y$ for all $x\in V(X)$.
Edges of $X[Y]$ having both endpoints in the same $Y_x$ will be called \emph{inner edges}, and edges going between different $Y_x$ will be called \emph{outer edges}.

An example of a lexicographic product is given in \autoref{fig:lex_example}.
It shows the lexicographic product $C_4[K_2]$, where the inner edges are coloured blue and the outer edges are coloured orange.

\begin{figure}[t]
	\centering
	\begin{tikzpicture}[vertex/.style={circle,fill,inner sep=1.4pt},
	                    inner_edge/.style={SteelBlue1,line width=1.2pt},
	                    outer_edge/.style={Orange1,line width=.7pt},
	                    Y_part/.style={fill=gray!8}]
		\node[vertex] (v0_in) at (45:1cm) {};
		\node[vertex] (v1_in) at (135:1cm) {};
		\node[vertex] (v2_in) at (-135:1cm) {};
		\node[vertex] (v3_in) at (-45:1cm) {};
		\node[vertex] (v0_out) at (45:2cm) {};
		\node[vertex] (v1_out) at (135:2cm) {};
		\node[vertex] (v2_out) at (-135:2cm) {};
		\node[vertex] (v3_out) at (-45:2cm) {};
		\foreach \x in {0,...,3} {
			\pgfmathtruncatemacro{\xn}{mod(\x + 1, 4)}
			\draw[inner_edge] (v\x_in) -- (v\x_out);
			\draw[outer_edge] (v\x_in) -- (v\xn_in);
			\draw[outer_edge] (v\x_in) -- (v\xn_out);
			\draw[outer_edge] (v\x_out) -- (v\xn_in);
			\draw[outer_edge] (v\x_out) -- (v\xn_out);
		}
	\end{tikzpicture}
	\caption{The lexicographic product $C_4[K_2]$.}
	\label{fig:lex_example}
\end{figure}
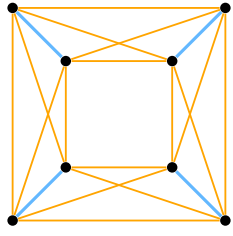

\subsection{The wreath product}
Let $H$ and $G$ be groups, let $\Omega$ be a set, and let $(g,\omega) \mapsto g\omega$ be a left action of $G$ on $\Omega$.
Write
\[ K \coloneqq \prod_{\omega \in \Omega}H_\omega, \]
where $H_\omega \cong H$ for all $\omega \in \Omega$. The \emph{wreath product} of $H$ by $G$, denoted by $H\wr G$, is the semidirect product of $K$ by $G$, where $G$ acts on $K$ by
\[ g\cdot (h_\omega)_{\omega\in\Omega} = (h_{g\omega})_{\omega\in\Omega},\qquad\text{ for $g\in G$ and $(h_\omega)_{\omega\in\Omega}\in \prod_{\omega\in\Omega}H_\omega$}. \]

\subsection{Universal \emph C*-algebras and free products}
We give a brief overview of universal $C^*$\nobreakdash-algebras and free products of $C^*$\nobreakdash-algebras in this subsection. Our treatment of this subject is mostly based on \cite[II.8.3]{blackadar}.

Let $\mathcal{G} = \{g_i\}_{i \in \Omega}$ be a set of generators and $\mathcal{R}$ a set of relations; that is, polynomials in $\{g_i\}_{i\in\Omega} \cup \{g_i^*\}_{i\in\Omega}$.
A \emph{representation} of $(\mathcal{G|R})$ is a set $\{T_i\}_{i \in \Omega}$ of bounded operators on a Hilbert space $\mathcal{H}$ satisfying the relations $\mathcal{R}$. Let $\mathcal{A}$ denote the free $*$\nobreakdash-algebra generated by $\mathcal{G}$ satisfying $\mathcal{R}$. Then, it is evident that a representation $\mathcal{(G|R)}$ defines a $*$\nobreakdash-representation of $\mathcal{A}$ on $\mathcal{H}$.

For $x \in \mathcal{A}$, we define $\norm{x} = \sup\{\norm{\pi(x)}: \pi \text{ is a representation of }\mathcal{(G|R)}\}$. If $\norm{x}$ is finite for all the generators in $\mathcal{G}$, then $\norm{\cdot}$ defines a $C^*$\nobreakdash-seminorm on $\mathcal{A}$. The completion of $\mathcal{A} / \ker(\norm{\cdot})$ with respect to $\norm{\cdot}$ is known as the \emph{universal $C^*$\nobreakdash-algebra generated by $\mathcal{G}$ subject to $\mathcal{R}$}, denoted $C^*(\mathcal{G|R})$. Every $C^*$\nobreakdash-algebra $\mathcal A$ can be expressed as a universal $C^*$\nobreakdash-algebra $C^*(\{x_a\}_{a \in A}| \mathcal{R_A})$, where:
\[ \mathcal{R_A} = \{ x_a + x_{b} = x_{a+b}, \ \lambda x_a = x_{\lambda a}, \ x_ax_{b} = x_{ab}, \ x_a^* = x_{a^*},\ \forall \ a,b \in \mathcal A, \lambda \in \C\}. \]

$C^*(\mathcal{G|R})$ satisfies the following universal property: if $\{x_i\}_{i \in \Omega}$ are elements satisfying $\mathcal R$ in a $C^*$\nobreakdash-algebra $\mathcal B$, then there is a unique $*$\nobreakdash-homomorphism $\pi: C^*(\mathcal{G|R}) \to \mathcal{B}$ such that $\pi(g_i) = x_i$ for all $i\in \Omega$.

The \emph{free product of $C^*$\nobreakdash-algebras} is the coproduct in the category of (not necessarily unital) $C^*$\nobreakdash-algebras.
In other words, if $\{\mathcal A_i\}_{i\in\Omega}$ is a collection of $C^*$\nobreakdash-algebras, then their free product is the $C^*$\nobreakdash-algebra $\bigast_{j \in \Omega} \mathcal{A}_j$ equipped with $*$\nobreakdash-homomorphisms $\varphi_i : \mathcal A_i \to \bigast_{j \in \Omega} \mathcal{A}_j$ such that the following universal property is satisfied:
For every $C^*$\nobreakdash-algebra $\mathcal B$ with $*$\nobreakdash-homomorphisms $\pi_i : \mathcal A_i \to \mathcal B$, there is a unique $*$\nobreakdash-homomorphism $\pi : \bigast_{j \in \Omega} \mathcal{A}_j \to \mathcal B$ such that $\pi_i = \pi \circ \varphi_i$ for all $i\in\Omega$; that is, the following diagram commutes:
\[ \begin{tikzcd}
	\mathcal{A}_i \arrow{r}{\varphi_i} \arrow[swap]{dr}{\pi_i} & \bigast_{i \in \Omega} \mathcal{A}_i \arrow[d, densely dashed, "\pi"] \\
	& \mathcal{B}
\end{tikzcd} \]
The free product of an arbitrary collection of $C^*$\nobreakdash-algebras always exists.
One way to see this is to construct it as follows: choose presentations $\mathcal A_i = C^*(\mathcal G_i | \mathcal R_i)$, and let $\bigast_{j \in \Omega} \mathcal A_j$ be the universal $C^*$\nobreakdash-algebra $C^*(\sqcup_{i \in \Omega} \mathcal {G}_i| \sqcup_{i \in \Omega} \mathcal{R}_i)$.
It follows easily from the universal property of universal $C^*$\nobreakdash-algebras that this satisfies the universal property of the coproduct.

If $\{\mathcal A_i\}_{i\in\Omega}$ is a collection of unital $C^*$\nobreakdash-algebras, then their coproduct in the category of unital $C^*$\nobreakdash-algebras (with unital $*$\nobreakdash-homomorphisms) also exists.
It is called the \emph{free product amalgamated over $\C$}, denoted $\bigast_{j\in\Omega}^{\C} \mathcal A_j$, and can be constructed as $\bigast_{j\in\Omega}^{\C} \mathcal A_j = \bigast_{j\in\Omega} \mathcal A_j / \langle 1_{\mathcal A_i} = 1_{\mathcal A_j} \mid i,j\in\Omega \rangle$.

The free product of two $C^*$\nobreakdash-algebras $\mathcal A$ and $\mathcal B$ is denoted $\mathcal A * \mathcal B$, and the free product amalgamated over $\C$ of two unital $C^*$\nobreakdash-algebras $\mathcal A$ and $\mathcal B$ is denoted $\mathcal A *_{\C} \mathcal B$.

The \emph{unitization} of a $C^*$\nobreakdash-algebra $\mathcal{A}$ is the $C^*$\nobreakdash-algebra
\[ \widetilde{\mathcal{A}} \coloneq \mathcal{A} \ast \mathbb{C}/ \left \langle 1 a = a 1 = a \mid a \in \mathcal{A}\right \rangle. \]
Note that if $\mathcal{A}$ is a unital $C^*$-algebra, then one has $\widetilde{\mathcal{A}} \cong \mathcal{A}$.

\subsection{Quantum automorphism groups of graphs}
\begin{definition}
	A \emph{compact quantum group} $\mathbb{G}$ is an ordered pair $(\mathcal{A},\Delta)$, where $\mathcal{A}$ is a unital $C^*$\nobreakdash-algebra and $\Delta: \mathcal{A} \to \mathcal{A} \otimes \mathcal{A}$ is a unital $*$\nobreakdash-homomorphism, known as the \emph{comultiplication} of $\mathbb{G}$, satisfying the following conditions:
	\begin{enumerate}
		\item \emph{co-associativity:} $(\Delta \otimes id) \Delta = (id \otimes \Delta) \Delta$;
		\item \emph{cancellation property:} the sets $\Delta(\mathcal{A})(1 \otimes \mathcal{A}) $ and $\Delta(\mathcal{A})(\mathcal{A} \otimes 1)$ are dense in $\mathcal{A} \otimes \mathcal{A}$.
	\end{enumerate}
\end{definition}

Let $G$ be a compact group and let $C(G)$ be the commutative $C^*$\nobreakdash-algebra of continuous complex-valued functions on $G$. Since $C(G) \otimes C(G) \cong C(G \times G)$, we may define $\Delta: C(G) \to C(G)\otimes C(G)$ by $\Delta(f)(g \times h) = f(gh)$. Then, $(C(G), \Delta)$ is a compact quantum group. Moreover, all compact quantum groups $\mathbb{G}$ where the $C^*$\nobreakdash-algebra is commutative are of this type. In view of this example, we shall denote the $C^*$\nobreakdash-algebra $\mathcal{A}$ associated with a compact quantum group $\mathbb{G} = (\mathcal{A},\Delta)$ as $C(\mathbb{G})$.

As an example, consider the \emph{quantum symmetric group} $\mathbb{S}_n^+$. Let $C(\mathbb{S}_n^+)$ denote the universal $C^*$\nobreakdash-algebra generated by $[u_{ij}]_{i,j \in [n]}$ satisfying:
\begin{align}
	\label{magic-unitary-cond}
	\begin{aligned}
		&u_{ij}^2 = u_{ij}^* = u_{ij} &&\quad\text{for all $i,j \in [n]$, and} \\[.75ex]
		&\sum_{k=1}^n u_{ik} = \sum_{k=1}^n u_{kj} = 1 &&\quad\text{for all $i,j \in [n]$}, \\
	\end{aligned}
\end{align}
and $\Delta: C(\mathbb{G}) \to C(\mathbb{G}) \otimes C(\mathbb{G})$ is the $\ast$-homomorphism defined by
\begin{equation*}
	\Delta(u_{ij}) = \sum_{k=1}^n u_{ik} \otimes u_{kj} \qquad\text{for all $i,j \in [n]$}.
\end{equation*}
A matrix satisfying \eqref{magic-unitary-cond} is known as a \emph{magic unitary}.

A compact quantum group $\mathbb{G} = (C(\mathbb{G}), \Delta)$ such that $C(\mathbb{G})$ is generated by a magic unitary $[e_{ij}]_{i,j \in [n]}$ and $\Delta$ satisfies $\Delta(e_{ij}) = \sum_{k=1}^n e_{ik} \otimes e_{kj}$ for all $i,j \in [n]$ is known as a \emph{quantum permutation group} acting on $[n]$. Let $G \subseteq S_n$ be a permutation group, and let $u_{ij}: G \to \mathbb{C}$ be the functions taking each $g \in G$ to the $(i,j)$-th component of the permutation matrix $g$. Then, $(C(G), [u_{ij}]_{i,j \in [n]})$ is a quantum permutation group. Indeed, one can show that all quantum permutation groups $\mathbb{G}$ where the $C^*$\nobreakdash-algebra $C(\mathbb{G})$ is commutative are of this form. Given a permutation group $G$, we denote the corresponding quantum permutation group as $G_q$.

Let $\mathbb{G} = (C(\mathbb{G}), u)$ and $\mathbb{H} = (C(\mathbb{H}), v)$ be two quantum permutation groups acting on $[n]$. We say $\mathbb{H}$ is a \emph{\textup(quantum\textup) subgroup} of $\mathbb{G}$, written $\mathbb{H}\subseteq \mathbb{G}$, if $u_{ij} \mapsto v_{ij}$ defines a surjective $*$\nobreakdash-homomorphism $C(\mathbb{G}) \to C(\mathbb{H})$. It is now immediately obvious that all quantum permutation groups acting on $[n]$ are subgroups of $\mathbb{S}_n^+$.

\begin{definition}
	The \emph{quantum automorphism group $\Qut(X)$} of a (possibly coloured) graph $X$ is the quantum permutation group $(C(\Qut(X)), u)$, where $C(\Qut(X))$ is the universal $C^*$\nobreakdash-algebra with generators $[v_{ij}]_{i,j \in V(X)}$ and relations
	\begin{align}
		\label{eq:qaut_def}
		u_{ij}u_{k\ell} = 0 \text{ if } ik\in E(X), j\ell\notin E(X) \text{ or vice versa},
	\end{align}
	and
	\begin{align}
		\label{eq:cqaut_def}
		u_{ij} = 0 \text{ if } c(i) \neq c(j)
	\end{align}
	if $X$ is coloured.
	The first condition is equivalent to $A_X u = u A_X $.
\end{definition}

We now introduce two binary operations on quantum permutation groups that we need.

\begin{definition}
	Let $\mathbb{G} = (C(\mathbb{G}), u)$ and $\mathbb{H} = (C(\mathbb{H}), v)$ be quantum permutation groups. Then, their \emph{free product} $\mathbb{G} \ast \mathbb{H}$ is defined as the quantum permutation group $(C(\mathbb{G}) \ast_{\mathbb{C}} C(\mathbb{H}), u \oplus v)$.
\end{definition}

\begin{definition}[{\cite{Bichon-free-wreath-product}}]
	\label{def:free-wreath-product}
	Let $\mathbb{G}=(C(\mathbb{G}), [u_{ij}]_{i,j \in [m]})$ and $\mathbb{H}=(C(\mathbb{H}), [v_{ab}]_{a,b \in [n]})$ be two quantum permutation groups. The \emph{free wreath product of $\mathbb{G}$ and $\mathbb{H}$}, denoted by $\mathbb{G} \wr_* \mathbb{H}$, is the quantum permutation group
	\[ C(\mathbb{G} \wr_* \mathbb{H}) \coloneqq (C(\mathbb{G}^{*n}) \ast_\mathbb{C} C(\mathbb{H}))/ \langle [u_{ij}^{(a)}, v_{ab}] = 0 \mid i,j \in [m], \ a,b \in [n] \rangle , \]
	where $u^{(a)}$ denotes the $a$-th diagonal block of the magic unitary $u^{(1)} \oplus \cdots \oplus u^{(n)}$ of $\mathbb{G}^{*n}$, with fundamental representation $[w_{(a,i)(b,j)}]_{(a,i),(b,j) \in [n] \times [m]}$ (of $\mathbb{G} \wr_* \mathbb{H}$) given by
	\[ w_{(a,i)(b,j)} \coloneqq u_{ij}^{(a)}v_{ab}. \]
\end{definition}
We can view the free wreath product as a quantum analogue of the wreath product of groups. Indeed, if $G, H \leq S_n$ are two permutation groups, then one can show that $G_q \wr_* G_q = (G \wr H)_q$.

Let $G$ be a group acting on a set $X$. Recall that elements $x,y \in X$ are said to be in the same \emph{orbit} of $G$ if there is a $g \in G$ such that $gx = y$. This defines an equivalence relation on $X$. Similarly, $(x,y)$ and $(x',y')$ are said to be in the same \emph{orbital} of $G$ if there exists $g \in G$ such that $gx = x'$ and $gy = y'$.

Similarly, given a quantum permutation group $\mathbb{G} = (C(\mathbb{G}), u)$ we may define a relation $\sim_1$ on $[n]$ by letting $i \sim_1 j$ if and only if $u_{ij} \neq 0$, and a relation $\sim_2$ on $[n] \times [n]$ by letting $(i,j) \sim_2 (k,\ell)$ if and only if $u_{ik}u_{j\ell} \neq 0$. It was shown in \cite{lupini-2020} that both $\sim_1$ and $\sim_2$ are equivalence relations. The equivalence classes of $[n]$ with respect to $\sim_1$ are known as \emph{orbits} of $\mathbb{G}$ and the equivalence classes of $[n] \times [n]$ with respect to $\sim_2$ are known as the \emph{orbitals} of $\mathbb{G}.$

Another closely related topic to quantum automorphism groups of graphs is quantum isomorphisms of graphs. The original formulation is based on the existence of a perfect quantum strategy for the graph isomorphism game introduced in \cite{atserias_quantum_2019}. We shall be working with an equivalent definition introduced in \cite{lupini-2020}. We shall not dive too deep into quantum isomorphisms of graphs and only introduce some definitions and results we need. We refer the reader to \cite{atserias_quantum_2019, lupini-2020} for a better understanding of these topics.

Two graphs $X$ and $Y$ are said to be \emph{quantum isomorphic}, written as $X \cong_q Y$, if there is a nonzero unital $C^*$\nobreakdash-algebra $\mathcal{A}$ and a magic unitary $u = [u_{xy}]_{x \in V(X), y \in V(Y)}$ with entries from $\mathcal{A}$ such that $A_Xu = uA_Y$.

In view of this definition, for two graphs $X$ and $Y$ with $\card{V(X)} = \card{V(Y)}$, the \emph{$C^*$\nobreakdash-algebra of the $X,Y$-isomorphism game} (as defined in \cite{paulsen_bisynchronous_2021}), denoted by $\QIso(X,Y)$ is defined as the universal $C^*$\nobreakdash-algebra generated by $[e_{xy}]_{x \in V(X), y \in V(Y)}$ satisfying:
\begin{align*}
	&e_{xy}^2 = e_{xy}^* = e_{xy} &&\quad \text{ for all } x \in V(X), y \in V(Y) \\[.75ex]
	&\sum_{y \in V(Y)}e_{xy} = 1 &&\quad \text{ for all } x \in V(X) \\
	&\sum_{x \in V(X)}e_{xy} = 1 &&\quad \text{ for all } y \in V(Y) \\[.5ex]
	&e_{xy}e_{x'y'} = 0 &&\quad \text{ if } xx' \in E(X) \text{ and } yy' \notin E(Y) \text{ or vice versa.}
\end{align*}
Given that $[e_{xy}]_{x \in V(X), y \in V(Y)}$ is a magic unitary, the last condition is equivalent to $A_X[e_{xy}]_{x \in V(X), y \in V(Y)} = [e_{xy}]_{x \in V(X), y \in V(Y)}A_Y$.

It is now not too difficult to see that for every graph $X$, one has $\QIso(X,X) = C(\Qut(X))$. In \cite{brannan_bigalois_2020}, it was proved that $X \cong_{q} Y$ if and only if $\QIso{(X,Y)}$ is not trivial. In fact, it follows from the results of \cite{brannan_bigalois_2020} that the non-triviality of the free $*$\nobreakdash-algebra generated by $[e_{xy}]_{x \in V(X), y \in V(Y)}$ satisfying above conditions is necessary and sufficient for these purposes. Hence, to show that $X \cong_{q} Y$, it suffices to show that $e_{x,y} \neq 0$ for some $x \in V(X), y \in V(Y)$.

\section{Distinguishing inner and outer edges}
\label{sec:Weis-Lem}

The main combinatorial tool in our proof is the Weisfeiler--Leman algorithm. In this section, we introduce the Weisfeiler--Leman algorithm and show that it colours inner edges (resp. inner non-edges) and outer edges (resp. outer non-edges) of the lexicographic product $X[Y]$ in different colours, provided that $X$ and $Y$ meet the conditions of Sabidussi's theorem.

\subsection{The Weisfeiler--Leman algorithm.}
We start by describing the classical (2\nobreakdash-dimensional) Weisfeiler--Leman algorithm.
For our purposes, a \emph{colouring} of a graph $X$ is any function from $V(X) \times V(X)$ to some finite set $C$.

Given a graph $X$, the Weisfeiler--Leman algorithm \cite{WL} constructs a colouring $\overline{c} : V(X) \times V(X) \to C$ in the following way.
First, construct the initial colouring
\[ c_0(x, y) \coloneqq
	\begin{cases}
		0 \quad \text{if $x = y$,}\\
		1 \quad \text{if $xy \in E(X)$,}\\
		2 \quad \text{if $xy \in E(\overline{X})$.}
	\end{cases} \]
Then, we repeat the following procedure.
Given a colouring $c : V(X) \times V(X) \to C$, for every pair of colours $i,j \in C$ we define
\[ \Delta_{ij}(x,y) \coloneqq |\{z \in V(X) : c(x, z) = i \text{ and } c(z, y) = j\}|. \]
Then the \emph{refinement} of $c$ is the colouring $c'$ given by
\[ c'(x,y) \coloneqq (c(x,y), (\Delta_{ij}(x,y))_{i,j\in C}). \]
By repeatedly refining the initial colouring $c_0$, after a finite number of steps we end up with a colouring $\overline{c}$ that is \emph{stable} in the sense that $\overline{c}$ and $\overline{c}'$ induce the same partition of $V(X) \times V(X)$.
Once we have reached this stable colouring, the algorithm returns this colouring and terminates.
The stable colouring $\overline{c}$ returned by the Weisfeiler--Leman algorithm will be called the \emph{stable colouring of $X$}.

\begin{definition}
	Let $\overline{c}$ be the stable colouring of $X$.
	\begin{enumerate}[label=(\alph*)]
		\item For vertex pairs $(x,y),(x',y') \in V(X)$, we say that \emph{$(x,y)$ and $(x',y')$ are distinguished by Weisfeiler--Leman} if and only if $\overline{c}(x,y) \neq \overline{c}(x',y')$.
		\item We say that two edges $pq,p'q' \in E(X)$ or non-edges $pq,p'q' \in E(\overline{X})$ \emph{are distinguished by Weisfeiler--Leman} if and only if
		\[ \{ \overline{c}(p,q) , \overline{c}(q,p) \} \neq \{\overline{c}(p',q') , \overline{c}(q',p') \}, \]
		or in other words, if there is no orientation of these two edges such that both the forward and backward colours match.
		\item We say that two edges $pq,p'q' \in E(X)$ or non-edges $pq,p'q' \in E(\overline{X})$ \emph{are strongly distinguished by Weisfeiler--Leman}\footnote{Non-standard terminology introduced by the authors.} if and only if
		\[ \{ \overline{c}(p,q) , \overline{c}(q,p) \} \cap \{\overline{c}(p',q') , \overline{c}(q',p') \} = \varnothing, \]
		or in other words, if for all possible orientations of these two edges, the colours differ.
	\end{enumerate}
\end{definition}

In~\cite{lupini-2020}, it was shown that the partition of $V(X) \times V(X)$ found by the 2-dimensional Weisfeiler--Leman algorithm is always a (possibly trivial) coarse-graining of the orbitals of $\Qut(X)$. This is formalised in the following result:

\begin{lem}[\cite{lupini-2020}]
	\label{lem:weis-lem-orbital}
	If $(x,y), (x',y') \in V(X) \times V(X)$ are distinguished by the Weisfeiler--Leman algorithm, then $u_{xx'}u_{yy'} = 0$.
\end{lem}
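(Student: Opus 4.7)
The plan is to induct on the number $k$ of refinement rounds of the Weisfeiler--Leman algorithm, proving the stronger statement that whenever $c_k(x,y) \neq c_k(x',y')$ we already have $u_{xx'}u_{yy'} = 0$; since the colouring stabilises to $\overline{c}$ after finitely many rounds, this yields the lemma. For the base case $k = 0$, the colouring $c_0$ distinguishes two pairs only if exactly one is diagonal, or if their adjacency status in $X$ differs. The first case is handled by the magic unitary orthogonality $u_{xx'}u_{xy'} = 0$ for $x' \neq y'$, and the second case is precisely the defining relation of $\Qut(X)$ from condition $A_X u = u A_X$, which asserts $u_{xx'}u_{yy'} = 0$ whenever $xy$ and $x'y'$ disagree in adjacency.

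For the inductive step, suppose the claim holds at level $k-1$ and that $c_k(x,y) \neq c_k(x',y')$. If already $c_{k-1}(x,y) \neq c_{k-1}(x',y')$ we are done by induction, so assume the two pairs agree at level $k-1$. By the definition of the refinement, there exist $c_{k-1}$-colours $i,j$ such that the counts
\[ \alpha \coloneqq |\{z : c_{k-1}(x,z) = i,\ c_{k-1}(z,y) = j\}| \ \text{ and }\ \beta \coloneqq |\{w : c_{k-1}(x',w) = i,\ c_{k-1}(w,y') = j\}| \]
differ. The key manoeuvre is to insert the magic unitary resolution of identity $1 = \sum_w u_{zw}$ (valid for any fixed $z$) between $u_{xx'}$ and $u_{yy'}$, giving
\[ u_{xx'}u_{yy'} \;=\; \sum_w u_{xx'}u_{zw}u_{yy'}. \]
By the induction hypothesis, $u_{xx'}u_{zw} = 0$ unless $c_{k-1}(x,z) = c_{k-1}(x',w)$, and $u_{zw}u_{yy'} = 0$ unless $c_{k-1}(z,y) = c_{k-1}(w,y')$. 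Applying this identity for each of the $\alpha$ choices of $z$ satisfying $c_{k-1}(x,z) = i$ and $c_{k-1}(z,y) = j$, and summing, yields
\[ \alpha \cdot u_{xx'}u_{yy'} \;=\; \sum_{z,w} u_{xx'}u_{zw}u_{yy'}, \]
where the double sum ranges over both constrained index sets. A symmetric calculation, starting instead from $1 = \sum_z u_{zw}$ and summing over the $\beta$ valid choices of $w$, produces the identical double sum on the right but with coefficient $\beta$ on the left. Equating the two expressions and dividing by the nonzero integer $\alpha - \beta$ gives $u_{xx'}u_{yy'} = 0$.

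The main obstacle is the non-commutative bookkeeping in the inductive step: the induction hypothesis must be applied to the mixed products $u_{xx'}u_{zw}$ and $u_{zw}u_{yy'}$, so one has to track the $c_{k-1}$-colours of all four intermediate pairs $(x,z)$, $(x',w)$, $(z,y)$, $(w,y')$ simultaneously to see that only the ``matched'' values of $w$ contribute. Once this is set up, the magic unitary identity acts as a combinatorial indicator, cleanly converting the refinement counts $\alpha$ and $\beta$ into scalar coefficients of the target product $u_{xx'}u_{yy'}$, from which the conclusion is immediate.
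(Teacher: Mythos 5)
Your proof is correct. Note that the paper itself does not prove this lemma --- it is imported from the cited reference --- so there is no in-paper argument to compare against. The reference establishes the result by showing that the span of the orbital projections of $\Qut(X)$ forms a coherent algebra containing $A_X$ and the identity, and hence contains the coherent algebra that 2-dimensional Weisfeiler--Leman computes; your induction on refinement rounds unrolls exactly the same mechanism in elementary terms. The two computations at the heart of your inductive step check out: for fixed $z$ with $c_{k-1}(x,z)=i$ and $c_{k-1}(z,y)=j$, inserting $1=\sum_w u_{zw}$ and killing the terms with $c_{k-1}(x',w)\neq i$ (via $u_{xx'}u_{zw}=0$) or $c_{k-1}(w,y')\neq j$ (via $u_{zw}u_{yy'}=0$) leaves exactly the constrained double sum, and the symmetric computation with the column relation $1=\sum_z u_{zw}$ produces the same double sum with coefficient $\beta$, so $(\alpha-\beta)\,u_{xx'}u_{yy'}=0$ forces $u_{xx'}u_{yy'}=0$. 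The base case is also complete, since the row-orthogonality of a magic unitary handles the diagonal-versus-off-diagonal comparison and relation \eqref{eq:qaut_def} handles the edge-versus-non-edge comparison. The only point worth making explicit in a final write-up is that the refinement $c_k$ retains $c_{k-1}$ as its first component, so the two cases you split into (colours already differ at level $k-1$, or some count $\Delta_{ij}$ differs) are genuinely exhaustive; you do address this.
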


\begin{lem}
	\label{lem:longer-paths}
	Let $c : V(X) \times V(X) \to C$ be the \textup(intermediate\textup) colouring after some number of iterations of the Weisfeiler--Leman algorithm.
	Let $(x,x'),(y,y') \in V(X)^2$ be two vertex pairs and let $\ell \geq 1$ and $\vec{b} \in C^\ell$ be such that the sets
	\begin{align*}
		\{(x_0,\ldots,x_\ell) \in V(X)^{\ell+1} &\mid x_0 = x\ \text{and }\ x_\ell = x'\ \text{and }\ c(x_{i-1},x_i) = b_i\ \text{for all}\ i \in [\ell]\}, \\
		\{(y_0,\ldots,y_\ell) \in V(X)^{\ell+1} &\mid y_0 = y\ \text{and }\ y_\ell = y'\ \text{and }\ c(y_{i-1},y_i) = b_i\ \text{for all}\ i \in [\ell]\}
	\end{align*}
	have different sizes. Then $(x,x')$ and $(y,y')$ are distinguished by Weisfeiler--Leman.
\end{lem}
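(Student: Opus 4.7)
My plan is to prove the contrapositive by induction on the path length $\ell$: assuming $\overline{c}(x,x') = \overline{c}(y,y')$, I will show that for every $\ell \geq 1$ and every colour sequence $\vec{b} \in C^\ell$, the two sets in the lemma statement have the same cardinality. This implication immediately yields the stated lemma.

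For the base case $\ell = 1$, note that $\overline{c}$ is obtained from $c$ by further refinement iterations, so $\overline{c}$ refines $c$. Hence $\overline{c}(x,x') = \overline{c}(y,y')$ forces $c(x,x') = c(y,y')$, and the number of length-$1$ walks with colour sequence $(b_1)$ is $1$ or $0$ according to whether this common value equals $b_1$ or not. For the inductive step $\ell \geq 2$, write $\vec{b}' = (b_1,\ldots,b_{\ell-1})$ and let $f_{\vec{b}}(x,x')$ denote the number of $\vec{b}$-coloured walks from $x$ to $x'$. Splitting each walk at its penultimate vertex $z$ gives
\[ f_{\vec{b}}(x, x') = \sum_{z \in V(X)} [c(z, x') = b_\ell] \cdot f_{\vec{b}'}(x, z). \]
By the inductive hypothesis applied to $\vec{b}'$, the quantity $f_{\vec{b}'}(x, z)$ depends only on $\overline{c}(x, z)$, so I can write it as $g(\overline{c}(x, z))$ for some function $g$. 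Grouping the sum by $d = \overline{c}(x, z)$ yields
\[ f_{\vec{b}}(x, x') = \sum_{d \in \overline{C}} g(d) \cdot \bigl|\{z \in V(X) : \overline{c}(x, z) = d \text{ and } c(z, x') = b_\ell\}\bigr|. \]
Since $c$ is coarser than $\overline{c}$, the condition $c(z, x') = b_\ell$ is equivalent to $\overline{c}(z, x') \in D$, where $D \subseteq \overline{C}$ is the fixed set of refined colours that project onto $b_\ell$. Consequently each cardinality above equals $\sum_{e \in D} \Delta^{\overline{c}}_{de}(x, x')$.

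The argument closes by invoking stability of $\overline{c}$: if $\overline{c}(x,x') = \overline{c}(y,y')$, then $\Delta^{\overline{c}}_{de}(x,x') = \Delta^{\overline{c}}_{de}(y,y')$ for all $d,e \in \overline{C}$, because otherwise one further refinement step would strictly split the colour class of $(x,x')$, contradicting the definition of $\overline{c}$ as stable. Hence the two walk counts agree, completing the induction. I expect the main obstacle to be the careful bookkeeping when translating between the coarse colouring $c$ (which defines the colour sequence $\vec{b}$) and the stable colouring $\overline{c}$ (in which the stability argument applies); everything else is the standard observation that the $2$-dimensional Weisfeiler--Leman partition captures arbitrary coloured-walk counts.
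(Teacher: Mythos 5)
Your proof is correct: the contrapositive induction on $\ell$, reducing the length-$\ell$ walk count to length-$(\ell-1)$ counts via the penultimate vertex and then invoking stability of $\overline{c}$ to equate the $\Delta_{de}$ counts, is exactly the ``straightforward proof by induction'' that the paper states and omits. The one point worth making explicit in a write-up is the observation you already use implicitly in the base case, namely that the partition induced by $\overline{c}$ refines the one induced by the intermediate colouring $c$, so that $c$-colour conditions translate into unions of $\overline{c}$-classes.
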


In other words, instead of comparing $(x,x')$ and $(y,y')$ based on the number of walks $x \rightsquigarrow x'$ (resp.{} $y \rightsquigarrow y'$) of length $2$ with specified colour profiles (as in the algorithm), we can also compare based on the number of walks of arbitrary length with specified colour profiles.
Furthermore, this comparison can be made at any intermediate step of the algorithm.
The straightforward proof of \autoref{lem:longer-paths} (by induction) is omitted.

\subsection{Weisfeiler--Leman on lexicographic products}

We proceed to execute the Weisfeiler--Leman algorithm on the lexicographic product $X[Y]$.
We show that, if $X$ and $Y$ satisfy Sabidussi's conditions (i.e.{} $Y$ is connected or $X$ has no twins, and $\overline{Y}$ is connected or $\overline{X}$ has no twins), then Weisfeiler--Leman distinguishes inner (non-)edges from outer (non-)edges in $X[Y]$.

The first iteration of the Weisfeiler--Leman algorithm already gets us a long way towards our goal, as we shall see.
For this first step, we count the paths of certain colour types explicitly, leading to the counts in \autoref{table-1}.

\begin{prop}
	\label{prop:first-count}
	Apply the Weisfeiler--Leman algorithm to $X[Y]$.
	Then, in the first iteration, for every ordered pair of vertices $(p,q) \in V(X[Y])^2$ with $pq \in E(X[Y])$, the numbers $\Delta_{ij}(p,q)$ for $i,j \in \{1,2\}$ are as depicted in \autoref{table-1}, where $n$ denotes the number of vertices of $Y$.
\end{prop}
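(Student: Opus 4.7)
The plan is a direct enumeration. Write $p = (a,b)$ and $q = (c,d)$; since $pq \in E(X[Y])$, we are in one of two cases: the \emph{inner edge} case with $a = c$ and $bd \in E(Y)$, or the \emph{outer edge} case with $ac \in E(X)$. In either case I would compute the four numbers $\Delta_{ij}(p,q)$, $i,j \in \{1,2\}$, by partitioning the candidate vertices $z = (x,y) \in V(X[Y]) \setminus \{p,q\}$ according to the value of $x \in V(X)$, and summing the contributions to each $\Delta_{ij}$ from the blocks of the partition.

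The structural observation driving the computation is that, by the definition of the lexicographic product, the adjacency of $z$ to $p$ depends only on $y$ when $x = a$ (via $Y$-adjacency to $b$) and depends only on $x$ otherwise (via $X$-adjacency to $a$); the same holds for $q$ with $c$ and $d$ in place of $a$ and $b$. In the inner edge case the partition has just two blocks, namely $Y_a$ and its complement. The contribution from $z \in Y_a$ follows by a two-variable inclusion--exclusion on the pair of subsets $N_Y(b), N_Y(d) \subseteq V(Y)$. The contribution from $z$ outside $Y_a$ is degenerate: adjacency of $z$ to $p$ and to $q$ coincide (both hold iff $xa \in E(X)$), so $\Delta_{12}$ and $\Delta_{21}$ pick up $0$ from this block, while $\Delta_{11}$ and $\Delta_{22}$ pick up $\deg_X(a) \cdot n$ and $(\card{V(X)} - 1 - \deg_X(a)) \cdot n$ respectively.

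In the outer edge case the partition has three blocks: $Y_a$, $Y_c$, and the union of the remaining $Y_x$. For $z \in Y_a \setminus \{p\}$ the hypothesis $ac \in E(X)$ forces $zq \in E(X[Y])$ automatically, so only $\Delta_{11}$ and $\Delta_{21}$ receive contributions from this block, determined by whether $y \in N_Y(b)$; the block $Y_c \setminus \{q\}$ is symmetric and feeds $\Delta_{11}$ and $\Delta_{12}$. For the remaining $z$, adjacency to $p$ and to $q$ are independent of $y$, so a two-variable inclusion--exclusion on the pair $N_X(a), N_X(c) \subseteq V(X)$, multiplied by the factor $n$ accounting for the free choice of $y$, yields the contribution to all four $\Delta_{ij}$'s.

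The only real obstacle is careful bookkeeping. One must track the diagonal restriction $z \notin \{p,q\}$ together with the ``boundary'' memberships $b \in N_Y(d)$ and $d \in N_Y(b)$ (in the inner case) and $a \in N_X(c)$ and $c \in N_X(a)$ (in the outer case); each of these forces a $\pm 1$ correction to the naive inclusion--exclusion counts. Once this accounting is in place, each entry of \autoref{table-1} is obtained by summing the block contributions, and no further argument is needed.
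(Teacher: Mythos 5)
Your proposal is correct and follows essentially the same route as the paper: partition the third vertex $r$ according to which copy of $Y$ it lies in (two blocks for an inner edge, three for an outer edge) and count each block's contribution to the four $\Delta_{ij}$, with the terms divisible by $n$ coming from the blocks outside $Y_{p_x}$ and $Y_{q_x}$. The paper states this case decomposition and leaves the detailed verification to the reader, so your write-up, which carries out the bookkeeping (and correctly notes that the degenerate cases $z \in \{p,q\}$ are excluded automatically by the colour conditions and the loop-free neighbourhoods), simply fills in those details.
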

\begin{table}[h]
	\centering
	\def\myscale{0.28}
	\caption{Triangle counts in the first iteration of Weisfeiler--Leman on $X[Y]$, where $n$ denotes the number of vertices of $Y$. In the second column, green and red arrows denote edges and non-edges of $X[Y]$, respectively. The counts $\Delta_{0j}(p,q)$ and $\Delta_{i0}(p,q)$, corresponding to degenerate triangles ($ppq$ or $pqq$), are omitted from the table.}
	\footnotesize
	\hspace*{-.3cm}
	\begin{tabular}{cccc}
		\toprule
		& & Case I: & Case II: \\
		value & \hspace*{-1cm} illustration \hspace*{-1cm} & $pq$ is an inner edge; & $pq$ is an outer edge; \\
		& & $p = (x,p_y)$, $q = (x,q_y)$ & $p = (p_x',p_y')$, $q = (q_x',q_y')$ \\
		\midrule
		$\Delta_{11}(p,q)$ & \begin{tikzpicture}[baseline = 5, everp_y node/.style={draw}, scale = \myscale]
			\useasboundingbox (-1.5,-1.2) rectangle (1.5,3);
			\draw node[circle, fill, scale = 0.3, label = below:$p$] at (-1,0) (b) {};
			\draw node[circle, fill, scale = 0.3, label= below:$q$] at (1,0) (c) {};
			\draw node[circle, fill, scale = 0.3, label = above:$r$] at (0,1.732) (a) {};
			\draw[->, line width = 0.6pt, codegreen] (b) edge (c);
			\draw[->, line width = 0.6pt, codegreen] (b) edge (a);
			\draw[->, line width = 0.6pt, codegreen] (a) edge (c);
		\end{tikzpicture}
		& $\lvert N_{Y}(p_y) \cap N_{Y}(q_y)\rvert + \lvert N_{X}(x)\rvert n $ & $\lvert N_{Y}(p_y')\rvert + \lvert N_{Y}(q_y')\rvert + \lvert N_{X}(p_x') \cap N_{X}(q_x') \rvert n$ \\
		$\Delta_{12}(p,q)$ & \begin{tikzpicture}[baseline = 5, everp_y node/.style={draw}, scale = \myscale]
			\useasboundingbox (-1.5,-1.2) rectangle (1.5,3);
			\draw node[circle, fill, scale = 0.3, label = below:$p$] at (-1,0) (b) {};
			\draw node[circle, fill, scale = 0.3, label = below:$q$] at (1,0) (c) {};
			\draw node[circle, fill, scale = 0.3, label = above:$r$] at (0,1.732) (a) {};
			\draw[->, line width = 0.6pt, codegreen] (b) edge (c);
			\draw[->, line width = 0.6pt, codegreen] (b) edge (a);
			\draw[->, line width = 0.6pt, red] (a) edge (c);
		\end{tikzpicture}
		& $\lvert N_{Y}(p_y) \cap N_{\overline{Y}}(q_y)\rvert$ & $\lvert N_{\overline{Y}}(q_y')\rvert + \lvert N_{X}(p_x') \cap N_{\overline{X}}(q_x')\rvert n$ \\
		$\Delta_{21}(p,q)$ & \begin{tikzpicture}[baseline = 5, everp_y node/.style={draw}, scale = \myscale]
			\useasboundingbox (-1.5,-1.2) rectangle (1.5,3);
			\draw node[circle, fill, scale = 0.3, label = below:$p$] at (-1,0) (b) {};
			\draw node[circle, fill, scale = 0.3, label = below:$q$] at (1,0) (c) {};
			\draw node[circle, fill, scale = 0.3, label = above:$r$] at (0,1.732) (a) {};
			\draw[->, line width = 0.6pt, codegreen] (b) edge (c);
			\draw[->, line width = 0.6pt, red] (b) edge (a);
			\draw[->, line width = 0.6pt, codegreen] (a) edge (c);
		\end{tikzpicture}
		& $\lvert N_{\overline{Y}}(p_y) \cap N_{Y}(q_y)\rvert$ & $\lvert N_{\overline{Y}}(p_y')\rvert + \lvert N_{\overline{X}}(p_x') \cap N_{X}(q_x')\rvert n$ \\
		$\Delta_{22}(p,q)$ & \begin{tikzpicture}[baseline = 5, everp_y node/.style={draw}, scale = \myscale]
			\useasboundingbox (-1.5,-1.2) rectangle (1.5,3);
			\draw node[circle, fill, scale = 0.3, label = below:$p$] at (-1,0) (b) {};
			\draw node[circle, fill, scale = 0.3, label = below:$q$] at (1,0) (c) {};
			\draw node[circle, fill, scale = 0.3, label = above:$r$] at (0,1.732) (a) {};
			\draw[->, line width = 0.6pt, codegreen] (b) edge (c);
			\draw[->, line width = 0.6pt, red] (b) edge (a);
			\draw[->, line width = 0.6pt, red
			] (a) edge (c);
		\end{tikzpicture}
		& $\lvert N_{\overline{Y}}(p_y) \cap N_{\overline{Y}}(q_y)\rvert + \lvert N_{\overline{X}}(x)\rvert n$ & $\lvert N_{\overline{X}}(p_x') \cap N_{\overline{X}}(q_x')\rvert n$ \\
		\bottomrule
	\end{tabular}
	\label{table-1}
\end{table}
\begin{proof}
	If $pq$ is an inner edge, then there are two types of proper triangles: either the third vertex $r$ belongs to the same copy of $Y$ that contains both $p$ and $q$, or $r$ belongs to a different copy of $Y$.
	\begin{center}
		\def\noderadius{.3}
		\def\Yfactorradius{.7}
		\def\labeldist{-1.5}
		\begin{tikzpicture}[scale=.9]
			\begin{scope}
				\draw (0,0) circle (\Yfactorradius);
				\draw (-120:\noderadius) node[vertex] (p1) {} ++(180:6pt) node {$p$};
				\draw (120:\noderadius) node[vertex] (q1) {} ++(180:6pt) node {$q$};
				\draw (0:\noderadius) node[vertex] (r1) {} ++(0:6pt) node {$r$};
				\draw[->] (p1) -- (q1);
				\draw[->] (p1) -- (r1);
				\draw[->] (r1) -- (q1);
				\node[anchor=base] at (0,\labeldist) {Case 1A: $r_x = x$.};
			\end{scope}
			\begin{scope}[xshift=4.5cm]
				\draw (0,0) circle (\Yfactorradius);
				\draw (2,0) circle (\Yfactorradius);
				\draw (0,-\noderadius) node[vertex] (p2) {} ++(180:6pt) node {$p$};
				\draw (0,\noderadius) node[vertex] (q2) {} ++(180:6pt) node {$q$};
				\draw (2,0) node[vertex] (r2) {} ++(0:6pt) node {$r$};
				\draw[->] (p2) -- (q2);
				\draw[->] (p2) -- (r2);
				\draw[->] (r2) -- (q2);
				\node[anchor=base] at (1,\labeldist) {Case 1B: $r_x \neq x$.};
			\end{scope}
		\end{tikzpicture}
	\end{center}
	If $pq$ is an outer edge, then there are three types of proper triangles: either the third vertex $r = (r_x',r_y')$ belongs to the same copy of $Y$ that contains $p$ (that is, $r_x' = p_x'$), or $r$ belongs to the same copy of $Y$ that contains $q$ (that is, $r_x' = q_x'$), or $r$ belongs to yet another, disjoint copy of $Y$ (that is, $r_x' \neq p_x',q_x'$).
	\begin{center}
		\def\noderadius{.3}
		\def\Yfactorradius{.7}
		\def\labeldist{-1.5}
		\begin{tikzpicture}[scale=.9]
			\begin{scope}[xshift=-.5cm]
				\draw (0,0) circle (\Yfactorradius);
				\draw (0,2) circle (\Yfactorradius);
				\draw (-\noderadius,0) node[vertex] (p1) {} ++(-120:7pt) node {$p$};
				\draw (0,2) node[vertex] (q1) {} ++(90:7pt) node {$q$};
				\draw (\noderadius,0) node[vertex] (r1) {} ++(-60:7pt) node {$r$};
				\draw[->] (p1) -- (q1);
				\draw[->] (p1) -- (r1);
				\draw[->] (r1) -- (q1);
				\node[anchor=base] at (0,\labeldist) {Case 2A: $r_x' = p_x'$.};
			\end{scope}
			\begin{scope}[xshift=3.5cm]
				\draw (0,0) circle (\Yfactorradius);
				\draw (0,2) circle (\Yfactorradius);
				\draw (0,0) node[vertex] (p2) {} ++(-90:7pt) node {$p$};
				\draw (-\noderadius,2) node[vertex] (q2) {} ++(120:7pt) node {$q$};
				\draw (\noderadius,2) node[vertex] (r2) {} ++(60:7pt) node {$r$};
				\draw[->] (p2) -- (q2);
				\draw[->] (p2) -- (r2);
				\draw[->] (r2) -- (q2);
				\node[anchor=base] at (0,\labeldist) {Case 2B: $r_x' = q_x'$.};
			\end{scope}
			\begin{scope}[xshift=7cm]
				\draw (0,0) circle (\Yfactorradius);
				\draw (0,2) circle (\Yfactorradius);
				\draw (30:2cm) circle (\Yfactorradius);
				\draw (0,0) node[vertex] (p3) {} ++(180:7pt) node {$p$};
				\draw (0,2) node[vertex] (q3) {} ++(180:7pt) node {$q$};
				\draw (30:2cm) node[vertex] (r3) {} ++(0:7pt) node {$r$};
				\draw[->] (p3) -- (q3);
				\draw[->] (p3) -- (r3);
				\draw[->] (r3) -- (q3);
				\node[anchor=base] at (.866,\labeldist) {Case 2C: $r_x \neq p_x,q_x$.};
			\end{scope}
		\end{tikzpicture}
	\end{center}
	\autoref{table-1} is obtained by counting the number of triangles for each of these types.
	The terms divisible by $n$ arise from triangle types where $r_x \neq p_x,q_x$ (that is, Case 1B and Case 2C), and the terms that evaluate to $0$ (and are therefore ``missing'' from the table) arise from triangle types involving both an edge and a non-edge going between the same two copies of $Y$, which does not occur in the lexicographic product (this can happen in Case 1B, Case 2A and Case 2B).
	Detailed verification of the counts in \autoref{table-1} is left to the reader.
\end{proof}

\begin{prop}
	\label{edge-tri-count}
	Apply the Weisfeiler--Leman algorithm to $X[Y]$.
	If the first iteration of the algorithm does not strongly distinguish an inner edge $pq = (x,p_y)(x,q_y)$ from an outer edge $p'q' = (p_x',p_y')(q_x',q_y')$, then:
	\begin{enumerate}[label = \roman*.]
		\item $p_x'$ and $q_x'$ are twins in $\overline{X}$, and
		\item $N_{\overline{Y}}(p_y) \cap N_{\overline{Y}}(q_y) = \varnothing$.
	\end{enumerate}
\end{prop}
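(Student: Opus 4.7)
The plan is to exploit the arithmetic structure of the entries in \autoref{table-1}: the crucial feature is that, in Case~I (inner edge), certain counts are bounded above by $n-2$ (they are sizes of subsets of $V(Y) \setminus \{p_y,q_y\}$), whereas the corresponding counts in Case~II (outer edge) are sums of a bounded term and a multiple of $n$. Matching inner-edge counts to outer-edge counts will then force these multiples of $n$ to vanish, from which both conclusions drop out.

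\textbf{Step 1 (reduction by symmetry).} Since the initial colouring $c_0$ is symmetric, $\Delta_{ij}(y,x) = \Delta_{ji}(x,y)$ for all $i,j$. Hence the assumption that $pq$ and $p'q'$ are not strongly distinguished after the first iteration gives that, up to swapping the indices $\{1,2\}$ in either the $(p,q)$ tuple or the $(p',q')$ tuple, we have a componentwise equality $\Delta_{ij}(p,q) = \Delta_{\sigma(i)\sigma(j)}(p',q')$ for all $i,j$, with $\sigma$ the identity or the transposition on $\{1,2\}$. In particular, regardless of $\sigma$, the diagonal equalities
\[ \Delta_{11}(p,q) = \Delta_{11}(p',q') \quad\text{and}\quad \Delta_{22}(p,q) = \Delta_{22}(p',q') \]
always hold, and the multiset $\{\Delta_{12}(p,q),\Delta_{21}(p,q)\}$ always equals $\{\Delta_{12}(p',q'),\Delta_{21}(p',q')\}$.

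\textbf{Step 2 (condition ii).} Read off $\Delta_{22}$ from \autoref{table-1}:
\[ |N_{\overline{Y}}(p_y) \cap N_{\overline{Y}}(q_y)| + |N_{\overline{X}}(x)|\,n \;=\; |N_{\overline{X}}(p_x') \cap N_{\overline{X}}(q_x')|\,n. \]
Since $p_y \neq q_y$ are adjacent in $Y$, the intersection $N_{\overline Y}(p_y)\cap N_{\overline Y}(q_y)$ is contained in $V(Y)\setminus\{p_y,q_y\}$ and so has size at most $n-2<n$. Reducing modulo $n$ forces $|N_{\overline{Y}}(p_y) \cap N_{\overline{Y}}(q_y)| = 0$, which is exactly condition (ii).

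\textbf{Step 3 (condition i).} Apply the same modular argument to $\Delta_{12}$ and $\Delta_{21}$. In Case~I, $\Delta_{12}(p,q) = |N_Y(p_y)\cap N_{\overline Y}(q_y)| \le n-2$ and likewise $\Delta_{21}(p,q)\le n-2$, while in Case~II each of $\Delta_{12}(p',q')$ and $\Delta_{21}(p',q')$ equals (a term bounded by $n-1$) plus a multiple of $n$. In either case of Step~1, matching forces both coefficients of $n$ to vanish:
\[ |N_X(p_x') \cap N_{\overline{X}}(q_x')| = 0 \quad\text{and}\quad |N_{\overline{X}}(p_x') \cap N_X(q_x')| = 0. \]
Equivalently, no vertex is an $X$-neighbour of exactly one of $p_x',q_x'$, i.e.\ $N_X(p_x')\setminus\{q_x'\} = N_X(q_x')\setminus\{p_x'\}$.

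\textbf{Step 4 (conclusion).} Because $p'q'$ is an outer edge we have $p_x'q_x' \in E(X)$, and so adding $q_x'$ (resp.\ $p_x'$) to both sides of the equality in Step~3 yields $N_X(p_x')\cup\{p_x'\} = N_X(q_x')\cup\{q_x'\}$. Taking complements in $V(X)$ gives $N_{\overline X}(p_x') = N_{\overline X}(q_x')$, which is condition (i).

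The only mildly delicate point is Step~1, namely keeping track of which pairings of orientations are permitted by the definition of "not strongly distinguished"; once one observes that both diagonal equations and the unordered pair equality on the off-diagonal always survive the case analysis, the rest is a routine size-versus-multiple-of-$n$ argument.
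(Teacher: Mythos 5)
Your proof is correct and follows essentially the same route as the paper's: read the triangle counts off Table \ref{table-1}, observe that the inner-edge counts $\Delta_{12},\Delta_{21},\Delta_{22}$ are at most $n-2$ while the outer-edge counts are a bounded term plus a multiple of $n$, and conclude that those multiples of $n$ must vanish. The only differences are cosmetic: you track the orientation/transposition bookkeeping in Step 1 more explicitly than the paper (which simply relabels $p\leftrightarrow q$ and $p'\leftrightarrow q'$ to assume $c_1(p,q)=c_1(p',q')$), and you spell out the final deduction that the two vanishing intersections together with $p_x'q_x'\in E(X)$ give equal closed neighbourhoods, hence twins in $\overline{X}$, which the paper states in one line.
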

\begin{proof}
	Let $c_1 : V(X[Y])^2 \to C$ denote the colouring function after the first iteration of Weisfeiler--Leman.
	Since $pq$ and $p'q'$ are not strongly distinguished by $c_1$, we may assume that $c_1(p,q) = c_1(p',q')$ (interchange $p \leftrightarrow q$ and/or $p' \leftrightarrow q'$ if necessary.)
	Then all triangle counts for the ordered pairs $(p,q)$ and $(p',q')$ are equal (cf.{} \autoref{table-1}).
	In particular, looking at the second row of \autoref{table-1}, we have
	\[ \lvert N_{Y}(p_y) \cap N_{\overline{Y}}(q_y)\rvert = \Delta_{12}(p,q) = \Delta_{12}(p',q') = \lvert N_{\overline{Y}}(q_y')\rvert + \lvert N_{X}(p_x') \cap N_{\overline{X}}(q_x')\rvert n, \]
	where $n = \card{V(Y)}$.
	Since $N_{Y}(p_y) \cap N_{\overline{Y}}(q_y) \subseteq V(Y) \setminus \{p_y,q_y\}$, we have $\Delta_{12}(p,q) \leq n - 2 < n$, so it follows that $\card{N_{X}(p_x') \cap N_{\overline{X}}(q_x')} = 0$.
	Similarly, looking at the third row of \autoref{table-1}, we find $\card{N_{\overline{X}}(p_x') \cap N_{X}(q_x')} = 0$.
	Since $p_x'q_x' \notin E(\overline{X})$, it follows that $p_x'$ and $q_x'$ are twins in $\overline{X}$.
	Moreover, looking at the last row of \autoref{table-1}, we have
	\[ \lvert N_{\overline{Y}}(p_y) \cap N_{\overline{Y}}(q_y)\rvert + \lvert N_{\overline{X}}(x)\rvert n = \Delta_{22}(p,q) = \Delta_{22}(p',q') = \lvert N_{\overline{X}}(p_x') \cap N_{\overline{X}}(q_x')\rvert n. \]
	Since $\card{N_{\overline{Y}}(p_y) \cap N_{\overline{Y}}(q_y)} \leq n - 2 < n$ and all other summands are divisible by $n$, we must have $\card{N_{\overline{Y}}(p_y) \cap N_{\overline{Y}}(q_y)} = 0$.
\end{proof}

By symmetry (replacing $X$ and $Y$ by $\overline{X}$ and $\overline{Y}$), we also get the following complementary result for non-edges:

\begin{prop}
	\label{non-edge-tri-count}
	Apply the Weisfeiler--Leman algorithm to $X[Y]$.
	If the first iteration of the algorithm does not strongly distinguish an inner non-edge $pq = (x,p_y)(x,q_y)$ from an outer non-edge $p'q' = (p_x',p_y')(q_x',q_y')$, then:
	\begin{enumerate}[label = \roman*.]
		\item $p_x'$ and $q_x'$ are twins in $X$, and
		\item $N_{Y}(p_y) \cap N_{Y}(q_y) = \varnothing$.\qed
	\end{enumerate}
\end{prop}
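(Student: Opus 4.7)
The plan is to obtain this result as a direct corollary of \autoref{edge-tri-count} by complementing both factors. The key identity is that the lexicographic product commutes with complementation, $\overline{X[Y]} = \overline{X}[\overline{Y}]$, so that an inner (resp.\ outer) non-edge of $X[Y]$ is precisely an inner (resp.\ outer) edge of $\overline{X}[\overline{Y}]$. In particular, the pair $pq = (x,p_y)(x,q_y)$ is an inner edge of $\overline{X}[\overline{Y}]$, and $p'q' = (p_x',p_y')(q_x',q_y')$ is an outer edge of $\overline{X}[\overline{Y}]$.

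The second ingredient is the observation that the Weisfeiler--Leman algorithm treats a graph and its complement symmetrically. More precisely, I would argue that for any graph $Z$, the initial colouring $c_0$ of $Z$ and of $\overline{Z}$ differ only by swapping the labels $1 \leftrightarrow 2$, and hence induce the same partition of $V(Z)^2$. Since the refinement step at each iteration depends only on the partition induced by the current colouring (not on the particular labels), the colouring after the first iteration of WL on $X[Y]$ and on $\overline{X}[\overline{Y}]$ induce the same partition of $V(X[Y])^2 = V(\overline{X}[\overline{Y}])^2$. Consequently, two ordered pairs of vertices are strongly distinguished by the first iteration of WL on $X[Y]$ if and only if they are strongly distinguished by the first iteration of WL on $\overline{X}[\overline{Y}]$.

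Combining these two ingredients, the hypothesis that the first iteration of WL on $X[Y]$ does not strongly distinguish the inner non-edge $pq$ from the outer non-edge $p'q'$ translates into the hypothesis that the first iteration of WL on $\overline{X}[\overline{Y}]$ does not strongly distinguish the inner edge $pq$ from the outer edge $p'q'$. Applying \autoref{edge-tri-count} to $\overline{X}[\overline{Y}]$ (with the roles of $X,Y$ played by $\overline{X},\overline{Y}$) yields that $p_x'$ and $q_x'$ are twins in $\overline{\overline{X}} = X$, and that $N_{\overline{\overline{Y}}}(p_y) \cap N_{\overline{\overline{Y}}}(q_y) = N_Y(p_y) \cap N_Y(q_y) = \varnothing$, which are precisely the two claimed conclusions.

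There is essentially no obstacle here beyond carefully verifying the symmetry claim about WL under complementation; once that is in place the result is immediate. If desired, one could alternatively reprove the analogue of \autoref{prop:first-count} directly for non-edges and mimic the arithmetic of \autoref{edge-tri-count} row by row, but the complement trick avoids any duplicated calculation.
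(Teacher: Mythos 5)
Your proposal is correct and is exactly the paper's argument: the paper derives this proposition from \autoref{edge-tri-count} ``by symmetry (replacing $X$ and $Y$ by $\overline{X}$ and $\overline{Y}$)'', which is precisely the complementation trick you describe. You have simply spelled out the details (that $\overline{X[Y]} = \overline{X}[\overline{Y}]$ and that Weisfeiler--Leman's partitions are invariant under complementation since the initial colouring only relabels $1 \leftrightarrow 2$), which the paper leaves implicit.
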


Now we are ready to prove the main result of this section.

\begin{thm}
	\label{weis-lem-sabi}
	Let $X$ and $Y$ be two graphs satisfying the following properties:
	\begin{enumerate}[label = \textup(\roman*\textup)]
		\item if $Y$ is not connected, then $X$ has no twins, and
		\item if $\overline{Y}$ is not connected, then $\overline{X}$ has no twins.
	\end{enumerate}
	Then the Weisfeiler--Leman algorithm, applied to $X[Y]$, strongly distinguishes all inner edges from all outer edges, and all inner non-edges from all outer non-edges.
\end{thm}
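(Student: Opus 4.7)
The proof reduces to the edge case via the identity $\overline{X[Y]} = \overline X[\overline Y]$ together with the observation that the Weisfeiler--Leman partition of $V\times V$ depends only on the partition that $c_0$ induces (not on the colour labels $1$ and $2$), so it is invariant under complementing the underlying graph; since swapping $X \leftrightarrow \overline X$ and $Y \leftrightarrow \overline Y$ interchanges hypotheses (i) and (ii), it suffices to prove the edge statement. Suppose, for contradiction, that $\overline c$ does not strongly distinguish some inner edge $pq = (x,p_y)(x,q_y)$ from some outer edge $p'q' = (p_x',p_y')(q_x',q_y')$; after swapping $p'\leftrightarrow q'$ if needed we may assume $\overline c(p,q) = \overline c(p',q')$, hence also $c_1(p,q) = c_1(p',q')$. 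Then \autoref{edge-tri-count} forces $p_x'$ and $q_x'$ to be twins in $\overline X$ and $N_{\overline Y}(p_y)\cap N_{\overline Y}(q_y) = \emptyset$; hypothesis (ii) then gives $\overline Y$ connected, and combined with $p_yq_y \in E(Y)$ we obtain $d \coloneqq d_{\overline Y}(p_y,q_y) \geq 3$.

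The plan is now to exhibit a colouring $c$ output by some iteration of WL and a profile $\vec b$ for which the number of $\vec b$-walks from $p$ to $q$ differs from the number from $p'$ to $q'$; by \autoref{lem:longer-paths} this contradicts the assumed equality of stable colours. I target walks that live entirely inside the single fiber $Y_x$: from $p$ to $q$ at least one such walk exists (any shortest $\overline Y$-path from $p_y$ to $q_y$ gives one), whereas from $p'$ to $q'$ none exists because $p_x' \neq q_x'$. The mechanism to encode ``in-fiber'' via colours is supplied by the contrapositives of the two propositions: an inner non-edge $(x'',a)(x'',b)$ with $N_Y(a)\cap N_Y(b) \neq \emptyset$ is already iteration-$1$ separated from every outer non-edge by \autoref{non-edge-tri-count}, and symmetrically an inner edge $(x'',a)(x'',b)$ with $N_{\overline Y}(a)\cap N_{\overline Y}(b) \neq \emptyset$ is iteration-$1$ separated from every outer edge by \autoref{edge-tri-count}.

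A short case analysis eliminates the degenerate shape of $Y$: if $Y$ has no induced $P_3$ then $Y$ is a disjoint union of cliques, and either $Y = K_n$ (so $\overline Y$ is edgeless, whence (ii) forces $\overline X$ twinless, contradicting the twin pair $p_x',q_x'$) or $\overline Y$ is a complete multipartite graph of diameter at most $2$ (contradicting $d\geq 3$). Otherwise I would first attempt a length-$2$ detour $p \to (x,z_y) \to q$ through a shortcut $z_y \in N_{\overline Y}(p_y) \cap N_Y(q_y)$, which is nonempty by the iteration-$1$ identity $|N_{\overline Y}(p_y)\cap N_Y(q_y)| = |N_{\overline Y}(p_y')| > 0$ (the strict inequality because $p_y'$ being universal in $Y$ would make $\overline Y$ disconnected). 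The non-edge $p\to(x,z_y)$ is automatically iteration-$1$ separated ($q_y$ being a common $Y$-neighbour of $p_y$ and $z_y$), and if the edge $(x,z_y)\to q$ is also iteration-$1$ separated then the length-$2$ profile over these two colour classes yields the contradiction via \autoref{lem:longer-paths}.

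The main obstacle I foresee is the remaining sub-case in which no choice of $z_y$ produces an iteration-$1$ separated inner edge $(x,z_y)(x,q_y)$. Here one must bootstrap: iterate WL further so that some iteration-$t$ refines the otherwise-unseparated inner edge classes, and/or combine several shortcut steps into a longer walk of length up to $d$; the identities supplied by \autoref{edge-tri-count} can then be applied repeatedly to the resulting finer classes until a distinguishing profile is found. I expect this bootstrapping argument to be the technical heart of the proof.
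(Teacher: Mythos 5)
Your setup is sound and runs parallel to the paper's proof: the complementation reduction of the non-edge case, the use of \autoref{edge-tri-count} to handle the case where $\overline{X}$ has no twins, and the overall strategy of applying \autoref{lem:longer-paths} to a walk confined to a single fibre $Y_x$ whose step-colours are certified ``inner-only'' by the first-iteration counts. However, the proposal has a genuine gap exactly where you flag it: you only complete the argument in the sub-case where some shortcut vertex $z_y \in N_{\overline{Y}}(p_y)\cap N_Y(q_y)$ happens to make the inner edge $(x,z_y)(x,q_y)$ iteration-$1$ separated from outer edges (i.e.\ $N_{\overline{Y}}(z_y)\cap N_{\overline{Y}}(q_y)\neq\varnothing$), and you defer the remaining case to an unspecified ``bootstrapping'' by further WL iterations. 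That deferral is not a routine technicality: iterating WL does not by itself hand you a certificate that a given colour class consists only of inner pairs, which is the property your walk argument needs at every step, so it is not clear the bootstrap can be carried out as described.

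The missing idea, which is how the paper closes this case, is to build the walk differently: take a \emph{shortest} path $p_y = y_0 y_1 \cdots y_\ell = q_y$ in $\overline{Y}$ and traverse it in steps of size $2$ (plus at most one final step of size $1$ if $\ell$ is odd). By minimality of the path, $y_{2i-2}y_{2i}\in E(Y)$, and $y_{2i-1}$ is a common $\overline{Y}$-neighbour of $y_{2i-2}$ and $y_{2i}$, so \emph{every} edge on this walk automatically lies in the set $F$ of inner edges that \autoref{edge-tri-count} strongly distinguishes from all outer edges already after one iteration --- no case analysis on whether a suitable $z_y$ exists, and no further refinement of the colouring, is needed. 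A matching walk from $p'$ to $q'$ would then consist of inner edges (hence stay in $Y_{p_x'}$) followed by at most one non-edge into $Y_{q_x'}$, which is impossible since all pairs between those two fibres are edges; \autoref{lem:longer-paths} finishes the argument. Note also that the paper applies this construction directly to every inner edge and every outer edge, so your preliminary contradiction setup (and the $P_3$-free case analysis, and the $d_{\overline{Y}}(p_y,q_y)\geq 3$ observation) becomes unnecessary once the walk is chosen this way.
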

\begin{proof}
	We prove that Weisfeiler--Leman strongly distinguishes inner edges from outer edges; the proof for non-edges follows analogously.
	
	If $\overline{X}$ has no twins, then it follows from \autoref{edge-tri-count} that all inner edges are strongly distinguished from all outer edges.
	
	Assume now that $\overline{Y}$ is connected.
	Let $c_1 : V(X[Y])^2 \to C$ be the colouring function after the first iteration of the Weisfeiler--Leman algorithm, and let $F \subseteq E(X[Y])$ be the set of all inner edges $pq = (x,p_y)(x,q_y)$ such that $N_{\overline{Y}}(p_y) \cap N_{\overline{Y}}(q_y) = \varnothing$; that is, $p$ and $q$ do not have a common inner non-neighbour.
	By \autoref{edge-tri-count}, the first iteration of Weisfeiler--Leman strongly distinguishes all edges in $F$ from all outer edges, so there is a set $C_F$ of colour classes such that $c_1(p,q) , c_1(q,p) \in C_F$ for all $pq \in F$ and $c_1(p',q'),c_1(q',p') \notin C_F$ for all outer edges $p'q'$.
	
	Consider an inner edge $pq = (x,p_y)(x,q_y)$ and an outer edge $p'q' = (p_x',p_y')(q_x',q_y')$.
	Since $\overline{Y}$ is connected, we may choose a shortest path $p_y = y_0 y_1 \cdots y_\ell = q_y$ between $p_y$ and $q_y$ in $\overline{Y}$.
	Note that $y_iy_j \in E(Y)$ whenever $|i - j| > 1$, for otherwise there would be a shorter path between $p_y$ and $q_y$ in $E(\overline{Y})$.
	Now write $k = \lceil \frac{\ell}{2} \rceil$ and consider the vertex tuple $(r_0,\ldots,r_k) \in V(X[Y])^{k+1}$ given by
	\[ (r_0,r_1,\ldots,r_k) = \begin{cases}
	((x,y_0),(x,y_2),\ldots,(x,y_{2k})) & \text{if $\ell = 2k$}, \\
	((x,y_0),(x,y_2),\ldots,(x,y_{2k-2}),(x,y_{2k-1})) & \text{if $\ell = 2k - 1$}.
	\end{cases} \]
	(In other words, we define $r$ by going from $p$ to $q$ by taking steps of size $2$ and at most one step of size $1$ along the path $(x,y_0)(x,y_1)\cdots(x,y_\ell)$ in $X[Y]$.)
	If $\ell$ is even or $i < k$, then $r_{i-1}r_i = (x,y_{2i-2})(x,y_{2i})$ is an inner edge of $X[Y]$ (since $y_{2i-2}y_{2i} \in E(Y)$, by the above), and its endpoints have a common inner non-neighbour $(x,y_{2i-1})$, so we have $r_{i-1}r_i \in F$.
	It follows that there is a walk\footnote{We use the term ``walk'' a bit loosely here, since it can include both edges and non-edges. This makes sense if one adopts the Weisfeiler--Leman point of view by thinking of the graph $X[Y]$ as being a complete graph on the vertex set $V(X[Y])$ together with a colouring function $V(X[Y])^2 \to C$. From that perspective, non-edges are just edges in a different colour.} from $p$ to $q$ using a number of edges from $F$ followed by at most one non-edge.
	
	We show that a walk with the same sequence of colours does not exist between $p'$ and $q'$.
	Assume, for contradiction, that there exists a walk $(r_0',\ldots,r_k')$ of length $k$ between $r_0' = p'$ and $r_k' = q'$ such that $c_1(r_{i-1}',r_i') = c_1(r_{i-1},r_i)$ for all $i \in [k]$.
	Since $c_1$ is a refinement of $c_0$, we have $r_{i-1}'r_i' \in E(X[Y])$ if and only if $r_{i-1}r_i \in E(X[Y])$, so we see that the walk $r_0'r_1'\cdots r_k$ consists of a number of edges followed by at most one non-edge (in case $\ell$ is odd).
	Furthermore, the edges among these are coloured (by $c_1$) using colours from $C_F$, so they must be inner edges (no outer edge receives a colour from $C_F$).
	But this contradicts the structure of $X[Y]$, for the following reasons:
	\begin{itemize}
		\item If $\ell$ is even, then the walk $p' = r_0'r_1'\cdots r_k' = q'$ consists entirely of inner edges, contrary to the assumption that $p'$ and $q'$ belong to different copies of $Y$ (that is, $p'_x \neq q'_x$, because $p'q'$ is an outer edge).
		\item If $\ell$ is odd, then the walk $p' = r_0'r_1'\cdots r_k' = q'$ consists of $k - 1$ inner edges followed by a non-edge. But then it follows that $p'$ and $r_{k-1}'$ belong to the same copy of $Y$, but there is an edge $p'q'$ and no edge $r_{k-1}'q'$, contrary to the definition of the lexicographic product.
	\end{itemize}
	This situation is illustrated in \autoref{fig:weis-lem-sabi-situation}.
	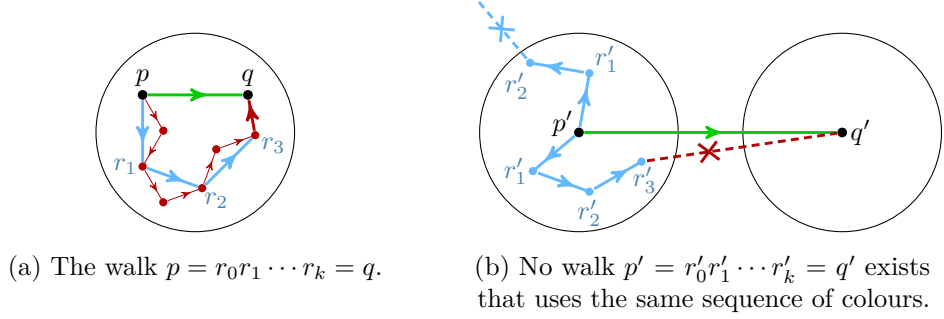
\begin{figure}[h]
		\centering
		\def\Yfactorradius{1.32}
		\def\labeldist{-1.9}
		\def\pqX{.7}
		\def\pqY{.5}
		\def\thinredarrowlength{.55}
		\def\bluearrowlength{.8}
		\def\Xsize{3.5}
		\begin{tikzpicture}[scale=1,small_vertex/.style={circle,fill,inner sep=1.1pt},
			            inner_edge/.style={green!80!black,semithick},
			            inner_non_edge/.style={red!70!black,line width=.3pt},
			            outer_non_edge/.style={red!70!black},
			            inner_blue_edge/.style={SteelBlue1,semithick},
			            inner_blue_edge_label/.style={SteelBlue1!70!black},
			            my_emphasis/.style={line width=1.1pt}]
			\begin{scope}[decoration={markings,mark=at position .5 with {\arrow[xshift=1.5pt+2.25\pgflinewidth]{>}}}] % https://tex.stackexchange.com/a/222299
				\begin{scope}
					\draw (0,0) circle (\Yfactorradius);
					\begin{scope}[yshift=\pqY cm]
						\draw (-\pqX,0) node[vertex] (p) {} ++(90:6.5pt) node {$p$};
						\draw (\pqX,0) node[vertex] (q) {} ++(90:6.5pt) node {$q$};
						\draw[postaction={decorate},inner_edge,my_emphasis] (p) -- (q);
						\draw[postaction={decorate},inner_non_edge] (p) -- ++(-60:\thinredarrowlength) node[small_vertex] (r1) {};
						\draw[postaction={decorate},inner_non_edge] (r1) -- ++(-120:\thinredarrowlength) node[small_vertex] (r2) {} ++(180:6.5pt) node[inner_blue_edge_label] {$r_1$};
						\draw[postaction={decorate},inner_non_edge] (r2) -- ++(-60:\thinredarrowlength) node[small_vertex](r3) {};
						\draw[postaction={decorate},inner_non_edge] (r3) -- ++(20:\thinredarrowlength) node[small_vertex] (r4) {} ++(-40:7pt) node[inner_blue_edge_label] {$r_2$};
						\draw[postaction={decorate},inner_non_edge] (r4) -- ++(70:\thinredarrowlength) node[small_vertex] (r5) {};
						\draw[postaction={decorate},inner_non_edge] (r5) -- ++(20:\thinredarrowlength) node[small_vertex] (r6) {} ++(-30:8pt) node[inner_blue_edge_label] {$r_3$};
						\draw[postaction={decorate},inner_non_edge,my_emphasis] (r6) -- (q);
						\draw[postaction={decorate},inner_blue_edge,my_emphasis] (p) -- (r2);
						\draw[postaction={decorate},inner_blue_edge,my_emphasis] (r2) -- (r4);
						\draw[postaction={decorate},inner_blue_edge,my_emphasis] (r4) -- (r6);
					\end{scope}
					\node[anchor=base] at (0,\labeldist) {(a) The walk $p = r_0r_1\cdots r_k = q$.};
				\end{scope}
				\begin{scope}[xshift=5.1cm]
					\draw (0,0) circle (\Yfactorradius);
					\draw (3.5,0) circle (\Yfactorradius);
					\draw (0,0) node[vertex] (p) {} ++(150:7pt) node {$p'$};
					\draw (3.5,0) node[vertex] (q) {} ++(0:7pt) node {$q'$};
					\draw[postaction={decorate},inner_edge,my_emphasis] (p) -- (q);
					\draw[postaction={decorate},inner_blue_edge,my_emphasis] (p) -- ++(80:\bluearrowlength) node[small_vertex] (r1) {} ++(40:8.5pt) node[inner_blue_edge_label] {$r_1'$};
					\draw[postaction={decorate},inner_blue_edge,my_emphasis] (r1) -- ++(170:\bluearrowlength) node[small_vertex] (r2) {} ++(-120:10pt) node[inner_blue_edge_label] {$r_2'$};
					\draw[postaction={decorate},inner_blue_edge,my_emphasis,>={Rays[width=\Xsize mm,length=\Xsize mm]},densely dashed] (r2) -- ++(130:1.1cm);
					\draw[postaction={decorate},inner_blue_edge,my_emphasis] (p) -- ++(-140:\bluearrowlength) node[small_vertex] (r1) {} ++(180:6.5pt) node[inner_blue_edge_label] {$r_1'$};
					\draw[postaction={decorate},inner_blue_edge,my_emphasis] (r1) -- ++(-20:\bluearrowlength) node[small_vertex] (r2) {} ++(-90:8pt) node[inner_blue_edge_label] {$r_2'$};
					\draw[postaction={decorate},inner_blue_edge,my_emphasis] (r2) -- ++(30:\bluearrowlength) node[small_vertex] (r3) {} ++(-80:7pt) node[inner_blue_edge_label] {$r_3'$};
					% (b) caption
					\node[anchor=base,text width=6.2cm] at (1.75,\labeldist) {(b) No walk $p' = r_0'r_1'\cdots r_k' = q'$ exists that uses the same sequence of colours.};
				\end{scope}
			\end{scope}
			\draw[outer_non_edge,my_emphasis,postaction=decorate,decoration={markings,mark=at position .38 with {\arrow{Rays[width=\Xsize mm,length=\Xsize mm]}}},densely dashed] (r3) -- (q);
		\end{tikzpicture}
		\caption{The situation at the end of the proof of \autoref{weis-lem-sabi}. The edges coloured using colours from $C_F$ (depicted in blue) never cross from $Y_{p_x'}$ to a different copy of $Y$ (they are always inner edges), and there are no non-edges (depicted in red) between $Y_{p_x'}$ and $Y_{q_x'}$. Non-existent edges are indicated by a crossed-out dashed line.}
		\label{fig:weis-lem-sabi-situation}
	\end{figure}
	
	We conclude that there is no $k$-tuple $(r_0',\ldots,r_k') \in V(X[Y])^{k + 1}$ with $r_0' = p'$ and $r_k' = q'$ that receives the same sequence of colours as $(r_0,\ldots,r_k)$.
	As a consequence, it follows from \autoref{lem:longer-paths} that the ordered pair $(p,q)$ and the ordered pair $(p',q')$ are distinguished by Weisfeiler--Leman.
	Using the same argument for all other possible orientations of the edges $pq$ and $p'q'$, we see that the inner edge $pq$ and the outer edge $p'q'$ are strongly distinguished by Weisfeiler--Leman.
	The proof for non-edges follows analogously.
\end{proof}

\section{Quantum Sabidussi's theorem}
\label{sec:main}

In \cite{sabidussi1959,sabidussi1961}, Sabidussi gives necessary and sufficient conditions for the automorphism group of the lexicographic product of two graphs to be equal to the wreath product of the automorphism groups of the two graphs. In this section, we first give an alternative proof of Sabidussi's theorem and then prove the quantum analogue, \autoref{main-result}. We note here that Sabidussi proved a more general version, where the graphs are not necessarily finite.

\subsection{Alternative proof of Sabidussi's theorem}

We define an important subgroup of $\Aut(Z)$, where $Z = X[Y]$.
Each vertex of $Z$ is identified with a tuple $(x, y)$,
where $x \in V(X)$ and $y \in V(Y)$.
We define two types of automorphisms of $Z$.

For $x \in V(X)$ and $\psi \in \Aut(Y)$, we define
\[ \psi_x^*(x', y) \coloneqq
	\begin{cases}
		(x',\psi (y)) & \quad \text{if $x' = x$},\\
		(x', y) & \quad \text{if $x' \neq x$}.
	\end{cases} \]
The set
\[ H_x^* \coloneqq \{\psi_x^* : \psi \in \Aut(Y)\} \]
forms a subgroup of $\Aut(Z)$ that permutes the vertices of $Y_x$ while fixing the rest of the graph.

For $\varphi\in\Aut(X)$, we define
\[ \varphi^*(x, y) \coloneqq (\varphi(x), y). \]
The set
\[ G^* \coloneqq \{\varphi^* : \varphi \in \Aut(X)\} \]
forms a subgroup of $\Aut(Z)$ that permutes the copies of $Y$ in $Z$.

Finally, we define the group
\[ W_Z \coloneqq \langle G^*, H_x^* : x \in V(X)\rangle \leq \Aut(Z). \]
Recall that we denote by $\wr$ the wreath product of two groups. With the above notation, we have the following lemma, which is known, but we include the proof for completeness.

\begin{lem}
	\label{lem:wr_prod}
	We have $W_Z \cong \Aut(Y)\wr\Aut(X)$.
\end{lem}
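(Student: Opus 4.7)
The plan is to realise $W_Z$ as an internal semidirect product and match it with the wreath product. Set $N \coloneqq \langle H_x^* : x \in V(X)\rangle$. Because distinct fibres $Y_x, Y_{x'}$ have disjoint vertex sets, elements of $H_x^*$ and $H_{x'}^*$ act non-trivially on disjoint sets of vertices; hence they commute, and any non-identity element of $H_x^*$ moves some vertex in $Y_x$ that every element of $H_{x'}^*$ fixes. Consequently $N = \prod_{x \in V(X)} H_x^*$ as an internal direct product, and the map $(\psi_x)_{x \in V(X)} \longmapsto \prod_{x} (\psi_x)_x^*$ is an isomorphism $\Aut(Y)^{V(X)} \xrightarrow{\ \cong\ } N$.

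Next I would verify three things: (a) $G^* \cong \Aut(X)$, which is immediate from the definition of $\varphi \mapsto \varphi^*$; (b) $N \cap G^* = \{e\}$, because every element of $N$ fixes the first coordinate of every vertex while no non-identity element of $G^*$ does; and (c) $G^*$ normalises $N$ via the conjugation formula
\begin{equation*}
  \varphi^* \, \psi_x^* \, (\varphi^*)^{-1} = \psi_{\varphi(x)}^* \qquad \text{for all } \varphi \in \Aut(X),\ x \in V(X),\ \psi \in \Aut(Y),
\end{equation*}
which is a direct unwinding of the definitions of $\varphi^*$ and $\psi_x^*$ on a vertex $(x', y)$. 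Combining (a)--(c) gives $W_Z = N \rtimes G^*$ as an internal semidirect product, and under the identification $N \cong \Aut(Y)^{V(X)}$ the conjugation action of $G^*$ coincides with the natural action of $\Aut(X)$ on $\Aut(Y)^{V(X)}$ by permutation of the factors along the action on $V(X)$. This is exactly the wreath product action, so $W_Z \cong \Aut(Y) \wr \Aut(X)$.

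The one step that is easy to gloss over is the preliminary check that $\psi_x^*$ and $\varphi^*$ really do belong to $\Aut(Z)$. This is where the special structure of the lexicographic product matters: inner edges inside $Y_x$ are preserved because $\psi \in \Aut(Y)$ and because $\varphi^*$ sends $Y_x$ onto $Y_{\varphi(x)}$ as a whole; outer edges between $Y_x$ and $Y_{x'}$ either form a complete bipartite graph or are absent, depending only on whether $xx' \in E(X)$, so permuting vertices within a single fibre (for $\psi_x^*$) or relabelling fibres by $\varphi \in \Aut(X)$ (for $\varphi^*$) preserves the adjacency pattern. None of this is deep, but it is the one place where the definition of $X[Y]$ is actively used; the remainder of the argument is purely group-theoretic.
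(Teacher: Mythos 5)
Your proposal is correct and follows essentially the same route as the paper: both realise $W_Z$ as an internal semidirect product of the direct product $\prod_{x\in V(X)} H_x^*$ by $G^*$, using the same conjugation identity $\varphi^*\psi_x^*(\varphi^*)^{-1} = \psi_{\varphi(x)}^*$ to identify the action with the wreath product action. Your additional check that $\psi_x^*$ and $\varphi^*$ are genuinely automorphisms of $Z$ is a point the paper absorbs into the definitions preceding the lemma rather than into the proof itself.
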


\begin{proof}
	We claim that the group
	is isomorphic to $\Aut(Y)\wr\Aut(X)$.
	Note that $H_x^* \cong \Aut(Y)$ and $G^* \cong \Aut(X)$.
	Thus, it suffices to show that $W_Z$ is a wreath product of $H_x^*$ by $G^*$.
	
	Firstly, let
	\[ K^* \coloneqq \left\langle\bigcup_{x\in V(X)}H_x^*\right\rangle. \]
	The group $H_x^*$ centralizes $H_{x'}^*$ if and only if $x\neq x'$.
	Thus, $H_x^*$ is a normal subgroup of $K^*$.
	Further, the group
	\[ H_x^* \cap \left\langle\bigcup_{x'\neq x\in V(X)}H_{x'}^*\right\rangle \]
	is trivial.
	Therefore, the subgroup $K^*$ is the direct product
	\[ \prod_{x\in V(X)}H_x^*. \]
	
	Now, we show that $W_Z$ is a semidirect product of $K^*$ by $G^*$.
	We have
	\[ \varphi^*\psi_x^*\varphi^{*-1} = \psi_{\varphi x}^*, \]
	which implies that $K^*$ is a normal subgroup of $W_Z$ and that $W_Z = K^*G^*$.
	Clearly, every $\psi_x^*$ fixes the second coordinate and every $\varphi^*$ fixes the first coordinate.
	Thus, $K^*\cap G^*$ is trivial and $W_Z$ is a semidirect product of $K^*$ by $G^*$.
	
	Finally, the map $\Aut(Y)\wr\Aut(X) \to G$, defined by
	\[ (\psi_x)_{x \in V(X)}\varphi \mapsto \left (\prod_{x \in V(X)} \psi_x^*\right ) \varphi^*, \]
	is an isomorphism.
\end{proof}

\begin{thm}[Sabidussi]
	\label{Sabidussi}
	Let $X$ and $Y$ be graphs.
	Then $\Aut(X[Y])$ is isomorphic to $\Aut(Y)\wr\Aut(X)$ if and only if the following hold:
	(i) if $Y$ is disconnected, then $X$ has no twins, and, (ii) if $\overline{Y}$ is disconnected, then $\overline{X}$ has no twins.
\end{thm}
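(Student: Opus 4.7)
The plan is to recast the theorem as the equality $W_Z = \Aut(X[Y])$, where $W_Z$ is the subgroup defined just before \autoref{lem:wr_prod}: the containment $W_Z \leq \Aut(X[Y])$ and the isomorphism $W_Z \cong \Aut(Y) \wr \Aut(X)$ are already in place, so the theorem reduces to showing that every automorphism of $X[Y]$ lies in $W_Z$ precisely when (i) and (ii) both hold.

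For the sufficiency direction, assume (i) and (ii). I would rely on the standard fact that every graph automorphism preserves the Weisfeiler--Leman stable colouring (by induction: the initial colouring is $\Aut$-invariant, and each refinement step commutes with relabellings of the vertex set). Combined with \autoref{weis-lem-sabi}, this forces any $f \in \Aut(X[Y])$ to map inner edges to inner edges and inner non-edges to inner non-edges. Hence two distinct vertices of $X[Y]$ lie in a common copy of $Y$ iff they span an inner edge or inner non-edge, so $f$ permutes the copies $\{Y_x : x \in V(X)\}$ and induces a permutation $\varphi$ of $V(X)$; this $\varphi$ lies in $\Aut(X)$, since $xx' \in E(X)$ is witnessed by the presence of any outer edge between $Y_x$ and $Y_{x'}$. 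After replacing $f$ by $(\varphi^{-1})^* f$ I may assume each copy $Y_x$ is setwise fixed, so the restriction of $f$ to $Y_x$ lies in $\Aut(Y_x) \cong \Aut(Y)$ for each $x$, giving $f \in K^* \leq W_Z$.

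For necessity I would exhibit an explicit $f \in \Aut(X[Y]) \setminus W_Z$ whenever (i) or (ii) fails. If (i) fails, pick distinct twins $x_1, x_2$ in $X$ and a connected component $C$ of $Y$; define $f$ to swap $(x_1, y) \leftrightarrow (x_2, y)$ for $y \in V(C)$ and to fix everything else. A short case analysis using that no edge of $Y$ crosses $C$, that $x_1$ and $x_2$ share their $X$-neighbourhood, and that $x_1 x_2 \notin E(X)$ shows $f \in \Aut(X[Y])$. The image $f(Y_{x_1})$ meets both $Y_{x_1}$ and $Y_{x_2}$, whereas every element of $W_Z$ maps each copy $Y_x$ into a single copy $Y_{\varphi(x)}$; hence $f \notin W_Z$. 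If (ii) fails, apply the same construction to the complement $\overline{X[Y]} = \overline{X}[\overline{Y}]$, using $\Aut(\overline{X[Y]}) = \Aut(X[Y])$.

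I expect the main obstacle to be cleanly establishing the $\Aut$-invariance of the Weisfeiler--Leman colouring, since the paper records the structural consequences of WL in \autoref{weis-lem-sabi} but not this invariance. Once that is in hand, the sufficiency direction becomes a short three-step chain (inner/outer preserved $\Rightarrow$ copy partition preserved $\Rightarrow$ factorisation into a $\varphi^*$ and a product of $\psi_x^*$), and the necessity direction is a direct verification.
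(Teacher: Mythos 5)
Your proposal is correct and follows essentially the same route as the paper: both directions hinge on \autoref{weis-lem-sabi} together with the (standard) automorphism-invariance of the Weisfeiler--Leman stable colouring to show that the copies $Y_x$ form an $\Aut$-invariant partition, hence $\Aut(X[Y]) = W_Z \cong \Aut(Y)\wr\Aut(X)$, and the converse is the same explicit component-swapping automorphism built from a pair of twins and a connected component of $Y$ (resp.\ of $\overline{Y}$ via complementation). Your write-up merely makes explicit two steps the paper leaves implicit, namely the invariance of the stable colouring under automorphisms and the factorisation of $f$ into $\varphi^*$ times a product of $\psi_x^*$.
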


\begin{proof}
	If (i) and (ii) are satisfied, then, by \autoref{weis-lem-sabi}, there exists an automorphism-invariant colouring $c$ of the pairs of vertices of $Z = X[Y]$ which distinguishes the inner and outer edges of $Z$ and the inner and outer edges of $\overline{Z}$.
	For $x \in V(X)$, let $B_x \coloneqq \{(x, y) : y \in V(Y)\}$.
	Let $z, z' \in B_x$ and let $\varphi \in \Aut(Z)$
	We have that $\varphi(z) \in B_{x'}$ and $\varphi(z') \in B_{x''}$ implies $x' = x''$ since the colouring $c$ is automorphism-invariant.
	Thus, the sets $B_x$ form an automorphism-invariant partition of $V(Z)$.
	It follows that $\Aut(Z)$ is equal to the group $W_Z$, defined above.
	By \autoref{lem:wr_prod}, we have that $W_Z \cong \Aut(Y)\wr\Aut(X)$.
	
	For the converse, suppose that the condition (i) does not hold, i.e., $Y$ is disconnected and $X$ has twins $x$ and $x'$.
	Let $C \subsetneq B_x$ and $C'\subsetneq B_{x'}$ be subsets of vertices such that the respective induced subgraphs are isomorphic to the same connected component of $Y$.
	Then there exists an automorphism $\varphi \in \Aut(Z)$ which swaps $C$ and $C'$ and is identity on $V(Z)\setminus (C\cup C')$.
	Clearly, $\varphi \notin W_Z$.
	If the condition (ii) does not hold, then the same argument applies for $\overline{Y}$ and $\overline{X}$ since $\Aut(Z) = \Aut(\overline{Z}) = \Aut(\overline{X}[\overline{Y}])$.
\end{proof}

\subsection{Proof of quantum Sabidussi's theorem}
In this section, we prove our main result, \autoref{main-result}, which is a quantum version of Sabidussi's theorem.
First, we define the \emph{free product} of graphs as introduced in \cite{Banica-Bichon-free-product}. The \emph{free product} of two edge-coloured graphs $X$ and $Y$ with colours $c_X : E(X) \to \{1,2,\dots, p\}$ and $c_Y: E(Y) \to \{1,\dots,q\}$ is the edge-coloured graph $X \ast Y$. The following subsets of $V(X) \times V(Y)$ times itself determine the edge-coloured graph $X \ast Y$:
\begin{enumerate}
	\item $E_r^0 \coloneqq \{(x,y)(x',z): xx' \in E(X), c_X(xx') = r, y,z \in V(Y)\}$
	\item $F_s^0 \coloneqq \{(x,y)(x,z): y,z \in E(Y), c_Y(yz) = s, x \in V(X)\}$
\end{enumerate}

The following theorem was proved in \cite{Banica-Bichon-free-product}:

\begin{thm}[{\cite[Theorem 6.1]{Banica-Bichon-free-product}}]
	\label{free-prod}
	If $X$ and $Y$ are complete edge coloured graphs, then $\Qut(X \ast Y) = \Qut(Y) \wr_* \Qut(X) $.
\end{thm}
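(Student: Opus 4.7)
The plan is to establish the isomorphism $C(\Qut(X\ast Y)) \cong C(\Qut(Y) \wr_* \Qut(X))$ by constructing mutually inverse $\ast$-homomorphisms using the universal properties of both sides.

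\emph{Easy direction.} To define $\Phi : C(\Qut(X\ast Y)) \to C(\Qut(Y) \wr_* \Qut(X))$, send the fundamental magic unitary $[w_{(a,i)(b,j)}]$ of $\Qut(X\ast Y)$ to $W_{(a,i)(b,j)} \coloneqq u^{(a)}_{ij} v_{ab}$ (the fundamental representation of the free wreath product, see \autoref{def:free-wreath-product}). That $W$ is a magic unitary follows from $[u^{(a)}_{ij}, v_{ab}] = 0$ combined with the magic unitary properties of $u^{(a)}$ and $v$. To verify that $W$ preserves the coloured adjacency of $X \ast Y$, use $A_{X\ast Y,r} = A_{X,r} \otimes J_{V(Y)}$ for each $X$-colour $r$ and $A_{X\ast Y,s} = I_{V(X)} \otimes A_{Y,s}$ for each $Y$-colour $s$. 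The identity $u^{(a)} J_{V(Y)} = J_{V(Y)} = J_{V(Y)} u^{(a)}$ collapses the first commutation to $v A_{X,r} = A_{X,r} v$, valid since $v$ is the fundamental representation of $\Qut(X)$; the second reduces block-diagonally to $u^{(a)} A_{Y,s} = A_{Y,s} u^{(a)}$, valid since $u^{(a)}$ is the fundamental representation of the $a$-th copy of $\Qut(Y)$ in $\Qut(Y)^{\ast n}$.

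\emph{Main direction.} For $\Psi : C(\Qut(Y) \wr_* \Qut(X)) \to C(\Qut(X\ast Y))$, fix some $i_0 \in V(Y)$ and define, in $C(\Qut(X\ast Y))$,
\[ V_{ab} \coloneqq \sum_{k \in V(Y)} w_{(a,i_0)(b,k)} \quad\text{and}\quad U^{(a)}_{ij} \coloneqq \sum_{b \in V(X)} w_{(a,i)(b,j)}. \]
I plan to verify: \textup{(i)} $V_{ab}$ is independent of the choice of $i_0$; \textup{(ii)} $V$ is a magic unitary with $V A_{X,r} = A_{X,r} V$ for every $X$-colour $r$; \textup{(iii)} each $U^{(a)}$ is a magic unitary with $U^{(a)} A_{Y,s} = A_{Y,s} U^{(a)}$ for every $Y$-colour $s$; \textup{(iv)} $U^{(a)}_{ij}$ and $V_{ab}$ commute, with $U^{(a)}_{ij} V_{ab} = w_{(a,i)(b,j)}$. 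These four items, via the universal property of $C(\Qut(Y) \wr_* \Qut(X))$, yield $\Psi$. Item \textup{(iii)} is obtained by summing the entrywise commutation of $w$ with $I_{V(X)} \otimes A_{Y,s}$ over $b$, producing $U^{(a)} A_{Y,s} = A_{Y,s} U^{(a)}$; the column-sum identity for $U^{(a)}$ then follows from $\sum_s A_{Y,s} = J_{V(Y)} - I_{V(Y)}$ (using that $Y$ is complete coloured) combined with the row-sum of $w$. Item \textup{(iv)} follows by taking $i_0 = i$: then both $U^{(a)}_{ij}$ and $V_{ab}$ are sums of $w$-entries sharing the row $(a,i)$, and magic unitary orthogonality collapses the product (in either order) to $w_{(a,i)(b,j)}^2 = w_{(a,i)(b,j)}$.

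\emph{Main obstacle.} The crucial step is \textup{(i)}. The plan is to exploit the commutation $w(A_{X,r} \otimes J_{V(Y)}) = (A_{X,r}\otimes J_{V(Y)})w$: unpacking entry $((a,i),(b,j))$ gives $p^{(i)} A_{X,r} = A_{X,r} q^{(j)}$ for all $i,j,r$, where $p^{(i)}_{ab} \coloneqq \sum_k w_{(a,i)(b,k)}$ and $q^{(j)}_{ab} \coloneqq \sum_k w_{(a,k)(b,j)}$ (matrices on $V(X)$). Fixing $j$ and subtracting two such identities for distinct $i, i'$ gives $(p^{(i)} - p^{(i')}) A_{X,r} = 0$; summing over $r$, using $\sum_r A_{X,r} = J_{V(X)} - I_{V(X)}$ (as $X$ is complete coloured) and the row-sum identity $p^{(i)} J_{V(X)} = J_{V(X)} = p^{(i')} J_{V(X)}$ (from the row-sum of $w$), forces $p^{(i)} = p^{(i')}$, establishing \textup{(i)}. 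The symmetric argument shows $q^{(j)} = q^{(j')}$; then $p (J_{V(X)} - I_{V(X)}) = (J_{V(X)} - I_{V(X)}) q$ together with $p J_{V(X)} = J_{V(X)} = J_{V(X)} q$ forces $p = q$, so $V = p = q$. The column-sum of $V$ is inherited from that of $w$ via $q$, and $V A_{X,r} = A_{X,r} V$ is immediate, giving \textup{(ii)}. Finally, $\Phi \circ \Psi$ and $\Psi \circ \Phi$ act as the identity on generators: $\Phi \circ \Psi(u^{(a)}_{ij}) = \sum_b u^{(a)}_{ij} v_{ab} = u^{(a)}_{ij}$, $\Phi \circ \Psi(v_{ab}) = \sum_k u^{(a)}_{i_0 k} v_{ab} = v_{ab}$ (using row-sum identities in $\Qut(Y) \wr_* \Qut(X)$), and $\Psi \circ \Phi(w_{(a,i)(b,j)}) = U^{(a)}_{ij} V_{ab} = w_{(a,i)(b,j)}$ by \textup{(iv)}. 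The completeness of the edge-colourings of $X$ and $Y$ is indispensable, as it delivers the decompositions $\sum_r A_{X,r} = J_{V(X)} - I_{V(X)}$ and $\sum_s A_{Y,s} = J_{V(Y)} - I_{V(Y)}$ needed throughout.
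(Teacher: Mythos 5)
The paper itself does not prove this statement: it is imported verbatim as \cite[Theorem 6.1]{Banica-Bichon-free-product}, so there is no in-paper argument to compare against. Judged on its own, your proposal is a correct, self-contained proof along the lines of the original Banica--Bichon argument: two morphisms built from universal properties, with all the real content in showing that the generators of the free wreath product can be recovered inside $C(\Qut(X*Y))$. The individual steps check out. The ``easy'' direction is a routine computation with $A_{X,r}\otimes J_{V(Y)}$ and $I_{V(X)}\otimes A_{Y,s}$. For the converse, your handling of the key point --- that $p^{(i)}_{ab}=\sum_k w_{(a,i)(b,k)}$ is independent of $i$ --- is clean: from $p^{(i)}A_{X,r}=A_{X,r}q^{(j)}$ one gets $(p^{(i)}-p^{(i')})(J_{V(X)}-I_{V(X)})=0$ after summing over the colours $r$ (this is exactly where completeness of the colouring of $X$ enters), and $p^{(i)}J_{V(X)}=J_{V(X)}$ then forces $p^{(i)}=p^{(i')}$; the same mechanism gives $p=q$ and the column sums of $U^{(a)}$. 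Your item (iv) correctly exploits that, after choosing $i_0=i$, both $U^{(a)}_{ij}$ and $V_{ab}$ are sums of entries from the single row $(a,i)$ of the magic unitary $w$, so orthogonality collapses the product to $w_{(a,i)(b,j)}$ in either order; note this verifies precisely the relations $[u^{(a)}_{ij},v_{ab}]=0$ (matching index $a$) that \autoref{def:free-wreath-product} imposes, which is all that is needed. The only point worth flagging is cosmetic: the paper's \eqref{eq:qaut_def} is stated for uncoloured edges, so you are implicitly using the standard extension of the definition of $\Qut$ to edge-coloured graphs, namely commutation with each colour-class adjacency matrix; that is the intended reading here and is consistent with how the free product of graphs is used in \autoref{sec:main}.
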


It should be noted that if $X$ and $Y$ are any graphs, we may form the complete edge-coloured graphs $\widetilde{X}$ and $\widetilde{Y}$ by colouring the edges in $X$ and $Y$ green, and colouring the non-edges red. Then, it is clear that the free product $\widetilde{X} \ast \widetilde{Y}$ is a refinement of the complete edge-coloured graph $\widetilde{X[Y]}$. It is also not too difficult to see that for any graph $X$, $\Qut(\widetilde{X}) = \Qut(X)$. Hence, for all graphs $X$ and $Y$ we have
\[ \Qut(Y) \wr_* \Qut(X) = \Qut(\widetilde{Y}) \wr_* \Qut(\widetilde{X}) = \Qut(\widetilde{X} \ast \widetilde{Y}) \subseteq \Qut(\widetilde{X[Y]}) = \Qut(X[Y]). \]
Now, we shall prove that $\Qut(X[Y]) = \Qut(\widetilde{X} \ast \widetilde{Y})$, under the assumptions of \autoref{weis-lem-sabi}. This will complete the proof of \autoref{main-result}.

\begin{lem}
	\label{lem:wre-and-free}
	Let $X$ and $Y$ be two graphs such that
	\begin{enumerate}[label = \roman*.]
		\item if $Y$ is not connected, then $X$ has no twins,
		\item if $\overline{Y}$ is not connected, then $\overline{X}$ has no twins,
	\end{enumerate}
	and $X[Y]$ be their lexicographic product. Then, $\Qut(X[Y]) = \Qut(\widetilde{X} \ast \widetilde{Y})$.
\end{lem}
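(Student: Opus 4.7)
The plan is to establish the nontrivial inclusion $\Qut(X[Y]) \subseteq \Qut(\widetilde{X} \ast \widetilde{Y})$, since the reverse inclusion has already been recorded in the discussion preceding the lemma. By the universal property defining $\Qut(\widetilde{X} \ast \widetilde{Y})$, it suffices to verify that the fundamental magic unitary $u = [u_{pq}]_{p,q \in V(X[Y])}$ of $\Qut(X[Y])$ satisfies all the defining relations of $\Qut(\widetilde{X} \ast \widetilde{Y})$; the assignment $u_{pq} \mapsto u_{pq}$ will then extend to a surjective $*$-homomorphism $C(\Qut(\widetilde{X} \ast \widetilde{Y})) \to C(\Qut(X[Y]))$, which is what a quantum subgroup inclusion requires.

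The complete edge-coloured graph $\widetilde{X} \ast \widetilde{Y}$ carries up to four colours on the pairs in $V(X[Y]) \times V(X[Y])$: inner edges, inner non-edges, outer edges, and outer non-edges. Since $u$ already commutes with $A_{X[Y]}$, the relations $u_{pp'} u_{qq'} = 0$ hold whenever exactly one of $pq$, $p'q'$ is an edge of $X[Y]$. The additional relations that must be checked are therefore of the form
\[ u_{pp'} u_{qq'} = 0 \qquad \text{whenever $pq$ is inner and $p'q'$ is outer, or vice versa,} \]
both in the edge and in the non-edge case.

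This is exactly what the combinatorial work of \autoref{sec:Weis-Lem} delivers. Under hypotheses (i) and (ii), \autoref{weis-lem-sabi} asserts that the Weisfeiler--Leman algorithm applied to $X[Y]$ \emph{strongly} distinguishes every inner (non-)edge from every outer (non-)edge. Unpacking the definition of strong distinction, this means that for any orientation $(p,q)$ of an inner (non-)edge and any orientation $(p',q')$ of an outer (non-)edge, the stable colouring satisfies $\overline{c}(p,q) \neq \overline{c}(p',q')$; equivalently, the pair $(p,q)$ and the pair $(p',q')$ are distinguished by Weisfeiler--Leman. Invoking \autoref{lem:weis-lem-orbital}, we conclude $u_{pp'} u_{qq'} = 0$, which is precisely the desired relation.

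I do not expect any genuine obstacle at this stage: once \autoref{weis-lem-sabi} is in hand, the present lemma is a bookkeeping translation from the Weisfeiler--Leman side to the $C^*$-algebraic side. The one point that needs a moment of care is the use of \emph{strong} distinction as opposed to ordinary distinction: because a priori $u_{pp'} u_{qq'} = 0$ is not symmetric in the two arguments, we must know that $\overline{c}$ separates the colour classes for every pairing of orientations of the two edges, and strong distinction is exactly the statement guaranteeing this uniformly.
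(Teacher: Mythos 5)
Your proposal is correct and follows essentially the same route as the paper: both directions are handled by citing the preceding discussion for the easy inclusion and by combining \autoref{weis-lem-sabi} with \autoref{lem:weis-lem-orbital} to obtain the vanishing relations $u_{pp'}u_{qq'}=0$ for mixed inner/outer pairs, which shows the fundamental representation of $\Qut(X[Y])$ satisfies the defining relations of $\Qut(\widetilde{X}\ast\widetilde{Y})$. Your explicit remark on why \emph{strong} distinction (rather than mere distinction) is needed to cover all orientations is a point the paper leaves implicit, but it is the same argument.
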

\begin{proof}
	We shall show that $\Qut(\widetilde{X}[\widetilde{Y}]) = \Qut(\widetilde{X} \wr_* \widetilde{Y})$. The result then follows from the discussion preceding this lemma. Let $[u_{ij}]_{i,j}$ denote the fundamental representation of $\Qut(\widetilde{X}[\widetilde{Y}])$ and $[v_{ij}]_{i,j}$ denote the fundamental representation of $\Qut(\widetilde{X} \ast \widetilde{Y})$. It follows from the discussion preceding this lemma that $[v_{ij}]_{i,j}$ satisfies all the relations of \eqref{eq:qaut_def}, \eqref{eq:cqaut_def} for $\widetilde{X}[\widetilde{Y}]$. Hence, $u_{ij} \mapsto v_{ij}$ defines a $*$\nobreakdash-homomorphism.
	
	It follows from the definition of $\Qut(\widetilde{X} \ast \widetilde{Y})$ that if one of $ik$ and $j\ell$ is an inner edge and the other is an outer edge, then $v_{ij}v_{k\ell} = 0$. It is not immediately clear if an analogous condition holds true for $[u_{ij}]_{i,j}$. However, it follows from \autoref{weis-lem-sabi} and \autoref{lem:weis-lem-orbital} that if one of $ik$ and $j\ell$ is an inner edge and the other is an outer edge, then $(i,k)$ and $(j,\ell)$ are in different orbitals of $\Qut(\widetilde{X}[\widetilde{Y}])$, so that $u_{ij}u_{k\ell} = 0$. Proceeding similarly, one can show that $[u_{ij}]_{i,j}$ satisfies the conditions \eqref{eq:qaut_def}, \eqref{eq:cqaut_def} for $\widetilde{X} \ast \widetilde{Y}$. Hence, $v_{ij} \mapsto u_{ij}$ defines a $*$\nobreakdash-homomorphism, so $u_{ij} \mapsto v_{ij}$ defines an isomorphism and $\Qut(\widetilde{X}[\widetilde{Y}]) = \Qut(\widetilde{X} \wr_* \widetilde{Y})$.
\end{proof}

Now, we give a proof of the main result.

\begin{proof}[Proof of \autoref{main-result}]
	Let $X$ and $Y$ be any graphs. First, we note that if $\Aut(X[Y]) \neq \Aut(Y)\wr \Aut(X)$, then $\Qut(X[Y]) \neq \Qut(Y) \wr_* \Qut(X)$. Indeed, if $\Aut(X[Y]) \neq \Aut(Y)\wr \Aut(X)$ then there is an automorphism $\varphi \in \Aut(X[Y])$ that either maps an inner edge $(w, p_y)(w, q_y)$ to an outer edge $(p_x',p_y')(q_x',q_y')$, or an inner non-edge $(w, p_y)(w, q_y)$ to an outer edge $(p_x',p_y')(q_x',q_y')$ respectively. In particular, if $W = [W_{p,q}]_{p,q \in V(X[Y])}$ is the fundamental representation of $\Qut(X[Y])$, then $W_{(w,p_y)(p_x',q_x')}W_{(w, q_y)(q_x', q_y')} \neq 0$. However, if $w = [w_{p,q}]_{p,q \in V(X[Y])}$ is the fundamental representation of $\Qut(Y) \wr_* \Qut(X)$, then $w_{(w,p_y)(p_x',q_x')}w_{(w, q_y)(q_x', q_y')} = 0$, so that $\Qut(X[Y]) \neq \Qut(Y) \wr_* \Qut(X)$. Hence, the ``only if" part follows from \autoref{Sabidussi}. The ``if" part follows from \autoref{free-prod}, \autoref{lem:wre-and-free} and the discussion preceding \autoref{lem:wre-and-free}.
\end{proof}

By applying \autoref{main-result}, we can now compute the quantum automorphism groups of several graphs that were previously unknown. In particular, we can now determine the quantum automorphism group of the lexicographic products of non-regular graphs that satisfy the conditions of Sabidussi's theorem. As an example, let $K_{1,n}$ denote the star with $n+1$ vertices. It follows from the results of \cite{Dobben-Kar-Roberson-Schmidt-Zeman} that $\Qut(K_{1,n}) = \mathbb{S}_n^+$. Clearly, $K_{1,n}$ is connected for each $n \in \mathbb{N}$, and it can be verified that $\overline{K_{1,m}}$ has no twins for each $m \in \mathbb{N}$. Hence, by \autoref{main-result} we have that $\Qut(K_{1,m}[K_{1,n}]) = \mathbb{S}_n^+\wr_* \mathbb{S}_m^+$.

\section{Quantum automorphism group of disjoint union of graphs}
\label{sec:disjoint}
In this section, we give a complete characterization of the quantum automorphism group of disjoint unions of connected graphs. Some special cases are previously known:
\begin{itemize}
	\item The quantum automorphism group of a disjoint union of pairwise non-quantum isomorphic connected graphs $\{X_i\}_{i \in [n]}$ is given by the free product of the quantum automorphism groups of the individual graphs, $\bigast_{i=1}^n \Qut(X_i)$. In fact, \cite[Lemma 6.4]{Dobben-Kar-Roberson-Schmidt-Zeman} shows the stronger result that if $\{X_i\}_{i=1}^n$ are any graphs such that no connected component of $X_i$ is quantum isomorphic to a connected component of $X_j$, if $i \neq j$, then $\Qut(\bigsqcup_{i=1}^n X_i) = \bigast_{i=1}^n \Qut(X_i)$.
	\item The quantum automorphism group of a disjoint union of $n$ isomorphic copies of a connected graph $X$ is given by the free wreath product of the quantum automorphism group of $X$ by the quantum symmetric group, $\mathbb{S}_n^+$, $\Qut(X) \wr_* \mathbb{S}_n^+$. See \cite{Banica-Bichon-free-product, Bichon-free-wreath-product} for a proof.
\end{itemize}

From these cases, the disjoint union of any finite collection of connected graphs, where any two graphs are either isomorphic or not quantum isomorphic, can be calculated in terms of free products and free wreath products of the quantum automorphism groups of the individual graphs. See the proof of \cite[Theorem 1.1]{Dobben-Kar-Roberson-Schmidt-Zeman} for more details.

It should be noted that the quantum automorphism group of the disjoint union of non-isomorphic quantum isomorphic graphs cannot always be expressed in terms of the quantum automorphism groups of the individual graphs. Indeed, we may construct a pair of non-isomorphic quantum isomorphic graphs $X, Y$ such that $\card{E(Y)} \neq \card{E(\overline{X})}$, which in particular implies that $\overline{X} \not\cong_q Y$.
Then, the quantum automorphism group of $\overline{X} \sqcup Y$ is $\Qut(\overline{X}) \ast \Qut(Y) = \Qut(X) \ast \Qut(Y)$, whereas the quantum automorphism group of $X \sqcup Y$ is not $\Qut(X) \ast \Qut(Y)$, as we shall see. Hence, the characterization we give is not a ``product" of quantum groups, and it requires some additional information about the graphs. In this section, we give a complete characterization of the quantum automorphism group of the disjoint union of graphs. First, we state a lemma that we need.

\begin{lem}[{\cite[Lemma 3.2.2]{Schmidt-dissertation}}]
	\label{lem:dist-from-vert}
	Let $X$ be a graph and let $[u_{xx'}]_{x,x' \in V(X)}$ be the fundamental representation of $\Qut(X)$.
	If the distance between $x$ and $y$ is different from the distance between $x'$ and $y'$, then $u_{xx'}u_{yy'} = 0$.
\end{lem}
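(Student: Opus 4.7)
The plan is to exploit the intertwining relation $A_X u = u A_X$ (which is exactly what \eqref{eq:qaut_def} encodes), combined with the magic unitary relations, to derive an equation of the form $(A_X^k)_{xy}\, u_{xx'} u_{yy'} = (A_X^k)_{x'y'}\, u_{xx'} u_{yy'}$ for every $k \geq 0$. Once this is in hand, choosing $k$ to be the smaller of the two distances will finish the argument, since powers of $A_X$ detect distances: $(A_X^k)_{pq} > 0$ exactly when there is a walk of length $k$ from $p$ to $q$, and in particular $(A_X^k)_{pq} = 0$ whenever $k < d(p,q)$.

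Concretely, first I would iterate the intertwining relation to $A_X^k u = u A_X^k$ for all $k \geq 0$, and read off the $(x, y')$-entry:
\[ \sum_{z \in V(X)} (A_X^k)_{xz}\, u_{zy'} \;=\; \sum_{z' \in V(X)} u_{xz'}\, (A_X^k)_{z'y'}. \]
Next I would left-multiply by $u_{xx'}$ and right-multiply by $u_{yy'}$. On the right-hand side, $u_{xx'}$ and $u_{xz'}$ lie in the same row of the magic unitary $u$, so $u_{xx'} u_{xz'} = \delta_{x'z'} u_{xx'}$, collapsing the sum to $(A_X^k)_{x'y'}\, u_{xx'} u_{yy'}$. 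On the left-hand side, $u_{zy'}$ and $u_{yy'}$ lie in the same column, so $u_{zy'} u_{yy'} = \delta_{zy} u_{yy'}$, collapsing the sum to $(A_X^k)_{xy}\, u_{xx'} u_{yy'}$. The resulting identity
\[ \bigl[(A_X^k)_{xy} - (A_X^k)_{x'y'}\bigr]\, u_{xx'} u_{yy'} = 0 \]
holds for every $k$.

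Finally, I would specialise to $k = \min\{d(x,y), d(x',y')\}$; by symmetry assume $k = d(x,y) < d(x',y')$. Then $(A_X^k)_{xy} \geq 1$ while $(A_X^k)_{x'y'} = 0$ (since no walk of length $k$ can connect $x'$ and $y'$), so the scalar in front of $u_{xx'} u_{yy'}$ is a nonzero integer, forcing $u_{xx'} u_{yy'} = 0$. There is no serious obstacle here: the only point that requires care is the bookkeeping with the magic unitary identities, ensuring that one multiplies from the correct side so that the row/column cancellation applies. As a sanity check, the same conclusion can alternatively be obtained via the Weisfeiler--Leman framework already set up in the paper, since the numbers $(A_X^k)_{xy}$ appear as counts of monochromatic walks in the sense of \autoref{lem:longer-paths}, and \autoref{lem:weis-lem-orbital} then yields $u_{xx'} u_{yy'} = 0$.
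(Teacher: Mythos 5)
Your argument is correct: iterating the intertwining relation to $A_X^k u = u A_X^k$, reading off the $(x,y')$-entry, and using the row/column orthogonality of the magic unitary does yield $\bigl[(A_X^k)_{xy} - (A_X^k)_{x'y'}\bigr]\,u_{xx'}u_{yy'} = 0$, after which taking $k$ to be the smaller of the two distances (which also handles the case where one distance is infinite) forces $u_{xx'}u_{yy'} = 0$. The paper gives no proof of this lemma, citing it from Schmidt's dissertation, where the argument is essentially this same computation, so your approach matches the source.
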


\begin{lem}
	\label{lem:disj-uni}
	Let $\{X_i\}_{i=1}^n$ be a family of connected graphs, let $X \coloneqq \bigsqcup_{i=1}^n X_i$, and let $[w_{pq}]_{p,q \in V(X)}$ be the fundamental representation of $\Qut(X)$.
	For $i, j \in [n]$, let $B_{ij}$ denote the submatrix of $[w_{pq}]_{p,q \in V(X)}$ whose rows and columns are indexed by $V(X_i)$ and $V(X_j)$; that is, $B_{ij} \coloneqq [w_{pq}]_{p \in V(X_i), q \in V(X_j)}$.
	Then:
	\begin{enumerate}[label = \textup(\alph*\textup)]
		\item\label{itm:disj:common-sum} For fixed $i,j \in [n]$, the rows and columns of $B_{ij}$ all sum to the same element.
		\item\label{itm:disj:magic} For $i,j \in [n]$, let $e_{ij}$ denote the common row and column sum from \ref{itm:disj:common-sum}. Then $[e_{ij}]_{i,j \in [n]}$ is a magic unitary, and $e_{ij}w_{pq} = w_{pq}$ for all $p \in V(X_i)$ and $q \in V(X_j)$.
		\item\label{itm:disj:qiso} For all $i,j \in [n]$, we have $A_{X_i} B_{ij} = B_{ij}A_{X_j}$.
	\end{enumerate}
\end{lem}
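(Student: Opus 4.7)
The key input is \autoref{lem:dist-from-vert}: since each $X_i$ is connected, two vertices of $X$ lie at finite distance precisely when they belong to a common $X_k$. This immediately yields the orthogonality $w_{pq}w_{p'q'} = 0$ whenever the pairs $(p,p')$ and $(q,q')$ disagree on whether they sit in a common component. My plan is to propagate this single vanishing statement through the magic-unitary axioms to extract all three parts of the lemma.

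For part (a), fix $p\in V(X_i)$ and $q\in V(X_j)$, and consider the $p$-th row sum $\rho := \sum_{q'\in V(X_j)} w_{pq'}$ and the $q$-th column sum $\gamma := \sum_{p'\in V(X_i)} w_{p'q}$ of $B_{ij}$. Multiplying $w_{pq'}$ on the right by $1 = \sum_{p'\in V(X)} w_{p'q}$ and discarding the vanishing terms (those with $p'\notin V(X_i)$) gives $w_{pq'} = \sum_{p'\in V(X_i)} w_{pq'}w_{p'q}$ for every $q'\in V(X_j)$; summing over such $q'$ yields $\rho = \rho\gamma$. A symmetric calculation, multiplying $w_{p'q}$ by $1 = \sum_{q'\in V(X)} w_{pq'}$ and summing over $p'\in V(X_i)$, gives $\gamma = \gamma\rho$. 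Both $\rho$ and $\gamma$ are self-adjoint projections (sums of pairwise orthogonal row/column entries of the magic unitary $[w_{pq}]$), so taking adjoints of these two identities we also obtain $\rho = \gamma\rho$ and $\gamma = \rho\gamma$; hence $\rho = \rho\gamma = \gamma$. Since this holds for every choice of $p$ and $q$, all row and column sums of $B_{ij}$ coincide with a single element, which we call $e_{ij}$.

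Parts (b) and (c) then follow without much difficulty. Each $e_{ij}$ is a sum of pairwise orthogonal projections, hence a projection; moreover $\sum_j e_{ij} = \sum_{q'\in V(X)} w_{pq'} = 1$ and symmetrically $\sum_i e_{ij} = 1$, so $[e_{ij}]$ is a magic unitary. The relation $e_{ij}w_{pq} = w_{pq}$ for $p\in V(X_i)$, $q\in V(X_j)$ reduces, after expanding $e_{ij}$ along the row indexed by $p$, to the standard row orthogonality $w_{pq'}w_{pq} = \delta_{q,q'}w_{pq}$ of a magic unitary. For (c), the adjacency matrix $A_X$ is block-diagonal with blocks $A_{X_i}$, so the intertwiner relation $A_X w = w A_X$, restricted to rows indexed by $V(X_i)$ and columns indexed by $V(X_j)$, reads exactly $A_{X_i} B_{ij} = B_{ij} A_{X_j}$.

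The main obstacle I anticipate is identifying the right expression of $1$ to insert in part (a): writing it as $\sum_{p'\in V(X)} w_{p'q}$ (rather than $\sum_{q'\in V(X)} w_{pq'}$) is what allows the distance-based orthogonality to collapse the sum onto $V(X_i)$. Once this step is in place, the projection manipulation forces $\rho = \gamma$ with essentially no further input.
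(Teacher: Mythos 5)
Your proposal is correct and follows essentially the same route as the paper: the distance lemma (\autoref{lem:dist-from-vert}) supplies the orthogonality, inserting $1$ as a column sum of the magic unitary collapses the cross terms, and parts (b) and (c) follow from the magic-unitary axioms and the block-diagonal form of $A_X$ exactly as in the paper. The only (cosmetic) difference is in (a): the paper obtains $\rho = \gamma$ in a single chain of equalities by also extending the row sum to all of $V(X)$, whereas you derive $\rho = \rho\gamma$ and $\gamma = \gamma\rho$ separately and then invoke self-adjointness of the two projections.
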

\begin{proof}
	\ref{itm:disj:common-sum} By \autoref{lem:dist-from-vert}, if $p,p' \in V(X_i)$, $q \in V(X_j)$ and $q' \notin V(X_j)$, then $w_{pq}w_{p'q'} = 0$.
	Hence, for all $p \in X_i$ and all $q' \in X_j$ we have
	\begin{align*}
		\sum_{q \in V(X_j)} w_{pq} & = \Bigg(\sum_{q \in V(X_j)}w_{pq}\Bigg)\Bigg(\sum_{p' \in V(X)} w_{p'q'}\Bigg) \\
		& = \Bigg(\sum_{q \in V(X_j)} w_{pq}\Bigg)\Bigg(\sum_{p' \in V(X_i)} w_{p'q'}\Bigg) \\
		& = \Bigg(\sum_{q \in V(X)} w_{pq}\Bigg)\Bigg(\sum_{p' \in V(X_i)} w_{p'q'}\Bigg) \\
		& = \sum_{p' \in V(X_i)} w_{p'q'}.
	\end{align*}
	This shows that every row of $B_{ij}$ sums to the same value as every column of $B_{ij}$.
	It follows that the rows and columns of $B_{ij}$ all sum to the same value.
	
	\ref{itm:disj:magic}
	Since $e_{ij}$ is the sum of a number of pairwise orthogonal self-adjoint projections, $e_{ij}$ is a self-adjoint projection as well.
	Furthermore, clearly the row and column sums of $[e_{ij}]_{i,j \in [n]}$ correspond to row and column sums of $[w_{pq}]_{p,q \in V(X)}$, which evaluate to $1$ since $[w_{pq}]_{p,q \in V(X)}$ is a magic unitary.
	This shows that $[e_{ij}]_{i,j \in [n]}$ is a magic unitary as well.
	Finally, note that, for $p \in V(X_i)$ and $q \in V(X_j)$, we have $e_{ij} w_{pq} = (\sum_{q' \in V(X_j)} w_{pq'})w_{pq} = w_{pq}$.
	
	\ref{itm:disj:qiso} The fundamental representation $[w_{pq}]_{p,q \in V(X)}$ of $\Qut(X)$ commutes with $A_X$.
	Note that the first of these two matrices is in block form $[B_{ij}]_{i,j \in [n]}$ and the second is in block diagonal form $A_X = \bigoplus_{i=1}^n A_{X_i}$.
	By comparing the $(i,j)$-th blocks of the block matrices $B A_X$ and $A_X B$, we see that $A_{X_i} B_{ij} = B_{ij} A_{X_j}$.
\end{proof}

\begin{thm}
	\label{thm:disj-uni}
	Let $\{X_i\}_{i=1}^n$ be a family of connected graphs, let $\{v_{pq}\}_{p \in V(X_i), q \in V(X_j)}$ be the generators of $\QIso(X_i, X_j)$, and let $[u_{ij}]_{i,j \in [n]}$ be the magic unitary of $\mathbb{S}_n^+$.
	Then the quantum automorphism group of the disjoint union $X \coloneqq \bigsqcup_{i=1}^n X_i$ is the quantum permutation group given by the $C^*$\nobreakdash-algebra
	\begin{align}
		\label{eq:disj_uni_expr}
		&\Big( \, \oversortoftilde{\!\!\Big(\bigast_{i,j=1}^n \QIso(X_i, X_j) \Big)\!} \, \ast_{\mathbb{C}} \: C(\mathbb{S}_n^+) \Big) \Big/ \Big\langle u_{ij} = 1_{\QIso(X_i, X_j)} \: \Big| \: i,j \in [n]\Big\rangle
	\end{align}
	and the fundamental representation given by the block matrix $[C_{ij}]_{i,j \in [n]}$, where $C_{ij} \coloneqq [v_{pq}]_{p \in V(X_i), q \in V(X_j)}$.
\end{thm}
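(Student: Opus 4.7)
The plan is to exhibit mutually inverse unital $*$-homomorphisms $\Phi$ and $\Psi$ between $C(\Qut(X))$ and the universal $C^*$-algebra $\mathcal{U}$ appearing on the right-hand side of \eqref{eq:disj_uni_expr}. Once these are in place, the isomorphism is determined on generators by $w_{pq} \leftrightarrow v_{pq}$, so the standard comultiplication on $C(\Qut(X))$ transports to $\Delta(v_{pq}) = \sum_{r} v_{pr} \otimes v_{rq}$ on $\mathcal{U}$, making $(\mathcal{U}, [v_{pq}])$ a quantum permutation group with the stated fundamental representation $[C_{ij}]_{i,j}$.

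For the forward direction $\Phi \colon \mathcal{U} \to C(\Qut(X))$, I will assemble $\Phi$ out of ingredients supplied by \autoref{lem:disj-uni}. Part~\ref{itm:disj:magic} says that $[e_{ij}]_{i,j\in[n]}$ is a magic unitary, giving a unital $*$-homomorphism $C(\mathbb{S}_n^+) \to C(\Qut(X))$ with $u_{ij} \mapsto e_{ij}$. Separately, for each pair $(i,j)$, the identity $e_{ij}w_{pq} = w_{pq}$ from part~\ref{itm:disj:magic} places each $w_{pq}$ inside the corner $e_{ij}\,C(\Qut(X))\,e_{ij}$, whose unit is $e_{ij}$; combined with the row/column sums of part~\ref{itm:disj:common-sum} and the intertwining relation of part~\ref{itm:disj:qiso}, this shows that inside this corner, the block $B_{ij}$ satisfies the defining relations of $\QIso(X_i, X_j)$. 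The universal property of $\QIso(X_i, X_j)$ then yields a $*$-homomorphism $\psi_{ij} \colon \QIso(X_i, X_j) \to C(\Qut(X))$ with $v_{pq} \mapsto w_{pq}$ and $1_{\QIso(X_i, X_j)} \mapsto e_{ij}$. Taking the non-unital free product of the $\psi_{ij}$, extending to the unitization by sending the new unit to $1$, and amalgamating with $C(\mathbb{S}_n^+)$ over $\mathbb{C}$ produces a unital map from $\widetilde{\bigast_{i,j} \QIso(X_i, X_j)} \ast_{\mathbb{C}} C(\mathbb{S}_n^+)$ under which both $u_{ij}$ and $1_{\QIso(X_i, X_j)}$ are sent to $e_{ij}$, so the quotient relations of \eqref{eq:disj_uni_expr} are satisfied and $\Phi$ descends to $\mathcal{U}$.

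For the reverse direction $\Psi \colon C(\Qut(X)) \to \mathcal{U}$, I will invoke the universal property of $C(\Qut(X))$ by checking that the block matrix $[C_{ij}]_{i,j}$, viewed as a family $[v_{pq}]_{p,q \in V(X)}$ in $\mathcal{U}$, is a magic unitary commuting with $A_X = \bigoplus_i A_{X_i}$. Each $v_{pq}$ is a self-adjoint idempotent by the $\QIso$ relations. For $p \in V(X_i)$ the row sum is $\sum_{j} \sum_{q \in V(X_j)} v_{pq} = \sum_j 1_{\QIso(X_i, X_j)} = \sum_j u_{ij} = 1$, using the $\QIso$ row sum, the quotient relation, and magic unitarity of $[u_{ij}]$. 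Orthogonality within a single block follows from the $\QIso$ relations; for $q \in V(X_j)$ and $q' \in V(X_{j'})$ with $j \neq j'$, I use $v_{pq} = v_{pq}\,1_{\QIso(X_i, X_j)} = v_{pq}\,u_{ij}$ together with $u_{ij}u_{ij'} = 0$ to obtain $v_{pq} v_{pq'} = v_{pq} u_{ij'} v_{pq'} = 0$. Columns are treated analogously, and $A_X [v_{pq}] = [v_{pq}] A_X$ follows block by block from the $\QIso$ intertwining relation $A_{X_i} C_{ij} = C_{ij} A_{X_j}$.

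Finally, $\Phi$ and $\Psi$ are mutually inverse on generators: $\Phi \circ \Psi$ fixes each $w_{pq}$, while $\Psi \circ \Phi$ fixes each $v_{pq}$ and, for any fixed $p_0 \in V(X_i)$, sends $u_{ij} \mapsto e_{ij} = \sum_{q \in V(X_j)} w_{p_0 q} \mapsto \sum_{q \in V(X_j)} v_{p_0 q} = 1_{\QIso(X_i,X_j)} = u_{ij}$. I expect the main source of friction to be the bookkeeping among the various universal constructions (non-unital free product, unitization, amalgamated free product, quotient), and in particular articulating why the quotient relation $u_{ij} = 1_{\QIso(X_i,X_j)}$ is precisely what is needed to fuse the local $\QIso$ structure on each block with the global magic unitary $[e_{ij}]$ from \autoref{lem:disj-uni}.
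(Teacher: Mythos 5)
Your proposal is correct and takes essentially the same route as the paper's proof: your $\Psi$ is the paper's map $\varphi$ (checking that $[C_{ij}]_{i,j}$ assembles into a magic unitary commuting with $A_X=\bigoplus_i A_{X_i}$ and invoking the universal property of $C(\Qut(X))$), and your $\Phi$ is the paper's $\psi_4$ (built from \autoref{lem:disj-uni} via $v_{pq}\mapsto w_{pq}$ and $u_{ij}\mapsto e_{ij}$, descending to the quotient because $e_{ij}$ is the unit of the block subalgebra generated by $\{w_{pq} : p\in V(X_i),\, q\in V(X_j)\}$), with the inverses checked on generators exactly as you do. The only difference is cosmetic: your explicit cross-block orthogonality check is not needed, since projections in a row summing to $1$ are automatically pairwise orthogonal.
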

Here, the brace over the free product $\bigast_{i,j=1}^n \QIso(X_i, X_j)$ refers to the unitization.
\begin{proof}
	Let $\mathcal A$ be the $C^*$\nobreakdash-algebra given by \eqref{eq:disj_uni_expr}, and let $W = [W_{pq}]_{p,q \in V(X)}$ denote the matrix obtained from $[C_{ij}]_{i,j \in [n]}$ by forgetting the block structure.
	Furthermore, let $[w_{pq}]_{p,q \in V(X)}$ denote the fundamental representation of $\Qut(X)$.
	
	Note that $[W_{pq}]_{p,q \in V(X)}$ is a magic unitary: the entries $W_{pq}$ are self-adjoint projections, and we have $\sum_{r \in V(X)} W_{pr} = \sum_{j=1}^n \sum_{r \in V(X_j)} W_{pr} = \sum_{j=1}^n u_{ij} = 1$ and likewise $\sum_{r \in V(X)} W_{rq} = 1$, for all $p,q \in V(X)$.
	Moreover, since $A_X$ is in block diagonal form $A_X = \bigoplus_{i=1}^n A_{X_i}$, and since we have $A_{X_i} C_{ij} = C_{ij} A_{X_j}$ for all $i,j \in [n]$, it follows that $WA_X = A_XW$.
	Therefore, by the universal property of $C(\Qut(X))$, there is a unique $*$\nobreakdash-homomorphism $\varphi : C(\Qut(X)) \to \mathcal A$ such that $\varphi(w_{pq}) = W_{pq}$ for all $p,q \in V(X)$.
	
	We now construct the inverse $\ast$-homomorphism.
	For all $i,j \in [n]$, it follows from the universal property of $\QIso(X_i, X_j)$ and \autoref{lem:disj-uni} that the map $v_{pq} \mapsto w_{pq}$ for all $p \in V(X_i), q \in V(X_j)$ defines a $*$\nobreakdash-homomorphism $\QIso(X_i, X_j) \to C(\Qut(X))$.
	By the universal property of the free product and the unitization, this extends to a unital $*$\nobreakdash-homomorphism
	\[ \psi_1 : \oversortoftilde{(\textstyle\bigast_{i,j=1}^n \QIso(X_i, X_j) )} \to C(\Qut(X)), \qquad v_{pq} \mapsto w_{pq}. \]
	Similarly, by the universal property of $C(\mathbb{S}_n^+)$ and \autoref{lem:disj-uni}, the map $u_{ij} \mapsto e_{ij}$ defines a unital $*$\nobreakdash-homomorphism $\psi_2 : C(\mathbb{S}_n^+) \to C(\Qut(X))$, where $[e_{ij}]_{i,j \in [n]}$ is the magic unitary in $C(\Qut(X))$ from \myautoref{lem:disj-uni}{itm:disj:magic}.
	Hence, by the universal property of the free product amalgamated over $\C$, there is a unique unital $*$\nobreakdash-homomorphism
	\[ \psi_3 : \oversortoftilde{(\textstyle\bigast_{i,j=1}^n \QIso(X_i, X_j) )} \ast_{\mathbb{C}} \: C(\mathbb{S}_n^+) \to C(\Qut(X)) \]
	extending $\psi_1$ and $\psi_2$.
	Finally, note that $\psi_3$ factors via $\mathcal A$, since it follows from \myautoref{lem:disj-uni}{itm:disj:magic} that $e_{ij}$ is the unit of the $C^*$\nobreakdash-subalgebra of $C(\Qut(X))$ generated by $\{w_{pq} \mid p \in V(X_i), q \in V(X_j)\}$, for all $i,j \in [n]$.
	Thus, there is a $*$\nobreakdash-homomorphism $\psi_4 : \mathcal A \to C(\Qut(X))$ such that $\psi_4(W_{pq}) = w_{pq}$ for all $p,q \in V(X)$.
	
	Since $\mathcal A$ is generated by $\{W_{pq} \mid p,q \in V(X)\}$, it follows that $\varphi \circ \psi_4 = \id_{\mathcal A}$. Likewise, we have $\psi_4 \circ \varphi = \id_{C(\Qut(X))}$, so we conclude that $\mathcal A \cong C(\Qut(X))$.
	Therefore, $\mathcal A$ equipped with the fundamental representation $W$ is a quantum permutation group that is isomorphic to $\Qut(X)$.
\end{proof}

\begin{note*}
	It is not too difficult to see that the expression in \eqref{eq:disj_uni_expr} collapses to the known results in the two previously known cases. Indeed, if all the graphs $X_i$ are pairwise not quantum isomorphic, then if $i \neq j$ we have $\QIso(X_i, X_j) = 0$, and therefore $e_{ij} = 0$. It follows that $e_{ii} = 1$ for all $i \in [n]$. Moreover, we have that $\QIso(X_i, X_i) = \Qut(X_i)$. Hence, the $C^*$\nobreakdash-algebra and the fundamental representation in \autoref{thm:disj-uni} are the same as that of the free product of the quantum automorphism groups of the graphs $X_i$.
	
	Similarly, if all the graphs $X_i$ are isomorphic to a graph $X$, then each of the algebras $\QIso(X_i, X_j)$ is isomorphic to $\Qut(X)$. Using this, it is not too difficult to see that the algebra and the fundamental representation in \autoref{thm:disj-uni} are the same as that of the free wreath product $\Qut(X)\wr_* \mathbb{S}_n^+$.
\end{note*}

\section{Lexicographic product of quantum vertex transitive graphs}
\label{sec:vxtransitive}

In this section, we shall focus on lexicographic products of quantum vertex transitive graphs. Recall that a graph $X$ is said to be \emph{vertex transitive} if for every two vertices $x,x' \in V(X)$ there is an automorphism $\varphi$ of $X$ such that $\varphi(x) = x'$. In \cite{dobson_automorphism_2009}, a complete characterization of the automorphism groups of lexicographic products of vertex transitive digraphs is given. In this section, we prove quantum generalizations of this result for vertex transitive and quantum vertex transitive graphs. Our approach is based on the proofs given in \cite[Section 5.1]{Dobson_Malnic_Marusic_2022}.

If $X$ is vertex transitive, the action of $\Aut(X)$ on $V(X)$ induces a single orbit. Similarly, a graph $X$ is said to be \emph{quantum vertex transitive} if the action of $\Qut(X)$ on $V(X)$ induces a single orbit. In other words, for all ordered pairs $(x,x') \in V(X) \times V(X)$, we have $u_{xx'} \neq 0$.

Let $X$ and $Y$ be quantum vertex transitive graphs. Let us assume that $\Qut(X[Y]) \neq \Qut(Y) \wr_* \Qut(X)$. Then, by \autoref{main-result}:
\begin{itemize}
	\item either $Y$ is disconnected and $X$ has twins, or
	\item $\overline{Y}$ is disconnected and $\overline{X}$ has twins.
\end{itemize}

First, let us assume that $Y$ is vertex transitive. It is easy to see that if $Y$ is disconnected, then all its connected components are vertex transitive and isomorphic to each other. Hence, we have the following result:

\begin{lem}
	\label{vert-tran-disj}
	Let $Y$ be a vertex transitive graph. Then, there exists a connected vertex transitive graph $Y'$ and $\beta \in \mathbb{N}$ such that $Y = \overline{K_\beta}[Y']$.\qed
\end{lem}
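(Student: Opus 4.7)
The plan is to handle the connected and disconnected cases separately. If $Y$ is connected, we simply take $\beta = 1$ and $Y' = Y$, since any graph $Z$ satisfies $Z = \overline{K_1}[Z]$. So the substance of the lemma is in the case where $Y$ is disconnected.

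Assume $Y$ is disconnected, and let $Y_1, \ldots, Y_\beta$ denote its connected components. The first step is to observe that any automorphism $\varphi \in \Aut(Y)$ permutes the components, because connected components are preserved by graph isomorphisms. I would then use vertex transitivity in two ways:
\begin{enumerate}[label=(\alph*)]
  \item \emph{All components are isomorphic.} Given any two components $Y_i$ and $Y_j$, pick vertices $u \in V(Y_i)$ and $v \in V(Y_j)$. By vertex transitivity there is $\varphi \in \Aut(Y)$ with $\varphi(u) = v$; since $\varphi$ must send the component containing $u$ to the component containing $v$, we get $\varphi(Y_i) = Y_j$, so $Y_i \cong Y_j$.
  \item \emph{Each component is vertex transitive.} Fix a component, say $Y_1$, and let $u, v \in V(Y_1)$. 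Choose $\varphi \in \Aut(Y)$ with $\varphi(u) = v$; then $\varphi(Y_1)$ is the component containing $v$, which is $Y_1$ itself. Hence $\varphi\!\restriction_{V(Y_1)}$ is an automorphism of $Y_1$ sending $u$ to $v$.
\end{enumerate}

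Setting $Y' \coloneqq Y_1$, we obtain a connected vertex transitive graph such that $Y$ is isomorphic to the disjoint union of $\beta$ copies of $Y'$. The final step is to identify this disjoint union with the lexicographic product $\overline{K_\beta}[Y']$: unpacking the definition of the lexicographic product, the edge set of $\overline{K_\beta}[Y']$ is $\{(x,y)(x,y') : x \in V(\overline{K_\beta}),\, yy' \in E(Y')\}$ (the ``outer'' edges vanish because $\overline{K_\beta}$ has no edges), which is exactly the disjoint union of $\beta$ copies of $Y'$ indexed by $V(\overline{K_\beta})$. There is no real obstacle here; the only subtlety is remembering to invoke vertex transitivity twice, once to conclude that the components are pairwise isomorphic and once to conclude that each one is itself vertex transitive.
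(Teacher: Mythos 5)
Your proof is correct and follows exactly the argument the paper has in mind: the paper states this lemma without proof, remarking only that "it is easy to see that if $Y$ is disconnected, then all its connected components are vertex transitive and isomorphic to each other," which is precisely what you establish via the observation that automorphisms permute components. Your write-up simply fills in the details the paper leaves to the reader, including the routine identification of a disjoint union of $\beta$ copies of $Y'$ with $\overline{K_\beta}[Y']$.
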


A similar result also holds true for quantum vertex transitive graphs. However, things are slightly more complicated as we can only say that the connected components are quantum isomorphic to each other. We note that it is possible for two non-isomorphic quantum vertex transitive graphs to be quantum isomorphic. Indeed, the pair of non-isomorphic, quantum isomorphic graphs in \cite{atserias_quantum_2019} are vertex transitive, and hence quantum vertex transitive.

\begin{lem}
	\label{q-vert-tran-disj}
	Let $Y$ be a disconnected quantum vertex transitive graph, and $Y_1,\ldots, Y_\beta$ be the connected components of $Y$. Then, $Y = \bigcup_{i=1}^\beta Y_i$ and the following hold true:
	\begin{enumerate}[label = \roman*.]
		\item Each connected component of $Y$ is quantum vertex transitive,
		\item The connected components of $Y$ are pairwise quantum isomorphic.
	\end{enumerate}
\end{lem}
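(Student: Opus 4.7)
My plan is to apply Lemma \ref{lem:disj-uni} directly. Let $[w_{pq}]_{p,q \in V(Y)}$ denote the fundamental representation of $\Qut(Y)$. By that lemma, for each pair $i,j \in [\beta]$ the rows and columns of the block $B_{ij} = [w_{pq}]_{p \in V(Y_i),\, q \in V(Y_j)}$ share a common sum $e_{ij} \in C(\Qut(Y))$, the matrix $[e_{ij}]_{i,j \in [\beta]}$ is a magic unitary, $e_{ij} w_{pq} = w_{pq}$ whenever $p \in V(Y_i)$ and $q \in V(Y_j)$ (and symmetrically $w_{pq} e_{ij} = w_{pq}$, since $e_{ij}$ is also the column sum of $B_{ij}$), and $A_{Y_i} B_{ij} = B_{ij} A_{Y_j}$. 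Since $Y$ is quantum vertex transitive, every entry $w_{pq}$ is nonzero; combined with $w_{pq} = e_{ij} w_{pq}$, this forces $e_{ij} \neq 0$ for all $i,j$. Hence the compression $\mathcal{A}_{ij} \coloneqq e_{ij} C(\Qut(Y)) e_{ij}$ is a nonzero unital $C^*$\nobreakdash-algebra with unit $e_{ij}$, and $B_{ij}$ is a magic unitary inside $\mathcal{A}_{ij}$.

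For part (ii), the pair $(\mathcal{A}_{ij}, B_{ij})$ is by definition a witness that $Y_i \cong_q Y_j$: $\mathcal{A}_{ij}$ is a nonzero unital $C^*$\nobreakdash-algebra, $B_{ij}$ is a magic unitary indexed by $V(Y_i) \times V(Y_j)$, and $A_{Y_i} B_{ij} = B_{ij} A_{Y_j}$. For part (i), I invoke the universal property of $C(\Qut(Y_i))$: because $B_{ii}$ is a magic unitary in the unital $C^*$\nobreakdash-algebra $\mathcal{A}_{ii}$ satisfying $A_{Y_i} B_{ii} = B_{ii} A_{Y_i}$, there is a unique unital $*$\nobreakdash-homomorphism $\pi : C(\Qut(Y_i)) \to \mathcal{A}_{ii}$ sending each generator $u_{pp'}$ of $C(\Qut(Y_i))$ to $w_{pp'}$. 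Since $w_{pp'} \neq 0$, necessarily $u_{pp'} \neq 0$ in $C(\Qut(Y_i))$, which is precisely the statement that $Y_i$ is quantum vertex transitive.

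I do not anticipate any substantive obstacle: the argument is entirely a consequence of the block structure from Section \ref{sec:disjoint} together with the standard $C^*$\nobreakdash-algebraic trick of compressing by a projection. The only point requiring care is the verification that each $e_{ij}$ is nonzero, which is exactly where the hypothesis of quantum vertex transitivity enters.
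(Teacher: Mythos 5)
Your proof is correct, and it takes a somewhat lighter route than the paper's. The paper invokes the full structure theorem for disjoint unions (\autoref{thm:disj-uni}): it identifies $C(\Qut(Y))$ with the quotient algebra built from the $\QIso(Y_i,Y_j)$ and $C(\mathbb{S}_\beta^+)$, reads off the factorization $w_{y_iy_j} = u_{ij}e_{y_iy_j}$ of each generator, and deduces from $w_{y_iy_j}\neq 0$ that the generators of $\QIso(Y_i,Y_j)$ (resp.\ of $C(\Qut(Y_i))$) are nonzero, which by the criterion from \cite{brannan_bigalois_2020} gives quantum isomorphism (resp.\ quantum vertex transitivity). You instead use only the block-structure \autoref{lem:disj-uni} and compress by the corner projections $e_{ij}$, exhibiting $(e_{ij}C(\Qut(Y))e_{ij},\,B_{ij})$ directly as a witness of $Y_i\cong_q Y_j$ per the definition, and obtaining part (i) from the universal property of $C(\Qut(Y_i))$ applied to the corner $\mathcal A_{ii}$ (the compression is exactly what makes the unit relations $\sum_{p'}w_{pp'}=e_{ii}$ legitimate). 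Your verification that $w_{pq}e_{ij}=w_{pq}$ via orthogonality of entries in a column of a magic unitary is correct, as is the observation that quantum vertex transitivity is precisely where $e_{ij}\neq 0$ is forced. The trade-off: the paper's argument is shorter given that \autoref{thm:disj-uni} has already been established, while yours is more self-contained, needing only the elementary lemma and a standard corner argument rather than the full presentation of $C(\Qut(Y))$.
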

\begin{proof}
	Let $[w_{yy'}]_{y,y' \in V(Y)}$ be the fundamental representation of $\Qut(Y)$. Since $Y$ is quantum vertex transitive, we have that $w_{yy'} \neq 0$ for all pairs $y, y' \in V(Y) \times V(Y)$. Let $y_i \in V(Y_i), y_j \in V(Y_j)$ be two vertices of $Y$, and let $[e_{y_iy_j}]_{y_i\in V(Y_i), y_j \in V(Y_j)}$ be the generators of $\QIso(Y_i, Y_j)$, and $[u_{ij}]_{i,j \in [\beta]}$ be the fundamental representation of $\mathbb{S}_{\beta}^+$. It follows from \autoref{thm:disj-uni} that $w_{y_iy_j} = u_{ij}e_{y_i y_j}$ (note that it makes sense to multiply $u_{ij}, \ e_{y_iy_j}$ even though they are in different $C^*$\nobreakdash-algebras because we are multiplying them in a quotient of the free product of the respective $C^*$\nobreakdash-algebras), so that $e_{y_iy_j} \neq 0$. In particular, $\QIso(Y_i, Y_j) \neq 0$, so that $Y_i$ and $Y_j$ are quantum isomorphic.
	
	Now, let $y,y' \in V(Y_i)$ be two vertices of $Y$ in the same connected component. Let $[v_{yy'}]_{y, y' \in V(Y_i)}$ denote the fundamental representation of $\Qut(Y_i)$. Once again, it follows from \autoref{thm:disj-uni} that $w_{yy'} = u_{ii}v_{yy'}$, so that $v_{yy'} \neq 0$. Since $y, y'$ were arbitrary, we see that $Y_i$ is quantum vertex transitive. This finishes the proof.
\end{proof}

Let $X$ be a graph. Define an equivalence relation on $V(X)$ by $x \sim x'$ if and only if $x$ and $x'$ are twins in $X$. It is clear that each equivalence class of $V(X)$ induced by $\sim$ is an independent set. When we assume that $X$ is vertex transitive, we can additionally show that all of the equivalence classes have the same size. Indeed, let $x',x^{\prime\prime}$ be two vertices of $X$ and $X', X^{\prime\prime}$ be their respective equivalence classes induced by $\sim$. Since $X$ is vertex transitive, there exists an automorphism $\varphi$ of $X$ such that $\varphi(x') = x^{\prime\prime}$. However, we also have that $\varphi(X') \subseteq \varphi(X^{\prime\prime})$. Similarly, we have that $\varphi^{-1}(X^{\prime\prime}) \subseteq X'$, so that $\varphi(X') = X^{\prime\prime}$. Hence, all the equivalence classes have the same size, say $\alpha$.

Let $\{X_i\}_{i \in V}$ be the set of equivalence classes induced by $\sim$. Let us define a graph $X'$ with vertex set $V$ as follows: $i,j$ are adjacent in $V$ if there are vertices $x_i \in X_i$ and $x_j \in X_j$ such that $x_ix_j \in E(X)$. Then, $X'$ is well-defined and $X = X'[\overline{K_{\alpha}}]$. We can also show that $X'$ is vertex transitive. This is formalized in the following result:

\begin{lem}[{\cite[Lemma 5.1.1]{Dobson_Malnic_Marusic_2022}}]
	\label{vert-tran-twin}
	Let $X$ be a vertex transitive graph. Let $\{X_i\}_{i=1}^k$ be the equivalence classes of $V(X)$ induced by $\sim$. Then, all of the equivalence classes have the same size, say $\alpha$, and there exists a vertex transitive graph $X'$ such that $X = X'[\overline{K_{\alpha}}]$.
\end{lem}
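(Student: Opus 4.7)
The plan is to verify the structural lemma by checking three things in sequence: that the twin relation is preserved by $\Aut(X)$, that the equivalence classes all have the same size (using vertex transitivity), and that the quotient graph $X'$ makes $X$ into a lexicographic product and inherits vertex transitivity. Nothing delicate is happening here; the work is essentially bookkeeping.

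First, I would establish that automorphisms permute the twin classes. If $x \sim x'$, meaning $N_X(x) = N_X(x')$, then for any $\varphi \in \Aut(X)$ we have $N_X(\varphi(x)) = \varphi(N_X(x)) = \varphi(N_X(x')) = N_X(\varphi(x'))$, so $\varphi(x) \sim \varphi(x')$. Hence $\varphi$ maps each class $X_i$ to a set of pairwise twins, which must lie in a single class $X_{i'}$; applying $\varphi^{-1}$ gives the reverse inclusion, so $\varphi(X_i) = X_{i'}$. Combined with vertex transitivity, for any two classes $X_i, X_j$ we pick $x_i \in X_i$ and $x_j \in X_j$ and an automorphism $\varphi$ with $\varphi(x_i) = x_j$; then $\varphi(X_i) = X_j$, and in particular $\card{X_i} = \card{X_j} = \alpha$.

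Next I would define $X'$ as the graph with vertex set $\{X_1, \ldots, X_k\}$ where $X_i X_j \in E(X')$ iff there exist $x_i \in X_i, x_j \in X_j$ with $x_i x_j \in E(X)$. To see this is well-defined (i.e.\ the existential is equivalent to the universal), note that if $x_i x_j \in E(X)$ and $x_i' \in X_i$, then $x_i' \sim x_i$ gives $x_j \in N_X(x_i) = N_X(x_i')$, so $x_i' x_j \in E(X)$; similarly we may replace $x_j$ by any $x_j' \in X_j$. Since twin classes are independent sets, $X'$ has no self-loops. The identification $V(X) \cong V(X') \times [\alpha]$ via any chosen bijections $X_i \to [\alpha]$ then gives $X = X'[\overline{K_\alpha}]$: within a class there are no edges, and between distinct classes either every possible edge is present or none is, exactly matching the edge set of the lexicographic product $X'[\overline{K_\alpha}]$.

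Finally, for vertex transitivity of $X'$, each $\varphi \in \Aut(X)$ induces a map $\varphi' : V(X') \to V(X')$ by $\varphi'(X_i) \coloneqq \varphi(X_i)$, which is well-defined and bijective by the first step. It preserves edges and non-edges of $X'$ because $\varphi$ does so in $X$ and the edge relation of $X'$ is read off from any representatives. Given two vertices $X_i, X_j$ of $X'$, the automorphism produced above with $\varphi(X_i) = X_j$ induces $\varphi'(X_i) = X_j$, so $X'$ is vertex transitive. There is no real obstacle; the only point where one has to be mildly careful is the well-definedness of $X'$, which is immediate from the twin condition.
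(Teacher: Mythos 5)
Your proof is correct and follows essentially the same route as the paper, which establishes this lemma (citing Dobson--Malni\v{c}--Maru\v{s}i\v{c}) via the discussion immediately preceding its statement: twin classes are independent sets permuted by automorphisms, vertex transitivity forces them to have equal size $\alpha$, and the quotient graph $X'$ on the classes is well-defined, vertex transitive, and satisfies $X = X'[\overline{K_\alpha}]$. Your write-up just fills in the same bookkeeping (well-definedness of the edge relation and the induced action on $X'$) in more detail than the paper bothers to.
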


An exact analogue of the previous lemma also holds true for quantum vertex transitive graphs. First, we shall need an intermediate result that we prove.

\begin{prop}
	\label{prop:weis-lem-twin}
	Let $X$ be a graph, and $x,x', y, y' \in V(X)$ be four vertices such that $x,x'$ are twins and $y,y'$ are not twins. Then, the first iteration of the Weisfeiler--Leman algorithm distinguishes $(x,x')$ from $(y,y')$. Similarly, if $x,x'$ are twins in $\overline{X}$ and $y,y'$ are not twins in $\overline{X}$, then the first iteration of the Weisfeiler--Leman algorithm distinguishes $(x,x')$ from $(y,y')$.
\end{prop}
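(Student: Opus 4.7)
The plan is to show that the stable colouring already diverges after just one refinement step, using the count $\Delta_{12}(\cdot,\cdot)$ of ``edge then non-edge'' length-two walks. The key observation is that if $x,x'$ are twins (hence $xx' \notin E(X)$ by the paper's convention), then every neighbour of $x$ is also a neighbour of $x'$, so $\Delta_{12}(x,x') = 0$, whereas for a pair of non-adjacent non-twins $(y,y')$ one can exhibit a witness $z \in N_X(y) \setminus N_X(y')$ that contributes to $\Delta_{12}(y,y')$.

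More concretely, I would split on the initial colour of $(y,y')$. If $yy' \in E(X)$, then $c_0(y,y') = 1$ while $c_0(x,x') = 2$ (since twins forces $xx' \notin E(X)$), so the pairs are already distinguished by $c_0$ and hence by $c_1$. Otherwise $yy' \notin E(X)$, and both pairs share initial colour $2$; here I compute
\[ \Delta_{12}(x,x') = \bigl|\{z : xz \in E(X),\ zx' \notin E(X),\ z \neq x'\}\bigr| = \bigl|N_X(x) \setminus (N_X(x') \cup \{x'\})\bigr|. \]
Because $N_X(x) = N_X(x')$ and $x' \notin N_X(x)$ (twins are non-adjacent), this set is empty, so $\Delta_{12}(x,x') = 0$. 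For $(y,y')$, non-twinness gives some $z \in N_X(y) \mathbin{\triangle} N_X(y')$; by symmetry assume $z \in N_X(y) \setminus N_X(y')$. Since $yy' \notin E(X)$, we have $z \neq y'$ (otherwise $yy' = yz \in E(X)$), so $c_0(y,z) = 1$ and $c_0(z,y') = 2$, giving $\Delta_{12}(y,y') \geq 1$. Hence $c_1(x,x') \neq c_1(y,y')$.

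For the complementary statement, I would simply run the identical argument with the roles of $E(X)$ and $E(\overline{X})$ swapped, i.e.\ reading $\Delta_{21}(\cdot,\cdot)$ in place of $\Delta_{12}(\cdot,\cdot)$ and invoking $N_{\overline{X}}(x) = N_{\overline{X}}(x')$ together with a witness in $N_{\overline{X}}(y) \setminus N_{\overline{X}}(y')$; alternatively, one observes that the Weisfeiler--Leman colouring of $X$ and of $\overline{X}$ induce the same partition of $V(X)^2$ (the initial colours $1$ and $2$ are merely interchanged), and applies the first half to $\overline{X}$.

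I do not expect a substantial obstacle: the only subtlety is the bookkeeping that $z \neq x'$ in the definition of $\Delta_{12}$, which is automatically satisfied by the non-adjacency of the twins $x,x'$, and the corresponding fact $z \neq y'$ in the non-twin case, which is forced by $yy' \notin E(X)$. Beyond this, the argument is a direct comparison of two explicit counts after one iteration.
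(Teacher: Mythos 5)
Your proposal is correct and follows essentially the same route as the paper's proof: both compare, after one refinement, the count of vertices $z$ adjacent to the first vertex of the pair and non-adjacent to the second, which vanishes for a twin pair but not for a non-twin pair (and likewise in the complement). Your write-up is in fact somewhat more careful than the paper's, since you explicitly handle the case where the initial colours $c_0(x,x')$ and $c_0(y,y')$ already differ and check that the witness $z$ is distinct from $y'$.
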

\begin{proof}
	Since $y,y'$ are not twins, without loss of generality, we may assume that there is a vertex $z \in X$ such that $yz \in E(X)$ and $y'z \notin E(X)$. However, no such vertex exists for $x,x'$. Hence, the first iteration of the Weisfeiler--Leman algorithm distinguishes $(x,x')$ from $(y,y')$. The other case can be established similarly.
\end{proof}

\begin{lem}
	\label{q-vert-tran-twin}
	Let $X$ be a quantum vertex transitive graph. Let $\{X_i\}_{i=1}^k$ be the equivalence classes of $V(X)$ induced by $\sim$. Then, all of the equivalence classes have the same size, say $\alpha$, and there exists a quantum vertex transitive graph $X'$ with no twins such that $X = X'[\overline{K_{\alpha}}]$.
\end{lem}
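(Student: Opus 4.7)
The plan is to first show that the equivalence classes $X_i$ all have the same size $\alpha$, then build a quotient graph $X'$ on vertex set $\{X_1,\dots,X_k\}$, and finally verify that $X'$ is twin-free and quantum vertex transitive. For the equal-sizes claim, I combine \autoref{prop:weis-lem-twin} with \autoref{lem:weis-lem-orbital}. By quantum vertex transitivity, $u_{xx'}\ne 0$ for all $x,x'\in V(X)$; since $u_{xx'}$ is a projection, $u_{xx'}^2 = u_{xx'} \ne 0$, so the pairs $(x,x)$ and $(x',x')$ always lie in the same orbital of $\Qut(X)$. Hence \autoref{lem:weis-lem-orbital} forces $\overline{c}(x,x) = \overline{c}(x',x')$ for all $x,x'\in V(X)$, where $\overline{c}$ is the stable colouring of $X$. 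The stability of $\overline{c}$ implies that for every colour $c$, the count $\lvert\{z\in V(X) : \overline{c}(x,z)=c\}\rvert$ depends only on $\overline{c}(x,x)$; summing this over the colours marking non-diagonal twin pairs (a well-defined subset of colours by \autoref{prop:weis-lem-twin}) yields $\lvert[x]\rvert = \lvert[x']\rvert$ for all $x,x'\in V(X)$.

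Next I define $X'$ on vertex set $\{X_1,\dots,X_k\}$ with $X_aX_b\in E(X')$ iff some (equivalently, every) $i\in X_a, j\in X_b$ satisfies $ij\in E(X)$; the equivalence holds because twins share neighbourhoods. The identity $X = X'[\overline{K_\alpha}]$ is immediate from the definition of the lexicographic product, since each $X_a$ is an independent set of size $\alpha$ in $X$ and inter-class adjacency is decided by $E(X')$. If two distinct vertices $X_a, X_b$ of $X'$ were twins, then any $i\in X_a, j\in X_b$ would have identical $X$-neighbourhoods, making them twins in $X$ and forcing $X_a = X_b$; hence $X'$ has no twins.

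The main work is showing that $X'$ is quantum vertex transitive. I will construct inside $C(\Qut(X))$ a magic unitary $[w_{ab}]_{a,b\in[k]}$ commuting with $A_{X'}$ whose entries are all nonzero; the universal property of $\Qut(X')$ then produces a $*$-homomorphism $C(\Qut(X')) \to C(\Qut(X))$ sending each fundamental-representation generator to the corresponding nonzero $w_{ab}$, which forces no generator of $\Qut(X')$ to vanish. I set $w_{ab} := \sum_{j\in X_b} u_{ij}$ for any $i\in X_a$. The main obstacle is the well-definedness of this expression: for $i,i'\in X_a$, writing $w_i := \sum_{j\in X_b} u_{ij}$, the orbital relation (stating that $u_{ij}u_{i'k}=0$ whenever $i\sim i'$ but $j\not\sim k$, which follows from \autoref{prop:weis-lem-twin} and \autoref{lem:weis-lem-orbital}) lets me expand
\[ w_i = w_i\cdot 1 = w_i\sum_{k\in V(X)} u_{i'k} = \sum_{j\in X_b}\sum_{k\in X_b} u_{ij}u_{i'k} = w_iw_{i'}, \]
and symmetrically $w_{i'}=w_{i'}w_i$. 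Taking adjoints of the first identity (using self-adjointness of $w_i, w_{i'}$) gives $w_i = w_{i'}w_i$, which combined with the second yields $w_i = w_{i'}$. A parallel calculation shows $w_{ab}=\sum_{i\in X_a} u_{ij}$ for any $j\in X_b$, from which the magic unitary axioms follow easily. The commutation $A_{X'}w = wA_{X'}$ reduces, after one unfolding using the two descriptions of $w_{ab}$, to $A_X u = uA_X$, and each $w_{ab}$ is a sum of nonzero pairwise orthogonal projections (by QVT), hence nonzero.
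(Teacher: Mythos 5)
Your proof is correct, but it takes a genuinely different route from the paper's at the two non-trivial steps. For the equal-size claim, the paper builds the auxiliary ``twin graph'' $X''$ on $V(X)$ whose edges are the non-diagonal twin pairs, uses \autoref{prop:weis-lem-twin} and \autoref{lem:weis-lem-orbital} to show that the fundamental representation of $\Qut(X)$ satisfies the defining relations of $\Qut(X'')$, deduces that $X''$ is quantum vertex transitive and hence regular, and reads off $\card{X_i} = \card{X_j}$ from the degrees; you instead extract the same conclusion directly from the stable Weisfeiler--Leman colouring, using stability to see that the number of twins of $x$ is determined by $\overline{c}(x,x)$, which is constant across $V(X)$ by quantum vertex transitivity and \autoref{lem:weis-lem-orbital}. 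Both arguments rest on the same orthogonality input from \autoref{prop:weis-lem-twin}. For the quantum vertex transitivity of $X'$, the paper invokes \autoref{main-result} to obtain $\Qut(X) = \mathbb{S}_\alpha^+ \wr_* \Qut(X')$ and reads off $v_{pq} \neq 0$ from the product form $w_{(p,i)(q,j)} = v_{pq}u_{ij}$ of the free wreath product's fundamental representation; you instead construct the quotient magic unitary $[w_{ab}]_{a,b}$ by block sums inside $C(\Qut(X))$ (in the spirit of \autoref{lem:disj-uni}) and apply the universal property of $C(\Qut(X'))$. Your well-definedness computation ($w_i = w_i w_{i'}$ and $w_{i'} = w_{i'}w_i$, then taking adjoints) is valid, as are the verification of the magic unitary axioms, the reduction of $A_{X'}w = wA_{X'}$ to $A_X u = uA_X$, and the non-vanishing of the block sums. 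What your approach buys is independence from the main theorem and from the free wreath product machinery, making the lemma self-contained; what the paper's approach buys is brevity, since \autoref{main-result} is already available at that point in the text.
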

\begin{proof}
	It follows from \autoref{prop:weis-lem-twin} that the set $\bigcup_{i=1}^k X_i \times X_i$ is a union of quantum orbitals. Let us construct a graph $X^{\prime\prime}$ with vertex set $V(X)$, and edge set $E(X^{\prime\prime}) = \{(x,x'): x \neq x', \text{\ and }x,x' \text{\ are twins}\}$. Let $[w_{x_ix_j}]_{x_i, x_j \in V(X)}$ denote the fundamental representation of $\Qut(X)$. Then, we have that $w_{x_ix_j}w_{x'_ix_{j'}} = 0$, if $x_i, x_i' \in X_i$ and $x_j \in V(X_j), x_{j'} \in V(X_{j'})$, where $j \neq j'$. Hence, $[w_{x_ix_j}]_{x_i, x_j \in V(X)} A_{X^{\prime\prime}}$ satisfies \eqref{eq:qaut_def} for $X^{\prime\prime}$.
	
	Let $[v'_{x_ix_j}]_{x_i, x_j \in V(X)}$ be the fundamental representation of $\Qut(X^{\prime\prime})$. Then, since $w_{x_ix_j} \neq 0$ for all $x_i, x_j \in V(X)$, we also have that $v'_{x_ix_j} \neq 0$, so that $X^{\prime\prime}$ is quantum vertex transitive. Since $X^{\prime\prime}$ is quantum vertex transitive, it is regular. Note that the degree of each vertex $x_i \in X_i$ in $X^{\prime\prime}$ is $\card{X_i} - 1$. Hence, for all $i,j \in [k]$, we have $\card{X_i} = \card{X_j}$. Let us set the cardinality of each of these equivalence classes to be $\alpha$. By proceeding in a similar manner as the discussion preceding \autoref{vert-tran-twin}, we may construct a graph $X'$ such that $X = X'[\overline{K_{\alpha}}]$. It is also easy to see that $X'$ has no twins.
	
	Now, we shall show that $X'$ is quantum vertex transitive. Let $[u_{ij}]_{i,j \in [\alpha]}$ denote the fundamental representation of $\mathbb{S}_{\alpha}^+$, and $[v_{pq}]_{p,q \in [k]}$ be the fundamental representation of $\Qut(X')$. Then, by \autoref{main-result}, we have that $\Qut(X) = \mathbb{S}_{\alpha}^+ \wr_* \Qut(X')$. We now identify the vertex set of $X$ by $[k] \times [\alpha]$. Let $p,q \in [k]$ be arbitrary. Since $X$ is quantum vertex transitive, note that for every $i,j \in [\alpha]$, we have that $w_{(p,i)(q,j)} = v_{pq} u_{ij} \neq 0$, so that $v_{pq} \neq 0$. Since $p,q$ were arbitrary, we see that $X'$ is quantum vertex transitive.
\end{proof}

Using \autoref{vert-tran-disj} and \autoref{vert-tran-twin}, we can give a complete characterization of the automorphism group of a lexicographic product of two vertex transitive graphs. We shall state this theorem without proof, and then prove an analogue of it for the quantum automorphism group of a lexicographic product of two vertex transitive graphs.

\begin{thm}[{\cite[Theorem 5.1.8]{Dobson_Malnic_Marusic_2022}}]
	\label{thm:aut-comp-vert-tran}
	Let $X$ and $Y$ be vertex transitive graphs. If $\Aut(X[Y]) \neq \Aut(Y) \wr \Aut(X)$, then there exist positive integers $\alpha,\beta >1$ and vertex transitive graphs $X', Y'$ such that $X = X'[\overline{K_{\alpha}}]$ and $Y = \overline{K_{\beta}}[Y']$, or $X = X'[K_{\alpha}]$ and $Y = K_{\beta}[Y']$. Moreover, if $\alpha,\beta$ are chosen to be maximal, then $\Aut(X[Y]) = (\Aut(Y') \wr S_{\alpha\beta})\wr \Aut(X')$.
\end{thm}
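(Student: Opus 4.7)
The plan is to combine Sabidussi's theorem (\autoref{Sabidussi}) with the structural decomposition lemmas \autoref{vert-tran-disj} and \autoref{vert-tran-twin}, and then collapse the resulting nested lexicographic products using associativity before applying Sabidussi a second time. By \autoref{Sabidussi}, the failure of $\Aut(X[Y]) = \Aut(Y)\wr\Aut(X)$ means that at least one of the two Sabidussi conditions fails: either (a) $Y$ is disconnected and $X$ has twins, or (b) $\overline{Y}$ is disconnected and $\overline{X}$ has twins.

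In case (a), I would apply \autoref{vert-tran-disj} to the disconnected vertex transitive graph $Y$ to obtain $Y = \overline{K_\beta}[Y']$ with $Y'$ connected vertex transitive and $\beta > 1$, and apply \autoref{vert-tran-twin} to $X$ to obtain $X = X'[\overline{K_\alpha}]$ with $X'$ vertex transitive and $\alpha > 1$; choosing $\alpha$ maximal (i.e.\ as the common size of a twin-equivalence class of $X$) ensures that $X'$ has no twins, since any twin pair in $X'$ would amalgamate into a larger twin class in $X$. Case (b) is reduced to case (a) by passing to complements: applying \autoref{vert-tran-disj} and \autoref{vert-tran-twin} to $\overline{Y}$ and $\overline{X}$ gives $\overline{Y} = \overline{K_\beta}[Y'']$ and $\overline{X} = X''[\overline{K_\alpha}]$, and the identity $\overline{A[B]} = \overline{A}[\overline{B}]$ from the preliminaries yields $Y = K_\beta[\overline{Y''}]$ and $X = \overline{X''}[K_\alpha]$; setting $X' := \overline{X''}$ and $Y' := \overline{Y''}$ produces the second form promised by the theorem.

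For the ``moreover'' statement (focusing on case (a); case (b) is analogous after passing to complements), associativity of the lexicographic product together with the trivial identity $\overline{K_\alpha}[\overline{K_\beta}] = \overline{K_{\alpha\beta}}$ gives
\[ X[Y] \;=\; X'[\overline{K_\alpha}]\bigl[\overline{K_\beta}[Y']\bigr] \;=\; X'\bigl[\overline{K_{\alpha\beta}}[Y']\bigr]. \]
Sabidussi's theorem applies to the inner product $\overline{K_{\alpha\beta}}[Y']$: its first condition holds because $Y'$ is connected, and its second holds because $K_{\alpha\beta}$ has no twins (adjacent vertices are never twins, by the paper's definition). This gives $\Aut(\overline{K_{\alpha\beta}}[Y']) = \Aut(Y')\wr S_{\alpha\beta}$. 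Applying Sabidussi a second time to the outer product, the first condition requires $X'$ to be twin-free (guaranteed by maximality of $\alpha$), and the second requires the complement $\overline{\overline{K_{\alpha\beta}}[Y']} = K_{\alpha\beta}[\overline{Y'}]$ to be connected, which holds since $K_{\alpha\beta}$ is connected. Combining the two applications yields $\Aut(X[Y]) = (\Aut(Y')\wr S_{\alpha\beta})\wr \Aut(X')$, as claimed.

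The main obstacle is the careful bookkeeping around maximality: one has to verify that taking $\alpha$ and $\beta$ maximal forces $X'$ (respectively $\overline{X'}$, in case (b)) to be twin-free and $Y'$ (respectively $\overline{Y'}$) to be connected, since these are precisely the hypotheses required for both nested applications of Sabidussi. A minor secondary point is that one should convince oneself that $\Aut(X[Y]) = \Aut(\overline{X[Y]}) = \Aut(\overline{X}[\overline{Y}])$ so that case (b) truly reduces to case (a). Once these points are settled, the remainder of the argument is just associativity of the lex product together with the identities $\overline{A[B]} = \overline{A}[\overline{B}]$ and $\overline{K_\alpha}[\overline{K_\beta}] = \overline{K_{\alpha\beta}}$.
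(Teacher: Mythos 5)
Your argument is correct. The paper itself states this theorem without proof (citing Dobson--Malni\v{c}--Maru\v{s}i\v{c}), but your proof is exactly the classical shadow of the paper's own proof of the quantum analogue, \autoref{thm:qut-comp-vert-tran}: the same case split via Sabidussi's conditions, the same use of \autoref{vert-tran-disj} and \autoref{vert-tran-twin}, the same collapse $X'[\overline{K_\alpha}][\overline{K_\beta}[Y']] = X'[\overline{K_{\alpha\beta}}[Y']]$, and the same two nested applications of \autoref{Sabidussi} in place of \autoref{main-result}, with case (b) handled by complementation.
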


Similarly, using \autoref{vert-tran-disj} and \autoref{vert-tran-twin}, we obtain the following direct analogue of \autoref{thm:aut-comp-vert-tran} for the quantum automorphism groups of vertex transitive graphs:

\begin{thm}
	\label{thm:qut-comp-vert-tran}
	Let $X$ and $Y$ be vertex transitive graphs. If $\Qut(X[Y]) \neq \Qut(Y) \wr_* \Qut(X)$, then there exist positive integers $\alpha,\beta >1$ and vertex transitive graphs $X', Y'$ such that $X = X'[\overline{K_{\alpha}}]$ and $Y = \overline{K_{\beta}}[Y']$, or $X = X'[K_{\alpha}]$ and $Y = K_{\beta}[Y']$. Moreover, if $\alpha,\beta$ are chosen to be maximal, then $\Qut(X[Y]) = (\Qut(Y') \wr_* \mathbb{S}^+_{\alpha\beta})\wr_* \Qut(X')$.
\end{thm}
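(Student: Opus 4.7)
The approach is to combine the quantum Sabidussi theorem (\autoref{main-result}) with the structural lemmas \autoref{vert-tran-disj} and \autoref{vert-tran-twin} to obtain the claimed decompositions, and then to reapply \autoref{main-result} together with the disjoint-union formula from \autoref{sec:disjoint} to compute the quantum automorphism group explicitly.

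First, by \autoref{main-result}, the failure $\Qut(X[Y]) \neq \Qut(Y) \wr_* \Qut(X)$ forces either (a) $Y$ disconnected with $X$ having twins, or (b) $\overline{Y}$ disconnected with $\overline{X}$ having twins. In case (a), \autoref{vert-tran-disj} produces a decomposition $Y = \overline{K_\beta}[Y']$ with $Y'$ connected vertex transitive and $\beta > 1$, while \autoref{vert-tran-twin} produces $X = X'[\overline{K_\alpha}]$ with $X'$ vertex transitive and twin-free and $\alpha > 1$. Case (b) reduces to case (a) by applying the same lemmas to $\overline{X}$ and $\overline{Y}$ and using that the lexicographic product commutes with complementation; this yields $X = X'[K_\alpha]$ and $Y = K_\beta[Y']$ with $\overline{X'}$ twin-free and $\overline{Y'}$ connected. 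The parameters $\alpha$ and $\beta$ are intrinsic to $X,Y$ (the common size of a twin-class of $X$ or $\overline{X}$, and the number of connected components of $Y$ or $\overline{Y}$), hence automatically maximal.

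For the group-theoretic computation in case (a), I would use associativity of the lexicographic product together with the easy identity $\overline{K_\alpha}[\overline{K_\beta}] = \overline{K_{\alpha\beta}}$ to rewrite $X[Y] = X'\bigl[\overline{K_{\alpha\beta}}[Y']\bigr]$. Although the inner factor is disconnected, $X'$ has no twins, and the complement $K_{\alpha\beta}[\overline{Y'}]$ of the inner factor is connected (since $\alpha\beta \geq 2$), so \autoref{main-result} applies and yields $\Qut(X[Y]) = \Qut\bigl(\overline{K_{\alpha\beta}}[Y']\bigr) \wr_* \Qut(X')$. Since $\overline{K_{\alpha\beta}}[Y']$ is the disjoint union of $\alpha\beta$ copies of the connected graph $Y'$, the disjoint-union formula recalled at the start of \autoref{sec:disjoint} gives $\Qut\bigl(\overline{K_{\alpha\beta}}[Y']\bigr) = \Qut(Y') \wr_* \mathbb{S}^+_{\alpha\beta}$, and combining these produces the desired formula. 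In case (b), the symmetric argument uses $K_\alpha[K_\beta] = K_{\alpha\beta}$ together with $\Qut(G) = \Qut(\overline{G})$ to reduce $\Qut(K_{\alpha\beta}[Y']) = \Qut\bigl(\overline{K_{\alpha\beta}}[\overline{Y'}]\bigr)$ to the same disjoint-union formula, now applied to copies of the connected graph $\overline{Y'}$, producing the same final expression.

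The main obstacle is checking the Sabidussi conditions for the reassembled decompositions $X'\bigl[\overline{K_{\alpha\beta}}[Y']\bigr]$ and $X'\bigl[K_{\alpha\beta}[Y']\bigr]$; this reduces to verifying the relevant connectivity statements and exploiting the twin-freeness of $X'$ (respectively $\overline{X'}$) supplied by the structural lemmas. Everything else is formal manipulation via associativity of $[-]$ and the complementation identity $\overline{X[Y]} = \overline{X}[\overline{Y}]$.
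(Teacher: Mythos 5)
Your proposal is correct and follows essentially the same route as the paper: reduce via \autoref{main-result} to the two cases, decompose $X$ and $Y$ using \autoref{vert-tran-twin} and \autoref{vert-tran-disj}, reassociate to $X'\bigl[\overline{K_{\alpha\beta}}[Y']\bigr]$, and apply \autoref{main-result} again after checking twin-freeness of $X'$ and connectedness of the complement of the inner factor. The only (immaterial) divergence is that you compute $\Qut\bigl(\overline{K_{\alpha\beta}}[Y']\bigr)$ via the disjoint-union formula from \autoref{sec:disjoint}, whereas the paper applies \autoref{main-result} once more with $\Qut(\overline{K_{\alpha\beta}}) = \mathbb{S}_{\alpha\beta}^+$; both yield $\Qut(Y') \wr_* \mathbb{S}^+_{\alpha\beta}$.
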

\begin{proof}
	It follows from \autoref{main-result} that if $\Qut(X[Y]) \neq \Qut(Y) \wr_* \Qut(X)$, then one of the following holds true:
	\begin{enumerate}
		\item $Y$ is disconnected and $X$ has twins,
		\item or $\overline{Y}$ is disconnected and $\overline{X}$ has twins.
	\end{enumerate}
	
	Since at least one of $Y, \ \overline{Y}$ is connected, exactly one of the above is true. Let us assume that the former is true. Since $Y$ is vertex transitive and disjoint, it follows from \autoref{vert-tran-disj} that we may write $Y$ as $\overline{K_{\beta}}[Y']$, where $Y'$ is a connected, vertex transitive graph such that each connected component of $Y$ is isomorphic to $Y'$ and $\beta>1$. Similarly, since $X$ is vertex transitive and has twins, we may write $X$ as $X'[\overline{K_{\alpha}}]$, where $X'$ is a vertex transitive graph and $\alpha>1$. Moreover, if we choose $\alpha$ to be maximal, then $X'$ has no twins.
	
	Hence, $X[Y] = X'[\overline{K_{\alpha}}][\overline{K_{\beta}}[Y']] = X'[\overline{K_{\alpha\beta}}[Y']]$.
	Note that $\overline{K_{\alpha\beta}}[Y']$ is disconnected, so $\overline{\overline{K_{\alpha\beta}}[Y']}$ is connected, and $X'$ has no twins. Therefore, by \autoref{main-result}, $\Qut(X'[\overline{K_{\alpha\beta}}[Y']]) = \Qut(\overline{K_{\alpha\beta}}[Y'])\wr_* \Qut(X')$. Similarly, since $Y'$ is connected, we may also show that $\Qut(\overline{K_{\alpha\beta}}[Y']) = \Qut(Y') \wr_* \Qut(\overline{K_{\alpha\beta}}) = \Qut(Y') \wr_* \mathbb{S}_{\alpha\beta}^+$. Hence, $\Qut(X[Y]) = (\Qut(Y') \wr_* \mathbb{S}^+_{\alpha\beta})\wr_* \Qut(X')$ as required.
	
	Now, let us assume that $\overline{Y}$ is disconnected and $\overline{X}$ has twins. Then, $\Qut(X[Y]) = \Qut(\overline{X[Y]}) = \Qut(\overline{X}[\overline{Y}])$, and $\overline{Y}$ and $\overline{X}$ are vertex transitive graphs. Hence, using the same arguments as in the former case, we can show that there exist positive integers $\alpha,\beta >1$ and vertex transitive graphs $\overline{X'}, \overline{Y'}$ such that $\overline{X} = \overline{X'}[\overline{K_{\alpha}}]$ and $\overline{Y} = \overline{K_{\beta}}[\overline{Y'}]$. Hence, $X = X'[K_{\alpha}]$ and $Y = K_{\beta}[Y']$.
	Hence, $\Qut(X[Y]) = \Qut(\overline{X}[\overline{Y}]) = (\Qut(\overline{Y'}) \wr_* \mathbb{S}^+_{\alpha\beta})\wr_* \Qut(\overline{X'}) = (\Qut(Y') \wr_* \mathbb{S}^+_{\alpha\beta})\wr_* \Qut(X')$.
\end{proof}

We end this section by giving an analogue of \autoref{thm:qut-comp-vert-tran} for quantum vertex transitive graphs. This can also be thought of as a quantum analogue of \autoref{thm:aut-comp-vert-tran}.

\begin{thm}
	\label{thm:qut-comp-qvert-tran}
	Let $X, Y$ be quantum vertex transitive graphs such that $\Qut(X[Y]) \neq \Qut(Y) \wr_* \Qut(X)$.
	Without loss of generality, assume that $\overline{Y}$ is connected. Then, there exist
	\begin{enumerate}
		\item connected, pairwise quantum isomorphic, quantum vertex transitive graphs $\{Y_i\}_{i \in [\beta]}$, for some $\beta > 1$, such that $Y = \bigcup_{i=1}^\beta Y_i$, and
		\item a quantum vertex transitive graph $X'$ with no twins and $\alpha>1$, such that $X = X'[\overline{K_\alpha}]$,
	\end{enumerate}
	and $\Qut(X[Y]) = \Qut(\bigsqcup_{i=1}^{\alpha} \overline{K_{\beta}}[Y_i]) \wr_* \Qut[X']$.
\end{thm}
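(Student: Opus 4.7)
The plan is to reduce the theorem to one more application of \autoref{main-result}, after peeling off the twin structure of $X$ and the disconnectedness of $Y$ using the two preparatory lemmas. First I would combine the hypothesis that $\overline{Y}$ is connected with the assumption $\Qut(X[Y]) \neq \Qut(Y)\wr_*\Qut(X)$: because condition (ii) of \autoref{main-result} is vacuously satisfied, the failure of the wreath-product equality must come from condition (i), i.e.{} $Y$ is disconnected \emph{and} $X$ has twins. This isolates the single branch of the dichotomy I need to work in.

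Next I would extract the two structural decompositions from the preparatory lemmas. Applying \autoref{q-vert-tran-disj} to the disconnected, quantum vertex transitive graph $Y$ produces $Y=\bigsqcup_{i=1}^{\beta}Y_i$ with $\beta\geq 2$ and each $Y_i$ connected, quantum vertex transitive, and pairwise quantum isomorphic to the others. Applying \autoref{q-vert-tran-twin} to $X$ produces a twin-free, quantum vertex transitive graph $X'$ together with $\alpha\geq 2$ (necessarily at least $2$, since $X$ has twins) such that $X=X'[\overline{K_\alpha}]$.

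The core computation then uses associativity of the lexicographic product:
\[
	X[Y] \;=\; X'[\overline{K_\alpha}][Y] \;=\; X'\bigl[\,\overline{K_\alpha}[Y]\,\bigr].
\]
Setting $Z \coloneqq \overline{K_\alpha}[Y]$ and using that $\overline{K_\alpha}$ is edgeless, one has $Z = \bigsqcup_{j=1}^{\beta}\overline{K_\alpha}[Y_j]$, a disjoint union of $\alpha\beta\geq 2$ pairwise quantum-isomorphic connected graphs. I would then apply \autoref{main-result} to the pair $(X',Z)$: condition (i) holds because $X'$ has no twins, and condition (ii) holds because $Z$ is disconnected and the complement of a disconnected graph is always connected. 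This yields $\Qut(X[Y]) = \Qut(Z)\wr_*\Qut(X')$.

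To match the form in the statement, I would invoke \autoref{thm:disj-uni}: both $Z$ and the graph $\bigsqcup_{i=1}^{\alpha}\overline{K_\beta}[Y_i]$ consist of $\alpha\beta$ pairwise quantum-isomorphic connected components drawn from the single quantum isomorphism class $\{Y_1,\ldots,Y_\beta\}$, and the presentation of \autoref{thm:disj-uni} shows that the quantum automorphism group of such a disjoint union is determined by the pairwise $\QIso$-algebras amalgamated with $C(\Sbb_{\alpha\beta}^+)$, so regrouping components leaves the quantum group unchanged. I do not anticipate a genuine obstacle here: the proof is essentially an assembly of the two preparatory lemmas, associativity of the lexicographic product, and a second application of \autoref{main-result}; the only mildly delicate point is this final bookkeeping identification between two different disjoint-union presentations of the same quantum group.
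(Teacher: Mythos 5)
Your argument is essentially the paper's proof: isolate the branch where $Y$ is disconnected and $X$ has twins, apply \autoref{q-vert-tran-twin} and \autoref{q-vert-tran-disj}, use associativity to write $X[Y]=X'\bigl[\overline{K_\alpha}[Y]\bigr]$, and apply \autoref{main-result} to the pair $(X',\overline{K_\alpha}[Y])$, whose second factor is disconnected with connected complement. The only place you go beyond the paper is the final bookkeeping step: the paper's proof simply stops at $\Qut(\bigsqcup_{i=1}^{\beta}\overline{K_\alpha}[Y_i])\wr_*\Qut(X')$ (the statement's $\bigsqcup_{i=1}^{\alpha}\overline{K_\beta}[Y_i]$ is evidently an $\alpha\leftrightarrow\beta$ transposition typo), and your proposed reconciliation via \autoref{thm:disj-uni} is the one step I would not accept as written, since when the $Y_i$ are quantum isomorphic but non-isomorphic the two expressions denote genuinely different graphs, and \autoref{thm:disj-uni} does not say that swapping components for quantum isomorphic ones preserves $\Qut$ (the individual $\QIso(X_i,X_j)$-algebras need not be isomorphic under such a swap).
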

\begin{proof}
	Since $\Qut(X[Y]) \neq \Qut(Y) \wr_* \Qut(X)$, and $\overline{Y}$ is connected, we may conclude that $Y$ is disconnected and $X$ has twins. Since $X$ is quantum vertex transitive and has twins, by \autoref{q-vert-tran-twin}, there exists a quantum vertex transitive graph $X'$ with no twins and $\alpha>1$ such that $X = X'[\overline{K_{\alpha}}]$. Similarly, since $Y$ is disconnected and quantum vertex transitive, by \autoref{q-vert-tran-disj}, there exist connected, pairwise quantum isomorphic, quantum vertex transitive graphs $\{Y_i\}_{i=1}^{\beta}$, for some $\beta > 1$, such that $Y = \bigcup_{i=1}^\beta Y_i$.
	
	Hence, $X[Y] = X'[\overline{K_{\alpha}}][\bigcup_{i=1}^\beta Y_i] = X'[\overline{K_{\alpha}}[\bigcup_{i=1}^\beta Y_i]]$. Note that $\overline{K_{\alpha}}[\bigcup_{i=1}^\beta Y_i]$ is disconnected, so its complement is connected. Moreover, $X'$ has no twins, so \autoref{main-result} gives us that $\Qut(X[Y]) = \Qut(\overline{K_{\alpha}}[\bigcup_{i=1}^\beta Y_i]) \wr_* \Qut(X')$. Further simplifying $\overline{K_{\alpha}}[\bigcup_{i=1}^\beta Y_i]$ as $\bigcup_{i=1}^\beta \overline{K_{\alpha}}[Y_i]$, we finally have that $\Qut(X[Y]) = \Qut(\bigsqcup_{i=1}^\beta \overline{K_{\alpha}}[Y_i]) \wr_* \Qut[X']$.
\end{proof}

We can prove an intermediate result of \autoref{thm:qut-comp-vert-tran} and \autoref{thm:qut-comp-qvert-tran}. We shall omit the proof as it follows by combining the proofs of \autoref{thm:qut-comp-vert-tran} and \autoref{thm:qut-comp-qvert-tran}.

\begin{prop}
	Let $X$ be a quantum vertex transitive graph, and $Y$ be a vertex transitive graph. If $\Qut(X[Y]) \neq \Qut(Y) \wr_* \Aut(X)$, then there exist positive integers $\alpha,\beta >1$ and vertex transitive graphs $X', Y'$ such that $X = X'[\overline{K_{\alpha}}]$ and $Y = \overline{K_{\beta}}[Y']$, or $X = X'[K_{\alpha}]$ and $Y = K_{\beta}[Y']$. Moreover, if $\alpha,\beta$ are chosen to be maximal, then $\Qut(X[Y]) = (\Qut(Y') \wr_* \mathbb{S}^+_{\alpha\beta}) \wr_* \Qut(X')$.\qed
\end{prop}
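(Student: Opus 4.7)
The plan is to directly mirror the proofs of \autoref{thm:qut-comp-vert-tran} and \autoref{thm:qut-comp-qvert-tran}, combining the classical decomposition result for $Y$ (\autoref{vert-tran-disj}) with the quantum decomposition result for $X$ (\autoref{q-vert-tran-twin}). Because $\Qut(X[Y]) = \Qut(\overline{X}[\overline{Y}])$ via the identity $\overline{X[Y]} = \overline{X}[\overline{Y}]$, and the complement of a (quantum) vertex transitive graph is again (quantum) vertex transitive, I can split the argument into two symmetric cases according to which of $Y$, $\overline{Y}$ is disconnected; I treat the first case and obtain the second by passing to complements at the outset.

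Under the assumption $Y$ disconnected, the failure of equality combined with \autoref{main-result} forces $X$ to have twins. I then apply \autoref{vert-tran-disj} to the vertex transitive graph $Y$ to write $Y = \overline{K_\beta}[Y']$ with $Y'$ connected vertex transitive and $\beta > 1$, and I apply \autoref{q-vert-tran-twin} to the quantum vertex transitive graph $X$ to write $X = X'[\overline{K_\alpha}]$ with $X'$ quantum vertex transitive having no twins and $\alpha > 1$. Connectedness of $Y'$ and the absence of twins in $X'$ guarantee that $\beta$ and $\alpha$ are maximal.

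Using associativity of the lexicographic product and $\overline{K_\alpha}[\overline{K_\beta}] = \overline{K_{\alpha\beta}}$, I rewrite
\[ X[Y] = X'\bigl[\overline{K_{\alpha\beta}}[Y']\bigr]. \]
Since $\overline{K_{\alpha\beta}}[Y']$ is disconnected (its complement is therefore connected) and $X'$ has no twins, \autoref{main-result} applies to yield $\Qut(X[Y]) = \Qut(\overline{K_{\alpha\beta}}[Y']) \wr_* \Qut(X')$. Applying \autoref{main-result} once more to the inner factor, using that $Y'$ is connected and that $K_{\alpha\beta}$ has no twins (adjacent vertices are never twins by the paper's convention), produces $\Qut(\overline{K_{\alpha\beta}}[Y']) = \Qut(Y') \wr_* \Qut(\overline{K_{\alpha\beta}}) = \Qut(Y') \wr_* \mathbb{S}^+_{\alpha\beta}$. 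Combining these two identities gives the claimed formula $\Qut(X[Y]) = (\Qut(Y') \wr_* \mathbb{S}^+_{\alpha\beta}) \wr_* \Qut(X')$.

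I do not anticipate a serious obstacle: the argument is a direct fusion of the previously proved decomposition lemmas followed by two applications of the quantum Sabidussi theorem. The only point requiring care is the bookkeeping to verify the Sabidussi-type hypotheses at each invocation of \autoref{main-result}, which is handled uniformly by the fact that at each step one of the outer graphs has no twins and the appropriate complement is connected.
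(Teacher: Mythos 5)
Your proof is correct and is exactly the combination of the proofs of \autoref{thm:qut-comp-vert-tran} and \autoref{thm:qut-comp-qvert-tran} that the paper itself indicates (the paper omits the proof for precisely this reason): the same case split via complementation, the same decompositions of $Y$ via \autoref{vert-tran-disj} and of $X$ via \autoref{q-vert-tran-twin}, and the same two applications of \autoref{main-result} to $X'\bigl[\overline{K_{\alpha\beta}}[Y']\bigr]$. The only caveat is that your argument (necessarily) produces $X'$ quantum vertex transitive rather than vertex transitive, since $X$ is only assumed quantum vertex transitive --- this is an imprecision in the proposition's statement, not a gap in your proof.
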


As an application of \autoref{thm:qut-comp-vert-tran} we can now compute the quantum automorphism groups of several new graphs. As an example, let $C_4$ denote the cycle graph on four vertices. Then, by \autoref{thm:qut-comp-vert-tran}, we have that $\Qut(C_4(\overline{C_4})) = (\mathbb{S}_2^+ \wr_* \mathbb{S}_4^+)\wr_* \mathbb{S}_2^+$.

\section{Conclusion}
In 1961, Sabidussi provided necessary and sufficient conditions for the automorphism group of a lexicographic product of two graphs to be equal to the wreath product of the two automorphism groups \cite{sabidussi1961}. In 2016, Chassaniol proved a quantum version of Sabidussi's theorem for finite, regular graphs \cite{chassaniol2016}. In this paper, we have used the Weisfeiler--Leman algorithm to generalize Chassaniol's result to all finite graphs and further to give a new proof of Sabidussi's theorem for finite graphs. Moreover, we characterize the quantum automorphism groups of quantum vertex transitive graphs in the case where Sabidussi's conditions do not apply.

\bibliographystyle{plain}
\bibliography{Bibliography.bib}

\end{document}